  \theoremstyle{definition}
  \newtheorem{defn}{Definition}[section]
  \theoremstyle{remark}
   \newtheorem{remark}[defn]{Remark}
   \theoremstyle{plain}
  \newtheorem{theorem}[defn]{Theorem}
  \newtheorem{lemma}[defn]{Lemma}
  \newtheorem{corollary}[defn]{Corollary}
    \newtheorem{example}[defn]{Example}
    \newcommand{\iso}{\overset{\sim}{\rightarrow}}
\begin{document}

\def\AW[#1]^#2_#3{\ar@{-^>}@<.5ex>[#1]^{#2} \ar@{_<-}@<-.5ex>[#1]_{#3}}
 \def\NAW[#1]{\ar@{-^>}@<.5ex>[#1] \ar@{_<-}@<-.5ex>[#1]}

 \renewcommand{\arraystretch}{1}
 \renewcommand{\O}{\bigcirc}
 \newcommand{\OX}{\bigotimes}
 \newcommand{\OD}{\bigodot}
 \newcommand{\OV}{\O\llap{v\hspace{.6ex}}}
 \newcommand{\B}{\mbox{\Huge $\bullet$}}
  \newcommand{\D}{$\diamond$}

\title{Good gradings of basic Lie superalgebras}
\author{Crystal Hoyt\footnote{Department of Mathematics, Bar-Ilan University, Ramat Gan, 52900, Israel;
 hoyt@math.biu.ac.il.} \footnote{Supported by JSPS at Nara Women's University, Japan, and ISF at the Weizmann Institute of Science, Israel. Supported by the Minerva foundation with funding from the Federal German Ministry for Education and Research. }}

\date{}
\maketitle

\begin{abstract}
We classify good $\mathbb{Z}$-gradings of basic Lie superalgebras over an algebraically closed field $\mathbb{F}$ of characteristic zero. Good $\mathbb{Z}$-gradings are used in quantum Hamiltonian reduction for affine Lie superalgebras, where they play a role in the construction of super $W$-algebras. We also describe the centralizer of a nilpotent even element and of an $\mathfrak{sl}_{2}$-triple in $\mathfrak{gl}(m|n)$ and $\mathfrak{osp}(m|2n)$.
\end{abstract}

\setcounter{section}{-1}
\section{Introduction}\label{section0}

Good $\mathbb{Z}$-gradings of basic Lie superalgebras are used in the construction of super $W$-algebras, both finite and affine \cite{DK}.  In this paper, we classify good $\mathbb{Z}$-gradings of basic Lie superalgebras.  A finite-dimensional simple Lie superalgebra $\mathfrak{g}=\mathfrak{g}_{\bar{0}} \oplus \mathfrak{g}_{\bar{1}}$ is called basic if $\mathfrak{g}_{\bar{0}}$ is a reductive Lie algebra and there exists an even nondegenerate invariant bilinear form on $\mathfrak{g}$.   A $\mathbb{Z}$-grading $\mathfrak{g}=\oplus_{j\in\mathbb{Z}}\mathfrak{g}(j)$ is called good if there exists $e\in\mathfrak{g}_{\bar{0}}(2)$ such that the map $\mbox{ad }e:\mathfrak{g}(j)\rightarrow\mathfrak{g}(j+2)$ is injective for $j\leq -1$ and surjective for $j\geq -1$.
If a $\mathbb{Z}$-grading of $\mathfrak{g}$ is defined by a semisimple element $h\in\mathfrak{g}_{\bar{0}}$, then this condition is equivalent to all of the eigenvalues of $\mbox{ad}(h)$ on the centralizer $\mathfrak{g}^{e}$ of $e$ in $\mathfrak{g}$ being non-negative.

An example of a good $\mathbb{Z}$-grading for a nilpotent element $e\in\mathfrak{g}_{\bar{0}}$ is the Dynkin grading. By the Jacobson-Morosov Theorem, $e$ belongs to an $\mathfrak{sl}_2$-triple $\mathfrak{s}=\{e,f,h\}\subset\mathfrak{g}_{\bar{0}}$, where $[e,f]=h$, $[h,e]=2e$ and $[h,f]=-2f$. By $\mathfrak{sl}_2$ theory, the grading of $\mathfrak{g}$ defined by $\mbox{ad }h$ is a good $\mathbb{Z}$-grading for $e$.

Affine $W$-algebras are vertex algebras which can be realized using the homology of a BRST complex of a simple finite-dimensional Lie superalgebra $\mathfrak{g}$ with a non-degenerate even supersymmetric
invariant bilinear form. If $x$ is an ad-diagonalizable element of $\mathfrak{g}$ with half integer eigenvalues and if $f$ is an even nilpotent element of $\mathfrak{g}$ such that $[x,f]=-f$ and the eigenvalues of $\mbox{ad}(x)$ on the centralizer $\mathfrak{g}^{f}$ of $f$ in $\mathfrak{g}$ are all non-positive, then for each complex number $k$, one can define a vertex algebra $W^{k}(\mathfrak{g},x,f)$, as was shown by Kac, Roan and Wakimoto in 2003 \cite{KRW}.

The minimal W-algebras $W^{k}(\mathfrak{g},x,f_{\theta})$, where $f_{\theta}$ is a root vector of the lowest root $\theta$ (which is assumed to be even), have been studied more extensively \cite{KW04, KRW}.  This class of $W$-algebras contains the well known superconformal algebras.
Let $\widehat{\mathfrak{g}}$ be the (non-twisted) affinization of $\mathfrak{g}$ and let $O_k$ be the BGG-category of $\widehat{\mathfrak{g}}$ at level $k$.  A functor $H$ from the category $O_k$ to the category of integer graded modules of $W^{k}(\mathfrak{g},x,f_{\theta})$ was given by Kac, Roan and Wakimoto in \cite{KRW}. The quantum reduction functor has many nice properties, allowing one to transfer information between the two categories of modules. In particular, in 2005, Arakawa proved that this functor is exact and that the image of a simple highest weight module is either zero or irreducible \cite{A}.

A finite $W$-algebra is defined as follows \cite{Kos2, Prem, GG}.  Given a good $\mathbb{Z}$-grading $\mathfrak{g}=\oplus_{j\in\mathbb{Z}}\mathfrak{g}(j)$ for a nilpotent element $e\in\mathfrak{g}(2)$, choose an isotropic subspace $\mathfrak{l}$ of $\mathfrak{g}(-1)$ with respect to the skew-supersymmetric bilinear form defined by $\omega(x,y)= (e,[x,y])$. Let $\mathfrak{m}=\mathfrak{l}\oplus\bigoplus_{j\leq -2}\mathfrak{g}(j)$ and $\mathfrak{n}=\mathfrak{l}^{\bot}\oplus\bigoplus_{j\leq -2}\mathfrak{g}(j)$, where $\mathfrak{l}^{\bot}$ is the complement of $\mathfrak{l}$ with respect to $\omega$.  Define $\chi :\mathfrak{m}\rightarrow\mathbb{C}$ by $\chi(x)=(x,e)$, and let $\mathcal{Q}=U(\mathfrak{g})\otimes_{U(\mathfrak{m})}\mathbb{C}_{\chi}$.
The (super) {\em finite $W$-algebra} associated to $e$ for this choice of grading and isotropic subspace $\mathfrak{l}$ is defined to be $W^{fin}(\mathfrak{g},e)=\mathcal{Q}^{\mathrm{ad}\ n}$.  The algebra structure of $W^{fin}(\mathfrak{g},e)$ is induced from that of $U(\mathfrak{g})$.

Good $\mathbb{Z}$-gradings of simple finite-dimensional Lie algebras were classified by A.G. Elashvili and V.G. Kac in \cite{EK}.  K. Baur and N. Wallach classified nice parabolic subalgebras of reductive Lie algebras in \cite{BW}, which correspond to good even $\mathbb{Z}$-gradings by \cite[Theorem 2.1]{EK}.  J. Brundan and S. Goodwin classified good $\mathbb{R}$-gradings of semisimple Lie algebras in \cite{BG}, and proved that the isomorphism type of a (non-super) finite $W$-algebra does not depend on the choice of good grading.  W.L. Gan and V. Ginzburg proved that a (non-super) finite $W$-algebra does not depend on the choice of the isotropic subspace $\mathfrak{l}$ \cite{GG}.

The paper is organized as follows. In Section~\ref{section3}, we study $\mathbb{Z}$-gradings of basic Lie superalgebras. We obtain a criterion for when two diagram characteristics determine the same $\mathbb{Z}$-grading by using the action of the Weyl groupoid. In Section~\ref{section2}, we describe explicitly the centralizers of nilpotent even elements and of $\mathfrak{sl}_{2}$-triples in $\mathfrak{gl}(m|n)$ and $\mathfrak{osp}(m|2n)$. In Section~\ref{section4}, we establish some general results for good $\mathbb{Z}$-gradings of basic Lie superalgebras. We also examine the question of extending good $\mathbb{Z}$-gradings from $\mathfrak{g}_{\bar{0}}$ to $\mathfrak{g}$.
In Section~\ref{section5}, we prove that all good $\mathbb{Z}$-gradings of the exceptional Lie superalgebras $F(4)$, $G(3)$, and $D(2,1,\alpha)$ are Dynkin gradings. In Sections \ref{section6a}, \ref{section6} and \ref{section7}, we classify the good $\mathbb{Z}$-gradings of $\mathfrak{psl}(2|2)$,  $\mathfrak{gl}(m|n)$, and $\mathfrak{osp}(m|2n)$, respectively.  In particular, for each nilpotent even element (up to conjugacy) we describe all $\mathbb{Z}$-gradings for which the element is good.

We classify the good $\mathbb{Z}$-gradings of $\mathfrak{gl}(m|n)$ and $\mathfrak{osp}(m|2n)$ using pyramids.
Pyramids were defined in \cite{BG, EK} to describe the good $\mathbb{Z}$-gradings of $\mathfrak{gl}(n)$, $\mathfrak{so}(m)$ and $\mathfrak{sp}(2n)$.  We generalize these definitions to the Lie superalgebras $\mathfrak{gl}(m|n)$ and $\mathfrak{osp}(m|2n)$.
For $\mathfrak{gl}(m|n)$, a symmetric pyramid is defined for each nilpotent even element $e$ essentially by taking an $\mathfrak{sl}_2$-triple $\mathfrak{s}$ containing $e$ and then looking at the $\mathfrak{sl}_2$-strings in the standard representation of $\mathfrak{s}$.  One arranges rows of boxes in the upper half plane such that each row corresponds to an $\mathfrak{sl}_2$-string, the rows have non-increasing length in the positive $y$ direction, and the left coordinate of each box equals the weight of the vector to which it corresponds. Then by $\mathfrak{sl}_2$ theory this pyramid is symmetric about the $y$-axis. Any pyramid for  $\mathfrak{gl}(m|n)$ can be obtained from a symmetric pyramid by shifting the rows horizontally.  For $\mathfrak{osp}(m|2n)$, one adjusts the symmetric pyramid to contain a central symmetry about the origin.

\textbf{Acknowledgement.} I would like to thank Tomoyuki Arakawa for support and guidance; Vera Serganova for helpful conversations; Alexander Elashvili for suggesting good references; Anthony Joseph for thought-provoking discussions; and Maria Gorelik for reading drafts of the paper and suggesting improvements.

\section{Preliminaries}\label{section1}
We begin with recalling the definitions of  $\mathfrak{gl}(m|n)$ and  $\mathfrak{sl}(m|n)$.
Let $M_{r,s}$ denote the set of $r\times s$ matrices.  As a vector space $\mathfrak{gl}(m|n)$ is $M_{m+n,m+n}$, where
\begin{align*}
\mathfrak{g}_{\overline{0}}&=\left\{\left(\begin{array}{cc} A & 0 \\ 0 & B\end{array}\right)\mid A\in M_{m,m},\ B\in M_{n,n}\right\}\\
\mathfrak{g}_{\overline{1}}&=\left\{\left(\begin{array}{cc} 0 & C \\ D & 0\end{array}\right)\mid C\in M_{m,n},\ D\in M_{n,m}\right\}.
\end{align*}

The bracket operation is defined on homogeneous elements as follows:  if $X\in\mathfrak{g}_{i}$, $Y\in\mathfrak{g}_{j}$, then $[X,Y]:=XY-(-1)^{ij}YX,$ and it is extended linearly to the superalgebra.
The Lie superalgebra $\mathfrak{sl}(m|n)$ is defined to be $$\mathfrak{sl}(m|n)=\left\{X=\left(\begin{array}{cc} A & C \\ D & B\end{array}\right)\in\mathfrak{gl}(m|n)\mid \mbox{supertr}(X):=\mbox{tr}(A)-\mbox{tr}(B)=0 \right\}.$$

\subsection{Basic Lie superalgebras}

Finite-dimensional simple Lie superalgebras were classified by V.G. Kac in \cite{K77}.  These can be separated into three types: basic, strange and Cartan.  A finite-dimensional simple Lie superalgebra  $\mathfrak{g} = \mathfrak{g}_{\bar{0}} \oplus\mathfrak{g}_{\bar{1}}$ is called {\em basic} if $\mathfrak{g}_{\bar{0}}$ is a reductive Lie algebra and $\mathfrak{g}$ has an even nondegenerate invariant bilinear form $(\cdot,\cdot)$.  This form is necessarily supersymmetric.  These are: $\mathfrak{sl}(m|n) : m\neq n$, $\mathfrak{psl}(n|n):=\mathfrak{sl}(n|n)/ \langle I_{2n} \rangle$,  $\mathfrak{osp}(m|n)$, $D(2,1,\alpha)$, $F(4)$, $G(3)$, and finite dimensional simple Lie algebras.

\begin{equation*}\label{table1}\doublespacing
\begin{tabular}{ccccc}
& Table 1\\
\hline
$\mathfrak{g}$ &   & $\mathfrak{g}_{\bar{0}}$ & $\mathfrak{Z}(\mathfrak{g}_{\bar{0}})$ & $\kappa$  \\
  \hline
   $\mathfrak{sl}(m|n)$ &  $m,n\geq 1$, $m\neq n$ &  $\mathfrak{sl}(m) \times \mathfrak{sl}(n) \times \mathbb{C}$ & $\mathbb{C}$ &  \\
  $\mathfrak{psl}(n|n)$  & $n\geq 1$ &   $\mathfrak{sl}(n) \times \mathfrak{sl}(n)$ & $\{0\}$  & 0\\
  $\mathfrak{osp}(2|2n)$ & $n\geq 1$  & $\mathbb{C}\times \mathfrak{sp}(2n)$ & $\mathbb{C}$ & \\
   $\mathfrak{osp}(2n+2|2n)$  & $n\geq 1$  & $\mathfrak{so}(2n+2) \times \mathfrak{sp}(2n)$ & $\{0\}$ & 0 \\
   $\mathfrak{osp}(m|2n)$  & $m,n\geq 1$,  $m\neq 2, 2n+2$  & $\mathfrak{so}(m) \times \mathfrak{sp}(2n)$ & $\{0\}$ & \\
  $D(2,1,\alpha)$ & $\alpha\neq 0,-1$  & $\mathfrak{sl}(2) \times \mathfrak{sl}(2) \times \mathfrak{sl}(2)$  & $\{0\}$ & 0\\
  $F(4)$ &   & $\mathfrak{so}(7) \times \mathfrak{sl}(2)$ & $\{0\}$ & \\
  $G(3)$ &   & $G_2 \times \mathfrak{sl}(2)$ & $\{0\}$ & \\
  \hline
\end{tabular}
\end{equation*}

Note that if $\mathfrak{g}$ is a finite-dimensional simple Lie superalgebra, then $\mathfrak{g}_{\bar{0}}$ is a reductive if and only if the representation of $\mathfrak{g}_{\bar{0}}$ on $\mathfrak{g}_{\bar{1}}$ is completely reducible \cite{S}.  The Lie superalgebras $\mathfrak{sl}(m|n)$, $\mathfrak{osp}(m|2n)$, $D(2,1,\alpha)$, $F(4)$ and $G(3)$ are Kac-Moody superalgebras, i.e. they are defined by their Cartan matrix \cite{K77}.

Let $\mathfrak{g}$ be a basic Lie superalgebra. Elements of $\mathfrak{g}_{\bar{0}}$ are called {\em even}, while elements of $\mathfrak{g}_{\bar{1}}$ are called {\em odd}.
We can write $\mathfrak{g}_{\bar{0}} = \mathfrak{g}'_{\bar{0}} \times \mathfrak{Z}(\mathfrak{g}_{\bar{0}})$, where $\mathfrak{Z}(\mathfrak{g}_{\bar{0}})$ is the center of $\mathfrak{g}_{\bar{0}}$ and  $\mathfrak{g}'_{\bar{0}}:=[ \mathfrak{g}_{\bar{0}} , \mathfrak{g}_{\bar{0}} ]$ is semisimple.
If $\mathfrak{g} \neq \mathfrak{psl}(n|n), \mathfrak{osp}(2n+2|2n), D(2,1,\alpha)$ then
the Killing form $\kappa(x,y):=\mbox{supertr}((\mbox{ad }x)(\mbox{ad }y))$ is nondegenerate, and otherwise it is identically zero \cite{K77}.

For each $x\in\mathfrak{g}_{\bar{0}}$ the map $\mbox{exp}(\mbox{ad }x)$  is an automorphism of $\mathfrak{g}$. The group $G$ generated by these automorphisms is called the {\em group of inner automorphisms} of $\mathfrak{g}$.
Every inner automorphism of $\mathfrak{g}_{\bar{0}}$ extends to an inner automorphism of $\mathfrak{g}$ \cite{K77}.

\subsection{Decompositions of $\mathfrak{g}$}

A subalgebra $\mathfrak{h}\subset\mathfrak{g}$ is called a {\em Cartan subalgebra} if $\mathfrak{h}$ is nilpotent and $\mathfrak{h}$ equals its normalizer in $\mathfrak{g}$.  If $\mathfrak{g}$ is a basic Lie superalgebra, then $\mathfrak{h}$ is a Cartan subalgebra for $\mathfrak{g}$ if and only if it is a Cartan subalgebra for $\mathfrak{g}_{\bar{0}}$.
All Cartan subalgebras of $\mathfrak{g}$ are conjugate, because they are conjugate in the reductive Lie algebra $\mathfrak{g}_{\bar{0}}$.

Fix a Cartan subalgebra $\mathfrak{h}$. For $\alpha\in\mathfrak{h}^{*}$, let $\mathfrak{g}_{\alpha}:=\{x\in\mathfrak{g} \mid [h,x]=\alpha(h)x\ \text{ for all } h\in\mathfrak{h}\}$ and let $\Delta=\{\alpha\in\mathfrak{h}^{*}\setminus \{0\} \mid \mathfrak{g}_{\alpha}\neq 0\}$.  Then $\mathfrak{g}_{0}=\mathfrak{h}$ and $\mathfrak{g}$ has a root space decomposition $\mathfrak{g}=\mathfrak{h}\oplus\bigoplus_{\alpha\in\Delta} \mathfrak{g}_{\alpha}$.  The $\mathbb{Z}/2\mathbb{Z}$-grading of $\mathfrak{g}$ determines a decomposition of  $\Delta$ into the disjoint union of the even roots $\Delta_{\bar{0}}$ and the odd roots $\Delta_{\bar{1}}$.  Let $W$ denote the Weyl group of $\Delta_{\bar{0}}$.  Then $\Delta_{\bar{0}}$ and $\Delta_{\bar{1}}$ are invariant under $W$.

An element $h\in\mathfrak{h}$ is called {\em regular} if $\mbox{Re }\alpha(h)\neq 0$ for all $\alpha\in\Delta$.  A regular element $h\in\mathfrak{h}$ determines a decomposition of the roots $\Delta=\Delta^{+}\sqcup\Delta^{-}$ where
$\Delta^{+}:=\{\alpha\in\Delta \mid \mbox{Re } \alpha(h)>0\}$ and $ \Delta^{-}:=\{\alpha\in\Delta \mid \mbox{Re } \alpha(h)<0\}$.  This then determines a decomposition $\mathfrak{g}=\mathfrak{n}^{-}\oplus\mathfrak{h}\oplus\mathfrak{n}^{+}$ where $\mathfrak{n}^{+}:=\oplus_{\alpha\in\Delta^{+}}\mathfrak{g}_{\alpha}$ and
$\mathfrak{n}^{-}:=\oplus_{\alpha\in\Delta^{-}}\mathfrak{g}_{\alpha}$.
Such a decomposition is called a {\em triangular decomposition} \cite{PS94}.  We have an induced triangular decomposition $\mathfrak{g}_{\bar{0}}=\mathfrak{n}_{\bar{0}}^{-}\oplus\mathfrak{h}\oplus\mathfrak{n}_{\bar{0}}^{+}$
given by $\Delta_{\bar{0}}=\Delta^{+}_{\bar{0}}\sqcup\Delta^{-}_{\bar{0}}$. Corresponding to a decomposition $\Delta=\Delta^{+}\sqcup\Delta^{-}$, a {\em base} is a set of simple roots $\Pi\subset\Delta^{+}$ (resp. $\Pi_{\bar{0}}\subset\Delta_{\bar{0}}^{+}$) for $\mathfrak{g}$ (resp. $\mathfrak{g}_{\bar{0}})$.

A subalgebra $\mathfrak{b}\subset\mathfrak{g}$ is called a Borel subalgebra if $\mathfrak{b}=\mathfrak{h}\oplus\mathfrak{n}^{+}$ for some triangular decomposition $\mathfrak{g}=\mathfrak{n}^{-}\oplus\mathfrak{h}\oplus\mathfrak{n}^{+}$.
Since $\mathfrak{g}_{\bar{0}}$ is a reductive Lie algebra, the group of inner automorphisms of $\mathfrak{g}_{\bar{0}}$ acts transitively on the set of Borel subalgebras for $\mathfrak{g}_{\bar{0}}$.
Since every inner automorphism of $\mathfrak{g}_{\bar{0}}$ extends to an inner automorphism of $\mathfrak{g}$, the Borel subalgebras of $\mathfrak{g}_{\bar{0}}$ are conjugate in $\mathfrak{g}$.

\subsection{The bilinear form}\label{theform}
Let $\mathfrak{g}$ be a basic Lie superalgebra, and let $(\cdot,\cdot)$ be a nondegenerate invariant even supersymmetric bilinear form on $\mathfrak{g}$. Such a form is unique up to multiplication by a scalar \cite{K77}. There is an invariant even supersymmetric bilinear form on $\mathfrak{gl}(m|n)$, which when restricted to $\mathfrak{sl}(m|n)$ has kernel equal to the center of $\mathfrak{sl}(m|n)$ \cite{K77}. We will also denote this form by $(\cdot,\cdot)$.  Let $\mathfrak{h}\subset\mathfrak{g}$ be a Cartan subalgebra, and let $\Delta$ be the set of roots.

\begin{theorem}[V.G. Kac \cite{K77}]\label{classical}
If $\mathfrak{g}$ is a basic Lie superalgebra or if $\mathfrak{g}=\mathfrak{gl}(m|n)$ or $\mathfrak{sl}(n|n)$, then
\begin{enumerate}[(i)]
   \item $(\mathfrak{g}_{\alpha},\mathfrak{g}_{\beta})=0$ if $\alpha\neq -\beta$ for $\alpha,\beta\in\Delta\cup\{0\}$;
   \item  $(\cdot,\cdot)$ determines a nondegenerate pairing of $\mathfrak{g}_{\alpha}$ with $\mathfrak{g}_{-\alpha}$ for $\alpha\in\Delta$;
   \item  if $\mathfrak{g}\neq\mathfrak{gl}(m|n),\mathfrak{sl}(n|n)$, then the restriction of $(\cdot,\cdot)$ to $\mathfrak{h}$ is nondegenerate;
\item  if $\mathfrak{g}=\mathfrak{sl}(n|n)$, then the kernel of $(\cdot,\cdot)$ equals the center of $\mathfrak{g}$;
    \item  $[\mathfrak{g}_{\alpha},\mathfrak{g}_{\beta}]\neq 0$ if and only if $\alpha,\beta,\alpha + \beta\in\Delta\cup\{0\}$;
\item  if $\mathfrak{g}\neq\mathfrak{psl}(2|2)$, then $\mbox{dim }\mathfrak{g}_{\alpha}=1$ for all $\alpha\in\Delta$.
 \end{enumerate}
\end{theorem}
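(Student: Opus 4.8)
The plan is to extract parts (i)--(iv) from the single input that the form is invariant, supersymmetric and (except for $\mathfrak{gl}(m|n)$ and $\mathfrak{sl}(n|n)$) nondegenerate, to obtain (v) by combining the grading of the bracket with the perfect pairing of (ii), and to settle (vi) by inspecting the explicit realizations type by type. I would prove (i) first: for homogeneous $x\in\mathfrak{g}_{\alpha}$, $y\in\mathfrak{g}_{\beta}$ and any $h\in\mathfrak{h}$, invariance of $(\cdot,\cdot)$ gives the $\mathrm{ad}$-invariance identity $([h,x],y)=-(x,[h,y])$, whence $(\alpha(h)+\beta(h))(x,y)=0$. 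If $\alpha+\beta\neq 0$ I may choose $h$ with $(\alpha+\beta)(h)\neq 0$, forcing $(x,y)=0$; this is (i). For (ii), fix $\alpha\in\Delta$ and suppose $x\in\mathfrak{g}_{\alpha}$ is orthogonal to all of $\mathfrak{g}_{-\alpha}$. By (i) it is then orthogonal to every $\mathfrak{g}_{\beta}$, hence lies in the radical of $(\cdot,\cdot)$. When $\mathfrak{g}$ is basic the radical is zero, so $x=0$ and the induced pairing $\mathfrak{g}_{\alpha}\times\mathfrak{g}_{-\alpha}\to\mathbb{F}$ is perfect; the same follows for $\mathfrak{gl}(m|n)$ and $\mathfrak{sl}(n|n)$ once I know their radical is central, since a central element has weight $0$ and cannot lie in $\mathfrak{g}_{\alpha}$ for $\alpha\neq 0$.

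Part (iii) is then immediate: since $\mathfrak{h}=\mathfrak{g}_{0}$ is orthogonal to every root space, any vector in the radical of $(\cdot,\cdot)|_{\mathfrak{h}}$ lies in the radical of $(\cdot,\cdot)$ on $\mathfrak{g}$, which vanishes precisely when $\mathfrak{g}\neq\mathfrak{gl}(m|n),\mathfrak{sl}(n|n)$. For (iv) I would use the concrete supertrace form $(X,Y)=\mathrm{str}(XY)$ inherited from $\mathfrak{gl}(n|n)$: the center of $\mathfrak{sl}(n|n)$ is spanned by the identity $I$, and $(I,Y)=\mathrm{str}(Y)=0$ for all $Y\in\mathfrak{sl}(n|n)$, so the center is contained in the radical. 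The reverse inclusion is the graded argument again -- the radical meets each $\mathfrak{g}_{\alpha}$ ($\alpha\neq 0$) trivially by the pairing, and a direct computation of the diagonal form shows that its radical inside $\mathfrak{h}$ is exactly $\langle I\rangle$.

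For (v), the forward implication is forced by the grading, since $[\mathfrak{g}_{\alpha},\mathfrak{g}_{\beta}]\subseteq\mathfrak{g}_{\alpha+\beta}$ can be nonzero only if $\alpha,\beta\in\Delta\cup\{0\}$ and $\alpha+\beta\in\Delta\cup\{0\}$. For the converse, set $\gamma=\alpha+\beta$ and pick $u\in\mathfrak{g}_{-\gamma}$. Invariance gives $([x,y],u)=(x,[y,u])$ with $[y,u]\in\mathfrak{g}_{-\alpha}$; hence if $[\mathfrak{g}_{\alpha},\mathfrak{g}_{\beta}]=0$ then $(x,[y,u])=0$ for all $x$, and (ii) forces $[\mathfrak{g}_{\beta},\mathfrak{g}_{-\gamma}]=0$. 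Running this cyclically reduces the claim to showing that a triple of weights summing to zero cannot have all pairwise brackets vanish. The degenerate cases (some weight $0$, or $\gamma=0$) I dispatch directly: for $\beta=-\alpha$, choosing $y\in\mathfrak{g}_{-\alpha}$ with $(x,y)\neq 0$ gives $(h,[x,y])=\alpha(h)(x,y)$, so $[x,y]\neq 0$. The genuinely non-formal case, where $\alpha,\beta,\gamma$ are all nonzero roots, I would finish using $\mathfrak{sl}_{2}$-strings for even roots and the explicit root-vector structure for odd roots. This is the main obstacle, since the pairing argument alone is cyclic and does not by itself rule out the simultaneous vanishing.

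Finally, (vi) is a case-by-case inspection of Table~1. In the matrix algebras $\mathfrak{sl}(m|n)$, $\mathfrak{gl}(m|n)$ and $\mathfrak{osp}(m|2n)$ the root vectors are (sums of) elementary matrices and one reads off $\dim\mathfrak{g}_{\alpha}=1$ directly; for the Kac--Moody exceptionals $D(2,1,\alpha)$, $F(4)$ and $G(3)$ all root multiplicities equal one. The sole exception appears for $\mathfrak{sl}(n|n)$ with $n=2$: there the trace relation $\epsilon_{1}+\epsilon_{2}=\delta_{1}+\delta_{2}$ identifies the weights of two distinct odd elementary matrices, producing a two-dimensional odd root space, which descends to $\mathfrak{psl}(2|2)$. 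Tracking exactly where this collapse occurs, and checking that it does not for $n\geq 3$, is the only delicate bookkeeping in this part.
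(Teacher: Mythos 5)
The paper does not prove this statement: it is quoted verbatim from Kac \cite{K77} as known structure theory, so there is no in-paper argument to compare against. Judged on its own terms, your parts (i)--(iv) are correct and standard (the weight argument for (i), the "radical meets no nonzero root space" argument for (ii) and (iv), and (iii) as a corollary), and (vi) is an acceptable inspection, including the correct identification of the collapse $\epsilon_1-\delta_1=\delta_2-\epsilon_2$ forced by the trace relation when $n=2$. One small caveat on (vi): that collapse already happens for $\mathfrak{sl}(2|2)$ with its own Cartan, not only after passing to $\mathfrak{psl}(2|2)$, so strictly speaking the statement as written implicitly treats $\mathfrak{sl}(n|n)$-roots via the $\mathfrak{gl}(n|n)$ Cartan; worth a sentence.

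The genuine gap is the converse of (v) when $\alpha$, $\beta$ and $\gamma=\alpha+\beta$ are all nonzero roots. You correctly observe that the invariance identity $([x,y],u)=(x,[y,u])$ together with (ii) only shows that vanishing of one of the three brackets $[\mathfrak{g}_{\alpha},\mathfrak{g}_{\beta}]$, $[\mathfrak{g}_{\beta},\mathfrak{g}_{-\gamma}]$, $[\mathfrak{g}_{-\gamma},\mathfrak{g}_{\alpha}]$ forces vanishing of all three, and you leave the actual non-vanishing to unspecified "$\mathfrak{sl}_2$-strings and explicit root vectors." The missing idea that closes the loop is a parity count: since the sum of two odd roots is even, the triple $\{\alpha,\beta,-\gamma\}$ (which sums to zero) always contains at least one even root, and even roots of a basic Lie superalgebra are non-isotropic. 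Taking the $\mathfrak{sl}_2$-triple attached to that even root, the corresponding root string through one of the other two roots is an $\mathfrak{sl}_2$-module in which that root is not extremal (because the third root of the triple is also a root), so the relevant raising or lowering operator does not annihilate its root space; this makes one of the three cyclic brackets nonzero, and your contrapositive then gives the one you want. Without this observation the plan is not a proof, because for a pair of isotropic odd roots no $\mathfrak{sl}_2$-string is available and "explicit root-vector structure" amounts to an unexecuted case check over all the classical and exceptional types. (One should also flag $\mathfrak{psl}(2|2)$ separately here, since its two-dimensional root spaces break the irreducibility of the string module; the paper's Lemma 2.5 effectively handles that case by direct computation.)
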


If we fix $(\cdot,\cdot)$, then we can use $(\cdot,\cdot)$ to identify $\mathfrak{h}$ with $\mathfrak{h}^{\star}$. Then $(\cdot,\cdot)$ is defined on $\Delta\subset\mathfrak{h}^{\star}$ through this identification.
A root $\alpha\in\Delta$ is called {\em isotropic} if $(\alpha,\alpha)=0$.  For a basic Lie superalgebra, a simple isotropic root is necessarily odd.

\subsection{Even and odd reflections}\label{reflections}
We recall the notion of odd reflections for basic Lie superalgebras \cite{LSS86}.

Two Borel subsuperalgebras $\mathfrak{b},\mathfrak{b}'\subset \mathfrak{g}$ are connected by an {\em odd reflection} along $\alpha_k$ if and only if $\alpha_k$ is a simple odd isotropic root of $\mathfrak{b}$ and
\begin{equation}\label{eqnodd} \Delta'^{+}=\left(\Delta^{+}\setminus\{\alpha_k\}\right)\cup\{-\alpha_k\}. \end{equation}
For the bases $\Pi\subset\Delta^{+}$ and $\Pi'\subset\Delta'^{+}$, we say that $\Pi'$ is obtained from $\Pi$ by an odd reflection with respect to $\alpha_k$.  This is defined explicitly on $\Pi$ by
\begin{equation}\label{oddrefl}
 r_{k}(\alpha_{i}):=  \left\{
  \begin{array}{ll}
    -\alpha_{k}, & \hbox{if $i=k$;} \\
    \alpha_{i}, & \hbox{if $a_{ik}=a_{ki}=0$, $i \neq k$;} \\
    \alpha_{i}+\alpha_{k}, & \hbox{if $a_{ik}\neq 0$ or $a_{ki} \neq 0$,  $i \neq k$;}
  \end{array}\hspace{1cm}\alpha_i\in\Pi.
\right.\end{equation}

If $\alpha_k\in\Pi$ is non-isotropic, then we define the (even) reflection $r_{k}$ at $\alpha_k$ by
\begin{equation}\label{evenrefl}
r_{k}(\alpha)=\beta - \frac{2(\beta,\alpha_k)}{(\alpha_k,\alpha_k)}\alpha_k\hspace{1cm}\beta\in\Delta.\end{equation}
If $\alpha_k\in\Pi$ is an even root, then $r_{k}$ also satisfies (\ref{eqnodd}).  However, if $\alpha_k\in\Pi$ is a non-isotropic odd root, then
\begin{equation}\label{eqneven}\Delta'^{+}=\left(\Delta^{+}\setminus\{\alpha_k,2\alpha_k\}\right)\cup\{-\alpha_k,-2\alpha_k\}.\end{equation}

\subsection{The Weyl groupoid}\label{groupoid}

The {\em Weyl groupoid} $\mathcal{W}$ for a basic Lie superalgebra $\mathfrak{g}$ is a groupoid which acts by even and odd reflections on the set of bases of $\mathfrak{g}$ \cite{S08}.  For each base $\Pi=\{\alpha_1,\ldots,\alpha_n\}$ and each simple root $\alpha_k\in\Pi$ the set $r_k(\Pi)\subset\Delta$ defined by $r_k(\Pi)=\{r_{k}(\alpha_{n}),\ldots,r_{k}(\alpha_{n})\}$ is a base for $\Delta$ \cite{S08}.  The Weyl groupoid acts transitively on the set of bases of a basic Lie superalgebra.  Indeed, if we have two different decompositions $\Delta=\Delta^{+}\sqcup\Delta^{-}$ and $\Delta=\Delta''^{+}\sqcup\Delta''^{-}$, then there is a simple root $\alpha_k\in\Delta^{+}\setminus\Delta''^{+}$.  Let $\Delta'^{+}$ be obtained from $\Delta^{+}$ by the simple reflection $r_k$.  Then by (\ref{eqnodd}) and (\ref{eqneven}), $|\Delta'^{+}\setminus\Delta''^{+}| < |\Delta^{+}\setminus\Delta''^{+}|$, so the claim follows by induction.  If $\Delta_{\bar{0}}^{+}=\Delta_{\bar{0}}''^{+}$, then $\Delta^{+}\setminus\Delta''^{+}$ consists entirely of odd roots. So any two Borel subalgebras $\mathfrak{b},\mathfrak{b}'\subset\mathfrak{g}$ satisfying $\mathfrak{b}_{\bar{0}}=\mathfrak{b}'_{\bar{0}}$ are connected by a chain of odd reflections.  In particular, one can use odd reflections to move between the different Dynkin diagrams of a basic Lie superalgebra.

The Weyl groupoid also acts on the set of all ``marked bases'' of $\mathfrak{g}$.  A {\em marked base} $(\Pi,D)$ is a base $\Pi=\{\alpha_1,\ldots,\alpha_n\}$ together with an assignment of integers $D=\{d_1,\ldots,d_n\}$. Given a marked base $(\Pi,D)$, we can extend $D$ linearly to a map $D:\Delta\cup\{0\}\rightarrow\mathbb{Z}$ by $D(\beta)=D(\sum_{i=1}^{n}k_i\alpha_i)=\sum_{i=1}^{n}k_i d_i$.  For each simple root $\alpha_k\in\Pi$, we can reflect at $\alpha_k$ to obtain a marked base $(r_k(\Pi),r_k(D))$, where $r_k(D)=\{D(r_k(\alpha_1)),\ldots,D(r_k(\alpha_n))\}$. For each $i=1,\ldots,n$, $D(r_k(\alpha_i))$ can be easily computed from the definition of $r_k$ using linearity.
If $D(\alpha_k)=0$, then $D(r_k(\alpha_i))=D(\alpha_i)$ for $i=1,\ldots,n$.
Similarly, $\mathcal{W}$ acts on the set of all ``marked diagrams'' of $\mathfrak{g}$.
A {\em marked diagram} is obtained by assigning an integer to each vertex of a Dynkin diagram of $\mathfrak{g}$.

\section{Properties of $\mathbb{Z}$-gradings}\label{section3}

A $\mathbb{Z}$-grading of a Lie superalgebra $\mathfrak{g}$ is a decomposition
into a direct sum of $\mathbb{Z}/2\mathbb{Z}$-graded subspaces $ \mathfrak{g}=\oplus_{j\in\mathbb{Z}} \mathfrak{g}(j)$ such that $[\mathfrak{g}(i),\mathfrak{g}(j)]\subset\mathfrak{g}(i+j)$.
Let $$\mathfrak{g}_{\geq}=\oplus_{j\geq 0} \mathfrak{g}(j),\ \ \mathfrak{g}_{\leq}=\oplus_{j\leq 0} \mathfrak{g}(j),\ \ \mathfrak{g}_{+}=\oplus_{j>0} \mathfrak{g}(j),\ \text{ and }\ \mathfrak{g}_{-}=\oplus_{j<0} \mathfrak{g}(j).$$

If $\mathfrak{a}$ is a subalgebra of a $\mathbb{Z}$-graded Lie superalgebra $\mathfrak{g}=\oplus_{j\in\mathbb{Z}} \mathfrak{g}(j)$, we define $\mathfrak{a}(k):=\mathfrak{a}\cap\mathfrak{g}(k)$.
Then $\oplus_{k\in\mathbb{Z}}\mathfrak{a}(k)$ is a subalgebra of $\mathfrak{a}$. We say that that $\mathfrak{a}$ is a {\em graded subalgebra of $\mathfrak{g}$} if $\mathfrak{a}=\oplus_{k\in\mathbb{Z}}\mathfrak{a}(k)$, and call this grading the {\em induced grading}.
Clearly, $\mathfrak{g}_{\bar{0}}$ is a graded subalgebra of $\mathfrak{g}$.
One can show that the derived subalgebra $\mathfrak{g}'$ and the center $\mathfrak{Z}(\mathfrak{g})$ are also graded subalgebras of $\mathfrak{g}$.  Moreover the {\em centralizer of $T$ in $\mathfrak{g}$}, defined by  $\mathfrak{g}^{T}:=\{x\in\mathfrak{g}\mid [x,t]=0\ \forall t\in T\}$, is a graded subalgebra of $\mathfrak{g}$ when $T$ is spanned by set of homogeneous elements.

\subsection{Cartan subalgebras and the root space decomposition}

\begin{lemma}\label{CSA}
Let $\mathfrak{g}$ be a basic Lie superalgebra, or let $\mathfrak{g}$ be $\mathfrak{gl}(m|n)$ or $\mathfrak{sl}(n|n)$.  Let  $\mathfrak{g}=\oplus_{j\in\mathbb{Z}} \mathfrak{g}(j)$ be a $\mathbb{Z}$-grading of $\mathfrak{g}$ satisfying $\mathfrak{Z}(\mathfrak{g}_{\bar{0}})\subset \mathfrak{g}_{\bar{0}}(0)$.  Then
\begin{enumerate}[(i)]
  \item $\mathfrak{g}'_{\bar{0}}(0)$ is a reductive Lie algebra and so is $\mathfrak{g}_{\bar{0}}(0)=\mathfrak{g}'_{\bar{0}}(0) \times \mathfrak{Z}(\mathfrak{g}_{\bar{0}})$;
  \item there exists a Cartan subalgebra $\mathfrak{h}$ for $\mathfrak{g}$ such that $\mathfrak{h}\subset\mathfrak{g}_{\bar{0}}(0)$.
\end{enumerate}
\end{lemma}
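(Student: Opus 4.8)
The plan is to realize the \emph{entire} grading of $\mathfrak{g}_{\bar{0}}$ as the eigenspace decomposition of a single semisimple element, and then to reduce both parts to the standard structure theory of centralizers in reductive Lie algebras in characteristic zero. First I would restrict the given $\mathbb{Z}$-grading to the subalgebra $\mathfrak{g}'_{\bar{0}}=[\mathfrak{g}_{\bar{0}},\mathfrak{g}_{\bar{0}}]$, which is a graded subalgebra of $\mathfrak{g}_{\bar{0}}$ (the bracket respects the grading) and is semisimple. Since every derivation of a semisimple Lie algebra is inner, the grading derivation of $\mathfrak{g}'_{\bar{0}}$, which acts on $\mathfrak{g}'_{\bar{0}}(j)$ by multiplication by $j$, has the form $\operatorname{ad} h_0$ for a unique $h_0\in\mathfrak{g}'_{\bar{0}}$ (uniqueness because $\operatorname{ad}$ is injective on a semisimple algebra). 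As $\operatorname{ad} h_0$ is diagonalizable with integer eigenvalues, $h_0$ is semisimple and $\mathfrak{g}'_{\bar{0}}(j)=\{x\in\mathfrak{g}'_{\bar{0}}\mid [h_0,x]=jx\}$.

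Next I would promote $h_0$ to a grading element for all of $\mathfrak{g}_{\bar{0}}$. Using $\mathfrak{g}_{\bar{0}}=\mathfrak{g}'_{\bar{0}}\times\mathfrak{Z}(\mathfrak{g}_{\bar{0}})$ together with the hypothesis $\mathfrak{Z}(\mathfrak{g}_{\bar{0}})\subset\mathfrak{g}_{\bar{0}}(0)$, one checks that $\operatorname{ad} h_0$ annihilates the center and reproduces the grading on the semisimple part, so that $\mathfrak{g}_{\bar{0}}(j)=\{x\in\mathfrak{g}_{\bar{0}}\mid [h_0,x]=jx\}$ for every $j$; in particular $\mathfrak{g}_{\bar{0}}(0)=\mathfrak{g}_{\bar{0}}^{h_0}$ and $\mathfrak{g}'_{\bar{0}}(0)=(\mathfrak{g}'_{\bar{0}})^{h_0}$. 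For part (i), I would then invoke the classical fact that the centralizer of a semisimple element in a semisimple (hence in a reductive) Lie algebra is again reductive: this shows $\mathfrak{g}'_{\bar{0}}(0)$ is reductive, and therefore $\mathfrak{g}_{\bar{0}}(0)=\mathfrak{g}'_{\bar{0}}(0)\times\mathfrak{Z}(\mathfrak{g}_{\bar{0}})$ is reductive, being the product of a reductive algebra with an abelian one. The splitting $\mathfrak{g}_{\bar{0}}(0)=\mathfrak{g}'_{\bar{0}}(0)\times\mathfrak{Z}(\mathfrak{g}_{\bar{0}})$ itself is obtained by taking degree-zero parts in $\mathfrak{g}_{\bar{0}}=\mathfrak{g}'_{\bar{0}}\times\mathfrak{Z}(\mathfrak{g}_{\bar{0}})$ and using that the center lies entirely in degree $0$.

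For part (ii), I would choose a maximal toral subalgebra $\mathfrak{h}$ of the reductive Lie algebra $\mathfrak{g}_{\bar{0}}$ containing the semisimple element $h_0$, which is possible since any semisimple element of a reductive Lie algebra lies in some Cartan subalgebra. Because $\mathfrak{h}$ is abelian and $h_0\in\mathfrak{h}$, every element of $\mathfrak{h}$ commutes with $h_0$, whence $\mathfrak{h}\subset\mathfrak{g}_{\bar{0}}^{h_0}=\mathfrak{g}_{\bar{0}}(0)$. By the characterization of Cartan subalgebras recalled earlier, namely that a subalgebra is a Cartan subalgebra of $\mathfrak{g}$ if and only if it is one of $\mathfrak{g}_{\bar{0}}$ (which holds equally for $\mathfrak{gl}(m|n)$ and $\mathfrak{sl}(n|n)$), this $\mathfrak{h}$ is the desired Cartan subalgebra of $\mathfrak{g}$ contained in $\mathfrak{g}_{\bar{0}}(0)$.

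The lemma is largely a careful assembly of standard facts, so the step I would be most attentive to is the passage from $\mathfrak{g}'_{\bar{0}}$ to $\mathfrak{g}_{\bar{0}}$: the hypothesis $\mathfrak{Z}(\mathfrak{g}_{\bar{0}})\subset\mathfrak{g}_{\bar{0}}(0)$ must be used in an essential way to guarantee that the single element $h_0$ defines the \emph{full} grading. Without it the center could be spread across several degrees, no single semisimple grading element would exist, and $\mathfrak{g}_{\bar{0}}(0)$ need not be a centralizer of the required form. Everything else reduces to the structure theory of reductive Lie algebras, which applies uniformly to the basic cases and to $\mathfrak{gl}(m|n)$ and $\mathfrak{sl}(n|n)$, since in all of these $\mathfrak{g}_{\bar{0}}$ is reductive.
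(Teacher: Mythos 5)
Your proposal is correct and follows essentially the same route as the paper: produce a grading element $h_0\in\mathfrak{g}'_{\bar{0}}$ from the inner-derivation property of the semisimple part, use the hypothesis $\mathfrak{Z}(\mathfrak{g}_{\bar{0}})\subset\mathfrak{g}_{\bar{0}}(0)$ to see that $h_0$ defines the full grading of $\mathfrak{g}_{\bar{0}}$, identify $\mathfrak{g}_{\bar{0}}(0)$ with the (reductive) centralizer of the semisimple element $h_0$, and take a Cartan subalgebra containing $h_0$, which then lies in $\mathfrak{g}_{\bar{0}}(0)$ and is a Cartan subalgebra of $\mathfrak{g}$ by the super-to-even correspondence of Cartan subalgebras. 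Your closing remark about where the hypothesis on the center is essential matches the role it plays in the paper's argument.
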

\begin{proof}
Now $\mathfrak{g}_{\bar{0}}=\mathfrak{g}'_{\bar{0}} \times \mathfrak{Z}(\mathfrak{g}_{\bar{0}})$ is reductive,
and $\mathfrak{g}_{\bar{0}}(0)=\mathfrak{g}'_{\bar{0}}(0) \times \mathfrak{Z}(\mathfrak{g}_{\bar{0}})$.
Since $\mathfrak{g}'_{\bar{0}}$ is a semisimple Lie algebra, there exists $H\in\mathfrak{g}'_{\bar{0}}(0)$ which defines the induced grading of  $\mathfrak{g}'_{\bar{0}}$.  Now $[H,\mathfrak{Z}(\mathfrak{g}_{\bar{0}})]=0$ and by assumption $\mathfrak{Z}(\mathfrak{g}_{\bar{0}})\subset\mathfrak{g}_{\bar{0}}(0)$, hence $H$ defines the induced grading of $\mathfrak{g}_{\bar{0}}$.

It is well known that if $\mathfrak{a}=\oplus_{j\in\mathbb{Z}} \mathfrak{a}(j)$ is a $\mathbb{Z}$-grading of a semisimple Lie algebra and $h\in\mathfrak{a}$ defines the grading, then $\mathfrak{a}(0)=\mathfrak{a}^{h}$ is a reductive Lie algebra. In particular, $\mathfrak{g}'_{\bar{0}}(0)=(\mathfrak{g}'_{\bar{0}})^{H}$ is a reductive Lie algebra.
Moreover, $\mathfrak{g}_{\bar{0}}(0)=\mathfrak{g}'_{\bar{0}}(0)\times\mathfrak{Z}(\mathfrak{g}_{\bar{0}})$ is a reductive Lie algebra.

Since $H$ is semisimple in $\mathfrak{g}_{\bar{0}}$ we can choose a Cartan subalgebra $\mathfrak{h}$ in $\mathfrak{g}_{\bar{0}}(0)$ which contains $H$.  Then $\mathfrak{h}$ is a Cartan subalgebra for $\mathfrak{g}_{\bar{0}}$, and so by \cite{K77}  $\mathfrak{h}$ is a Cartan subalgebra for $\mathfrak{g}$.
\end{proof}

\begin{remark}
If $\mathfrak{g}$ is a basic Lie superalgebra with a $\mathbb{Z}$-grading $\mathfrak{g}=\oplus_{j\in\mathbb{Z}} \mathfrak{g}(j)$, then $\mathfrak{Z}(\mathfrak{g}_{\bar{0}})\subset \mathfrak{g}(0)$.
Indeed, elements of  $\mathfrak{Z}(\mathfrak{g}_{\bar{0}})$ are ad-semisimple on $\mathfrak{g}$ since the action of $\mathfrak{g}_{\bar{0}}$ on $\mathfrak{g}_{\bar{1}}$ is completely reducible, whereas elements of $\mathfrak{g}(j)$ for $j\neq 0$ are ad-nilpotent on $\mathfrak{g}$. The claim follows since $\mathfrak{Z}(\mathfrak{g})=0$.
\end{remark}

\begin{lemma}\label{rootdecomposition}\label{degree}
Let $\mathfrak{g}$ be a basic Lie superalgebra, $\mathfrak{g}\neq\mathfrak{psl}(2|2)$, or let $\mathfrak{g}$ be $\mathfrak{gl}(m|n)$ or $\mathfrak{sl}(n|n)$.  Fix a $\mathbb{Z}$-grading  $\mathfrak{g}=\oplus_{j\in\mathbb{Z}} \mathfrak{g}(j)$ satisfying $\mathfrak{Z}(\mathfrak{g}_{\bar{0}})\subset \mathfrak{g}_{\bar{0}}(0)$.
Fix a Cartan subalgebra $\mathfrak{h}\subset\mathfrak{g}_{\bar{0}}(0)$.  Let $\mathfrak{g}=\mathfrak{h}\oplus\bigoplus_{\alpha\in\Delta}\mathfrak{g}_{\alpha}$ be the corresponding root space decomposition of $\mathfrak{g}$.

\begin{enumerate}[(i)]
\item
 For each $\alpha\in\Delta$, there exist $k\in\mathbb{Z}$ such that $\mathfrak{g}_{\alpha}\subset\mathfrak{g}(k)$. Thus \begin{equation*}\label{deco} \mathfrak{g}(0)=\mathfrak{h}\oplus\bigoplus_{\alpha\in\Delta_{0}} \mathfrak{g}_{\alpha}\ \text{ and }\ \mathfrak{g}(j)=\bigoplus_{\alpha\in\Delta_{j}} \mathfrak{g}_{\alpha}\ \text{ for each }\ j\in\mathbb{Z},\ j\neq 0, \end{equation*}
where $\Delta_{j}=\{\alpha\in\Delta \mid \mathfrak{g}_{\alpha}\subset\mathfrak{g}_j\}.$

\item
Define the degree map $\mbox{Deg}:\Delta\cup\{0\}\rightarrow\mathbb{Z}$ by $\mbox{Deg}(\alpha)= k$ if $\alpha\in\Delta_k$ and $\mbox{Deg}(0)=0$.  Then
$\mbox{Deg}$ is a linear map, $\mbox{Deg}(-\alpha)=-\mbox{Deg}(\alpha)$ for all $\alpha\in\Delta$, and $\Delta_{-j} =-\Delta_{j}$ for all $j\in\mathbb{Z}$.
\end{enumerate}
\end{lemma}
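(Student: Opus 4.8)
The plan is to derive everything from two facts: the containment $\mathfrak{h}\subset\mathfrak{g}(0)$, which forces each graded piece to be stable under $\mbox{ad }\mathfrak{h}$, and the one-dimensionality of the root spaces from Theorem~\ref{classical}(vi) (this is exactly where the hypothesis $\mathfrak{g}\neq\mathfrak{psl}(2|2)$ is used). For part (i), since $\mathfrak{h}$ is even and lies in $\mathfrak{g}_{\bar{0}}(0)\subset\mathfrak{g}(0)$, we have $[\mathfrak{h},\mathfrak{g}(j)]\subset[\mathfrak{g}(0),\mathfrak{g}(j)]\subset\mathfrak{g}(j)$, so every $\mathfrak{g}(j)$ is $\mbox{ad }\mathfrak{h}$-invariant. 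Concretely, I would take $0\neq x\in\mathfrak{g}_{\alpha}$, write $x=\sum_{j}x_{j}$ with $x_{j}\in\mathfrak{g}(j)$, and compare the two graded expressions of $[h,x]=\alpha(h)x$; by directness of $\oplus_{j}\mathfrak{g}(j)$ this gives $[h,x_{j}]=\alpha(h)x_{j}$ for every $j$, so each nonzero $x_{j}$ again lies in $\mathfrak{g}_{\alpha}$. Since $\dim\mathfrak{g}_{\alpha}=1$, only one $x_{j}$ can be nonzero, i.e. $\mathfrak{g}_{\alpha}\subset\mathfrak{g}(k)$ for a single $k$. The stated decompositions of $\mathfrak{g}(0)$ and $\mathfrak{g}(j)$, the well-definedness of $\Delta_{j}$, and that of the map $\mbox{Deg}$ then follow at once.

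For part (ii), I would first record additivity of $\mbox{Deg}$ along roots: if $\alpha,\beta,\alpha+\beta\in\Delta$, then by Theorem~\ref{classical}(v) together with the one-dimensionality of $\mathfrak{g}_{\alpha+\beta}$ we get $[\mathfrak{g}_{\alpha},\mathfrak{g}_{\beta}]=\mathfrak{g}_{\alpha+\beta}\neq 0$, and since $[\mathfrak{g}(i),\mathfrak{g}(j)]\subset\mathfrak{g}(i+j)$ this forces $\mbox{Deg}(\alpha+\beta)=\mbox{Deg}(\alpha)+\mbox{Deg}(\beta)$. To upgrade additivity to linearity, fix a base $\Pi=\{\alpha_{1},\dots,\alpha_{n}\}$ and set $d_{i}:=\mbox{Deg}(\alpha_{i})\in\mathbb{Z}$. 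Every root is a non-negative or non-positive integral combination of the $\alpha_{i}$, and $\mathfrak{n}^{+}$ (resp. $\mathfrak{n}^{-}$) is generated by the simple root vectors, so any positive non-simple root $\alpha$ can be written $\alpha=\alpha_{i}+\beta$ with $\alpha_{i}\in\Pi$ and $\beta=\alpha-\alpha_{i}\in\Delta^{+}$. Inducting on the height of $\alpha$ and applying additivity at each step yields $\mbox{Deg}(\alpha)=\sum_{i}k_{i}d_{i}$ whenever $\alpha=\sum_{i}k_{i}\alpha_{i}$, with the negative roots handled symmetrically. Thus $\mbox{Deg}$ is the restriction to $\Delta\cup\{0\}$ of the linear functional determined by $\alpha_{i}\mapsto d_{i}$.

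Linearity then gives the remaining assertions for free: $\mbox{Deg}(-\alpha)=-\mbox{Deg}(\alpha)$ is immediate, and consequently $\alpha\in\Delta_{j}\iff\mbox{Deg}(\alpha)=j\iff\mbox{Deg}(-\alpha)=-j\iff-\alpha\in\Delta_{-j}$, i.e. $\Delta_{-j}=-\Delta_{j}$. The only genuine obstacle I anticipate is the height-reduction step underlying linearity: one must know that each positive root of height at least two is realized as a sum of a simple root and a positive root by a \emph{nonzero} bracket. This is exactly the statement that the nilpotent subalgebras $\mathfrak{n}^{\pm}$ are generated by the simple root vectors, which holds because the relevant $\mathfrak{g}$ are contragredient (Kac--Moody) Lie superalgebras; for $\mathfrak{gl}(m|n)$ one reduces to this by noting that its nilpotent subalgebras coincide with those of $\mathfrak{sl}(m|n)$. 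Once this structural input is available, the rest is the routine additivity bookkeeping above.
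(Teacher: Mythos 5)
Your proposal is correct and takes essentially the same route as the paper: decompose a root vector into graded components, use $\mathfrak{h}\subset\mathfrak{g}(0)$ to see each component lies in $\mathfrak{g}_{\alpha}$, invoke $\dim\mathfrak{g}_{\alpha}=1$ for (i), and derive additivity of $\mathrm{Deg}$ from Theorem~\ref{classical}(v) for (ii). The only difference is that you spell out the height-induction upgrading additivity to linearity, which the paper leaves implicit.
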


\begin{proof}
Fix $\alpha\in\Delta$ and let $x\in\mathfrak{g}_{\alpha}$, $x\neq 0$.  Write $x=\sum_{j\in\mathbb{Z}} x_j$ where $x_j \in\mathfrak{g}(j)$. Then for each $h\in\mathfrak{h}$  we have that
$$ \sum_{j\in\mathbb{Z}} [h,x_j]= [h,x] = \alpha (h) x =  \sum_{j\in\mathbb{Z}} \alpha (h) x_j$$
Since $\mathfrak{h}$ preserves each graded component, this implies $[h,x_j]=\alpha (h) x_j$ for each $h\in\mathfrak{h}$ and $j\in\mathbb{Z}$. Thus $x_j\in\mathfrak{g}_{\alpha}$ for each $j\in\mathbb{Z}$.
For $\mathfrak{g}\neq \mathfrak{psl}(2|2)$,  $\mbox{dim }\mathfrak{g}_{\alpha}=1$ implies that $x=x_k\in\mathfrak{g}(k)$ for some $k\in\mathbb{Z}$.  Hence, $\mathfrak{g}_{\alpha}=\mathbb{C}x\subset\mathfrak{g}(k)$ for some $k\in\mathbb{Z}$.
Now  $[\mathfrak{g}_{\alpha},\mathfrak{g}_{\beta}]\subset\mathfrak{g}_{\alpha+\beta}$,  and by Theorem~\ref{classical}, $[\mathfrak{g}_{\alpha},\mathfrak{g}_{\beta}]\neq 0$ when  $\alpha,\beta,\alpha+\beta\in\Delta\cup\{0\}$. Thus,  $\mbox{Deg}(\alpha)+\mbox{Deg}(\beta)=\mbox{Deg}(\alpha +\beta)$ for  $\alpha,\beta,\alpha+\beta\in\Delta\cup\{0\}$.

\end{proof}

\subsection{Inner derivations}

Let $\mathfrak{g}=\oplus_{j\in\mathbb{Z}}\mathfrak{g}(j)$ be a $\mathbb{Z}$-grading of a Lie superalgebra $\mathfrak{g}$. The linear map $\phi:\mathfrak{g} \rightarrow \mathfrak{g}$ defined by $\sum_{j\in\mathbb{Z}} x_j \mapsto\sum_{j\in\mathbb{Z}} jx_j$, with $x_j\in \mathfrak{g}(j)$, is a derivation.
If $\mathfrak{g}$ is a semisimple Lie algebra or a basic Lie superalgebra, $\mathfrak{g}\neq \mathfrak{psl}(n|n)$, then all derivations of $\mathfrak{g}$ are inner \cite{K77, S}. So there exists $H\in \mathfrak{g}$ that defines the grading, that is, $ [H,x_j]=jx_j$ for all $ x_j\in\mathfrak{g}(j)$.  Since $\phi$ preserves parity, $H\in \mathfrak{g}_{\bar{0}}$.

\begin{lemma}
A $\mathbb{Z}$-grading of $\mathfrak{g}=\mathfrak{sl}(n|n)$ or $\mathfrak{gl}(n|n)$ which satisfies $\mathfrak{Z}(\mathfrak{g}_{\bar{0}})\subset\mathfrak{g}(0)$ is defined by an inner derivation of $\mathfrak{gl}(n|n)$.
\end{lemma}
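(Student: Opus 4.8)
The plan is to produce an explicit element $H\in\mathfrak{gl}(n|n)$ whose adjoint action recovers the grading derivation $\phi$, defined by $\phi(\sum_j x_j)=\sum_j j\,x_j$ with $x_j\in\mathfrak{g}(j)$. The reason the statement is not immediate is that $\mathfrak{gl}(n|n)$ and $\mathfrak{sl}(n|n)$ admit \emph{outer} derivations, so one cannot simply invoke the fact that all derivations are inner, as in the semisimple or generic basic case. The content of the lemma is precisely that the particular derivation $\phi$ coming from a grading satisfying $\mathfrak{Z}(\mathfrak{g}_{\bar 0})\subset\mathfrak{g}(0)$ turns out to be inner for the ambient algebra $\mathfrak{gl}(n|n)$.

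First I would invoke Lemma~\ref{CSA} to fix a Cartan subalgebra $\mathfrak{h}$ of $\mathfrak{g}$ contained in $\mathfrak{g}_{\bar 0}(0)$, and Lemma~\ref{rootdecomposition} to obtain the root space decomposition $\mathfrak{g}=\mathfrak{h}\oplus\bigoplus_{\alpha\in\Delta}\mathfrak{g}_\alpha$ in which every $\mathfrak{g}_\alpha$ is homogeneous, lying in $\mathfrak{g}(\mathrm{Deg}(\alpha))$, with $\mathrm{Deg}$ linear. Since $\mathfrak{h}\subset\mathfrak{g}(0)$, the derivation $\phi$ vanishes on $\mathfrak{h}$ and acts on each $\mathfrak{g}_\alpha$ by the scalar $\mathrm{Deg}(\alpha)$. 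Thus it suffices to find $H$ in a Cartan subalgebra of $\mathfrak{gl}(n|n)$ with $\alpha(H)=\mathrm{Deg}(\alpha)$ for all $\alpha\in\Delta$: then $\mathrm{ad}(H)$ and $\phi$ agree on $\mathfrak{h}$ and on each $\mathfrak{g}_\alpha$, hence on all of $\mathfrak{g}$.

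To construct $H$ I would pass to the diagonal Cartan subalgebra $\widetilde{\mathfrak{h}}$ of $\mathfrak{gl}(n|n)$: if $\mathfrak{g}=\mathfrak{gl}(n|n)$ take $\widetilde{\mathfrak{h}}=\mathfrak{h}$, and if $\mathfrak{g}=\mathfrak{sl}(n|n)$ take $\widetilde{\mathfrak{h}}$ to be the centralizer of $\mathfrak{h}$ in $\mathfrak{gl}(n|n)$, a Cartan subalgebra of $\mathfrak{gl}(n|n)$ of dimension $2n$ that yields the same root spaces $\mathfrak{g}_\alpha$ on $\mathfrak{g}$. The key structural observation is that, viewed as functionals on $\widetilde{\mathfrak{h}}$, the roots of $\mathfrak{gl}(n|n)$ have common kernel exactly $\mathbb{C}\,I_{2n}$, so they span a $(2n-1)$-dimensional subspace of $\widetilde{\mathfrak{h}}^{\,*}$ and any base $\{\alpha_1,\dots,\alpha_{2n-1}\}$ is linearly independent there. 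Hence the linear system $\alpha_i(H)=\mathrm{Deg}(\alpha_i)$ has a solution $H\in\widetilde{\mathfrak{h}}$, and by linearity of $\mathrm{Deg}$ this forces $\alpha(H)=\mathrm{Deg}(\alpha)$ for every root. Since $\mathrm{ad}(H)$ always maps $\mathfrak{gl}(n|n)$ into the supertraceless part and hence preserves $\mathfrak{sl}(n|n)$, this $H\in\mathfrak{gl}(n|n)$ defines the grading in both cases.

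The main obstacle, and the reason $\mathfrak{gl}(n|n)$ rather than $\mathfrak{sl}(n|n)$ must appear, is exactly this degeneracy: because $I_{2n}\in\mathfrak{sl}(n|n)$ lies in the kernel of every root, the roots span only a $(2n-2)$-dimensional space of functionals on the Cartan of $\mathfrak{sl}(n|n)$, so a given degree map need not be realizable by any $H\in\mathfrak{sl}(n|n)$ — this is already visible for $\mathfrak{sl}(1|1)$, where $\mathrm{ad}$ of the entire Cartan is zero yet nontrivial gradings exist. Enlarging to the Cartan of $\mathfrak{gl}(n|n)$ restores linear independence of the simple roots and makes every integral degree map solvable. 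The only points that genuinely require care are checking that this enlarged Cartan reproduces the same root space decomposition of $\mathfrak{sl}(n|n)$ and that the resulting $\mathrm{ad}(H)$ stabilizes $\mathfrak{sl}(n|n)$; both are straightforward once the root span computation above is in place.
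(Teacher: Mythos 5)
Your proposal is correct and follows essentially the same route as the paper: fix a Cartan subalgebra in $\mathfrak{g}(0)$, use Lemma~\ref{rootdecomposition} to make the grading compatible with the root space decomposition, and then solve $\alpha_i(H)=\mathrm{Deg}(\alpha_i)$ for $H$ in the diagonal Cartan of $\mathfrak{gl}(n|n)$, where the simple roots are linearly independent. Your additional remarks on why one must pass to $\mathfrak{gl}(n|n)$ (the common kernel $\mathbb{C}I_{2n}$ of the roots on the Cartan of $\mathfrak{sl}(n|n)$) make explicit what the paper leaves implicit, but the argument is the same.
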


\begin{proof}
We can extend a $\mathbb{Z}$-grading of $\mathfrak{sl}(n|n)$ to a $\mathbb{Z}$-grading of $\mathfrak{g}=\mathfrak{gl}(n|n)$  such that $\mathfrak{Z}(\mathfrak{g}_{\bar{0}})\subset\mathfrak{g}(0)$. Fix a Cartan subalgebra $\mathfrak{h}\subset\mathfrak{g}(0)$.  By Lemma~\ref{rootdecomposition}, the $\mathbb{Z}$-grading is compatible with the root space decomposition.  A $\mathbb{Z}$-grading is determined the value of the degree map on a set of simple roots $\Pi=\{\alpha_1,\ldots,\alpha_{2n-1}\}\subset\Delta$. Since $\Pi$ is a linearly independent set in $\mathfrak{h}^{*}$, there exists $H\in\mathfrak{h}$ such that $\alpha_i(H)=\mbox{Deg}(\alpha_i)$ for $i=1,\ldots,2n-1$.  Clearly, $\mbox{ad }H$ defines the $\mathbb{Z}$-grading of $\mathfrak{sl}(n|n)\subset\mathfrak{gl}(n|n)$.
\end{proof}

\subsection{$\mathbb{Z}$-gradings of $\mathfrak{psl}(n|n)$}

\begin{lemma}\label{indu}
For $n\neq 2$, a $\mathbb{Z}$-grading of $\mathfrak{psl}(n|n)$ is induced from a $\mathbb{Z}$-grading of $\mathfrak{sl}(n|n)$, which satisfies $\mathfrak{Z}(\mathfrak{sl}(n|n)_{\bar{0}})\subset\mathfrak{sl}(n|n)(0)$.
\end{lemma}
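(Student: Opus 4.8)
The plan is to realize the lift at the level of the root space decomposition, using the projection $\pi\colon\mathfrak{sl}(n|n)\to\mathfrak{psl}(n|n)$ whose kernel is the center $\langle I_{2n}\rangle$, rather than by producing a single grading element. Write $\mathfrak{s}=\mathfrak{sl}(n|n)$ and $\mathfrak{p}=\mathfrak{psl}(n|n)$, and fix a $\mathbb{Z}$-grading $\mathfrak{p}=\oplus_{j}\mathfrak{p}(j)$. Since $\mathfrak{Z}(\mathfrak{p}_{\bar0})=\{0\}$ (see Table~1), the hypothesis $\mathfrak{Z}(\mathfrak{p}_{\bar0})\subset\mathfrak{p}(0)$ holds automatically, and because $\mathfrak{p}\neq\mathfrak{psl}(2|2)$ I may invoke Lemma~\ref{CSA} to choose a Cartan subalgebra $\mathfrak{h}_{\mathfrak{p}}\subset\mathfrak{p}_{\bar0}(0)$ and then Lemma~\ref{rootdecomposition} to conclude that the grading is compatible with the root space decomposition $\mathfrak{p}=\mathfrak{h}_{\mathfrak{p}}\oplus\bigoplus_{\alpha\in\Delta}\mathfrak{p}_{\alpha}$ and is recorded by the additive degree map $\mbox{Deg}\colon\Delta\cup\{0\}\to\mathbb{Z}$. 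The restriction $n\neq 2$ is used exactly here, to apply Lemma~\ref{rootdecomposition}: for $\mathfrak{psl}(2|2)$ a root space may be two-dimensional and need not lie in a single graded component.

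Next I transport this data to $\mathfrak{s}$. Setting $\mathfrak{h}_{\mathfrak{s}}:=\pi^{-1}(\mathfrak{h}_{\mathfrak{p}})$ produces a Cartan subalgebra of $\mathfrak{s}$ containing $I_{2n}$, and since every root annihilates $I_{2n}$ the roots of $\mathfrak{s}$ relative to $\mathfrak{h}_{\mathfrak{s}}$ are canonically identified with $\Delta$. For each $\alpha\in\Delta$ the space $\mathfrak{s}_{\alpha}$ meets the kernel $\langle I_{2n}\rangle\subset\mathfrak{h}_{\mathfrak{s}}$ trivially, so $\pi$ restricts to an isomorphism $\mathfrak{s}_{\alpha}\iso\mathfrak{p}_{\alpha}$. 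I may therefore define a grading on $\mathfrak{s}$ directly through the same degree map, by
$$\mathfrak{s}(j):=\bigoplus_{\alpha\in\Delta,\ \mbox{\scriptsize Deg}(\alpha)=j}\mathfrak{s}_{\alpha}\quad(j\neq 0),\qquad \mathfrak{s}(0):=\mathfrak{h}_{\mathfrak{s}}\oplus\bigoplus_{\alpha\in\Delta,\ \mbox{\scriptsize Deg}(\alpha)=0}\mathfrak{s}_{\alpha}.$$

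It then remains to check three points, each immediate from the construction. That $\mathfrak{s}=\oplus_{j}\mathfrak{s}(j)$ is a $\mathbb{Z}$-grading follows from the additivity of $\mbox{Deg}$ together with $[\mathfrak{s}_{\alpha},\mathfrak{s}_{\beta}]\subset\mathfrak{s}_{\alpha+\beta}$ and $[\mathfrak{s}_{\alpha},\mathfrak{s}_{-\alpha}]\subset\mathfrak{h}_{\mathfrak{s}}$, using $\mbox{Deg}(-\alpha)=-\mbox{Deg}(\alpha)$ from Lemma~\ref{rootdecomposition}. The center satisfies $\mathfrak{Z}(\mathfrak{s}_{\bar0})=\langle I_{2n}\rangle\subset\mathfrak{h}_{\mathfrak{s}}\subset\mathfrak{s}(0)$, as required. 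Finally $\pi(\mathfrak{s}(j))=\mathfrak{p}(j)$ for every $j$, so the grading just defined induces the original grading of $\mathfrak{p}$.

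The one genuinely delicate point --- and the reason I avoid the grading-element argument used in the preceding lemma for $\mathfrak{gl}(n|n)$ --- is that the simple roots of $\mathfrak{sl}(n|n)$ are linearly dependent: they all annihilate $I_{2n}$ and hence span only a codimension-one subspace of $\mathfrak{h}_{\mathfrak{s}}^{\ast}$, so the system $\alpha_i(H)=\mbox{Deg}(\alpha_i)$ need not admit a solution $H\in\mathfrak{h}_{\mathfrak{s}}$. Defining $\mathfrak{s}(j)$ componentwise from $\mbox{Deg}$ circumvents this, and the additivity of $\mbox{Deg}$, already guaranteed on all of $\Delta$ by the grading of $\mathfrak{p}$, ensures that no compatibility condition hidden in that linear relation can fail.
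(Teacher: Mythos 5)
Your proof is correct and follows essentially the same route as the paper: fix a Cartan subalgebra in $\mathfrak{g}(0)$ via Lemma~\ref{CSA}, use Lemma~\ref{rootdecomposition} to make the grading compatible with the root space decomposition, and transport the degree map to the root spaces of $\mathfrak{sl}(n|n)$. The only difference is one of explicitness: where the paper asserts that the degrees $d_i=\mbox{Deg}(\alpha_i)$ ``determine uniquely'' a grading of $\mathfrak{sl}(n|n)$ and concludes via generation by the $e_i,f_i$, you define the graded components root-space by root-space and verify the bracket condition from the additivity of $\mbox{Deg}$, which correctly sidesteps the linear dependence of the simple roots in $\mathfrak{h}^{*}_{\mathfrak{sl}(n|n)}$.
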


\begin{proof}
Fix a $\mathbb{Z}$-grading of $\mathfrak{psl}(n|n)$,
and a Cartan subalgebra $\mathfrak{h}\subset\mathfrak{g}(0)$.
Let $\Pi=\{\alpha_1,\ldots,\alpha_n\}$ be a set of simple roots.
By Lemma~\ref{rootdecomposition}, $e_i\in\mathfrak{g}_{\alpha_i}$ and $f_i\in\mathfrak{g}_{-\alpha_i}$ are homogeneous for $i=1,\ldots,n$, and $\mbox{deg}(-\alpha_i)=-\mbox{deg}(\alpha_i)$.
Let $d_i=\mbox{Deg}(\alpha_i)$ for $i=1,\dots,n$.  This determines uniquely a $\mathbb{Z}$-grading of  $\mathfrak{sl}(n|n)$ with $\mathfrak{Z}(\mathfrak{sl}(n|n))\subset\mathfrak{sl}(n|n)(0)$.  Since $\mathfrak{psl}(n|n)$ is generated by $e_1,\dots,e_n,f_1,\ldots,f_n$, the quotient of this $\mathbb{Z}$-grading of $\mathfrak{sl}(n|n)$ by the center must coincide with the original $\mathbb{Z}$-grading of $\mathfrak{psl}(n|n)$.  In particular, $\mathfrak{psl}(n|n)(j)=\mathfrak{sl}(n|n)(j)$ for all $j\in\mathbb{Z}\setminus\{0\}$ and $\mathfrak{psl}(n|n)(0)=\mathfrak{sl}(n|n)(0)/\mathfrak{Z}(\mathfrak{sl}(n|n))$.
\end{proof}

The following lemma can be proved by explicit computation.

\begin{lemma}\label{rempsl}
The $\mathbb{Z}$-gradings of $\mathfrak{g}=\mathfrak{psl}(2|2)$ (up to conjugation by $\mathfrak{g}_{\bar{0}}$) are parameterized as follows.
For each $m\in\mathbb{Z}$, $p,q\in\{0,2\}$, and $(a:b), (c:d)\in\mathbb{P}^{2}$ satisfying $(a:b) \neq (c:d)$ we obtain a $\mathbb{Z}$-grading from the following assignment of degrees to the linear basis (of representatives modulo the center $\mathbb{C}(I_{4})$):
$$\begin{array}{lllll}
\mbox{Deg}(E_{12})=  p &  &\mbox{Deg}(aE_{14}+bE_{32})=     m  &  &\mbox{Deg}(bE_{31}-aE_{24})= m-p\\
\mbox{Deg}(E_{34})=  q  &  &\mbox{Deg}(dE_{41}+cE_{23})=    -m  & &\mbox{Deg}(cE_{13}-dE_{42})= p-m\\
\mbox{Deg}(E_{21})= -p &  &\mbox{Deg}(cE_{14}+dE_{32})=     p+q-m & &\mbox{Deg}(dE_{31}-cE_{24})= q-m\\
\mbox{Deg}(E_{43})= -q  &  &\mbox{Deg}(bE_{41}+aE_{23})=    m-p-q & &\mbox{Deg}(aE_{13}-bE_{42})= m-q\\
\mbox{Deg }(E_{11}+E_{33})=0 & & \mbox{Deg }(E_{22}+E_{44})=0 &&\\
\end{array}$$
\end{lemma}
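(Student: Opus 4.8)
The plan is to compute directly with the root space decomposition of $\mathfrak{g}=\mathfrak{psl}(2|2)$ relative to a Cartan placed in degree $0$. Since all derivations of the semisimple Lie algebra $\mathfrak{g}_{\bar 0}=\mathfrak{sl}(2)\times\mathfrak{sl}(2)$ are inner, the induced grading of $\mathfrak{g}_{\bar 0}$ is defined by a semisimple $H\in\mathfrak{g}_{\bar 0}(0)$; after conjugating by $\mathfrak{g}_{\bar 0}$ I may assume the fixed Cartan $\mathfrak{h}$ contains $H$ and lies in $\mathfrak{g}_{\bar 0}(0)$, so by \cite{K77} it is a Cartan subalgebra of $\mathfrak{g}$. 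Running the argument of Lemma~\ref{rootdecomposition}(i) --- $\mathfrak{h}$ preserves every graded component, so each root space is a graded subspace --- the four even root spaces $\mathbb{C}E_{12},\mathbb{C}E_{21},\mathbb{C}E_{34},\mathbb{C}E_{43}$ are one-dimensional, hence homogeneous, of degrees $p,-p,q,-q$ for integers $p,q$. The feature of $\mathfrak{psl}(2|2)$, and the reason Lemma~\ref{rootdecomposition} excludes it, is that the four odd root spaces are \emph{two}-dimensional, namely $V_1=\langle E_{14},E_{32}\rangle$, $V_2=\langle E_{41},E_{23}\rangle$, $V_3=\langle E_{13},E_{42}\rangle$, $V_4=\langle E_{31},E_{24}\rangle$, and the grading need only make each $V_i$ a direct sum of graded lines rather than forcing it to be homogeneous.

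Next I would extract the constraints from the bracket relations, which by Theorem~\ref{classical}(v) are nonzero exactly when the roots add to a root or to $0$. The relevant brackets land in one-dimensional spaces, namely $[V_1,V_3]\subset\mathbb{C}E_{12}$, $[V_1,V_4]\subset\mathbb{C}E_{34}$, $[V_2,V_3]\subset\mathbb{C}E_{43}$, $[V_2,V_4]\subset\mathbb{C}E_{21}$, while $[V_1,V_2]$ and $[V_3,V_4]$ land in $\mathfrak{h}=\mathfrak{g}(0)$. Writing the graded lines in each $V_i$ as unknown projective points and using that $\mbox{Deg}$ is additive on nonzero brackets, these relations force two things: first, the two degrees occurring in $V_1$ are forced to sum to $p+q$ (via the closed system of brackets together with the duality of $V_1$ and $V_2$ from Theorem~\ref{classical}(ii)), so they may be written $m$ and $p+q-m$ for a single integer $m$; second, the splitting line in each $V_i$ is cut out by one of two fixed projective points $(a:b),(c:d)$ (lines in the two-dimensional space $V_1$) that must be \emph{the same} across all four spaces, with the sign bookkeeping recorded in the statement, because a chosen line in $V_1$ pairs under the brackets above with a definite line in each of $V_2,V_3,V_4$. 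Solving the resulting linear system for the eight odd degrees in terms of $m,p,q$ then reproduces exactly the table.

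Finally I would pass to conjugacy classes and prove the converse. Conjugating by the torus of $\mathfrak{g}_{\bar 0}$ leaves all degrees fixed, while the Weyl group $\mathbb{Z}/2\times\mathbb{Z}/2$ of $\mathfrak{g}_{\bar 0}$ and the residual automorphisms acting on the two-dimensional multiplicity of $\mathfrak{g}_{\bar 1}$ act on the data $(m,p,q,(a:b),(c:d))$; I would use this action to reduce $p,q$ to the representatives listed in the statement and to record the requirement $(a:b)\neq(c:d)$, which simply says that the two graded lines of $V_1$ are distinct. Conversely, one checks that each admissible choice of $(m,p,q,(a:b),(c:d))$ does define a $\mathbb{Z}$-grading, by verifying $[\mathfrak{g}(i),\mathfrak{g}(j)]\subset\mathfrak{g}(i+j)$ on the given basis --- a finite check using the same structure constants as above.

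I expect the main obstacle to be the second step: keeping track of the four coupled two-dimensional odd root spaces and proving that a single pair of projective points governs all of their splittings simultaneously. This is exactly the $\mathfrak{psl}(2|2)$ phenomenon of a two-dimensional multiplicity space inside $\mathfrak{g}_{\bar 1}$, and the bookkeeping of signs and structure constants across the brackets $[V_i,V_j]$ is what makes the computation delicate, even though each individual bracket is elementary.
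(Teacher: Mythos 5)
The paper offers no argument for this lemma beyond the sentence ``can be proved by explicit computation,'' so your proposal is in effect supplying the computation the author omits, and the bulk of it is the right computation. You correctly isolate the one feature that makes $\mathfrak{psl}(2|2)$ exceptional --- the four two-dimensional odd root spaces $V_1,\ldots,V_4$ --- and the mechanism you describe is sound: the brackets $V_1\times V_3\to\mathbb{C}E_{12}$, $V_1\times V_4\to\mathbb{C}E_{34}$, $V_2\times V_3\to\mathbb{C}E_{43}$, $V_2\times V_4\to\mathbb{C}E_{21}$, $V_1\times V_2\to\mathfrak{h}$, $V_3\times V_4\to\mathfrak{h}$ are nondegenerate pairings of two-dimensional spaces into one-dimensional (or degree-zero) targets, so each graded line of $V_1$ pairs nontrivially with exactly one graded line of each other $V_i$ and the degrees of partners must add to $p$, $q$, $0$ accordingly. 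Chasing these relations does force the two degrees in $V_1$ to be $m$ and $p+q-m$ and does force a single pair of distinct projective points $(a:b)\neq(c:d)$ to govern all four splittings with exactly the sign pattern in the table (e.g.\ $[aE_{14}+bE_{32},\,dE_{41}+cE_{23}]=(bc-ad)(E_{22}+E_{33})$ and $[aE_{14}+bE_{32},\,bE_{41}+aE_{23}]\equiv ab\,I_4\equiv 0$). The converse verification is, as you say, a finite check.

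The genuine gap is in your last step. You propose to ``use the Weyl group and the residual automorphisms to reduce $p,q$ to the representatives listed,'' i.e.\ to $\{0,2\}$. This cannot be done: conjugation by $\mathfrak{g}_{\bar 0}$ carries one grading to another with the same graded dimensions $\dim\mathfrak{g}(j)_{\bar i}$, and the Weyl group can at most change the sign of $p$ and $q$. Your own computation (correctly) produces the table for \emph{arbitrary} $p,q\in\mathbb{Z}$, and these extra gradings really exist and are not conjugate to anything with $p,q\in\{0,2\}$: for instance $H=\mathrm{diag}(2,-2,0,0)\in\mathfrak{sl}(2|2)$ defines a $\mathbb{Z}$-grading of $\mathfrak{psl}(2|2)$ with $\mathrm{Deg}(E_{12})=4$, so $\dim\mathfrak{g}(4)_{\bar 0}=1$, whereas every grading in the lemma's list has $\mathfrak{g}(j)_{\bar 0}=0$ for $|j|>2$. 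So either you should state and prove the lemma with $p,q\in\mathbb{Z}$ (which is what your argument actually yields, and is all that the subsequent good-grading analysis in Section~\ref{section6a} requires, since goodness of $e=rE_{12}+sE_{34}$ separately forces $p,q\in\{0,2\}$), or you must acknowledge that the restriction $p,q\in\{0,2\}$ is not achievable by conjugation and cannot be ``proved'' as part of this parameterization. As written, that reduction step would fail.
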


\subsection{The bilinear form}

\begin{lemma}\label{pairing}  Let $\mathfrak{g}$ be a basic Lie superalgebra, or let $\mathfrak{g}$ be $\mathfrak{gl}(m|n)$ or $\mathfrak{sl}(n|n)$ with $(\cdot,\cdot)$ as defined in Section~\ref{theform}.
Fix a $\mathbb{Z}$-grading  $\mathfrak{g}=\oplus_{j\in\mathbb{Z}} \mathfrak{g}(j)$ satisfying $\mathfrak{Z}(\mathfrak{g})\subset\mathfrak{g}(0)$. Then
\begin{enumerate}[(i)]
  \item $(\mathfrak{g}(i),\mathfrak{g}(j))=0$ for $i\neq -j$;
  \item $(x,\mathfrak{g}(j))\neq 0$ for $x\in\mathfrak{g}_{-j}$, $x\not\in \mathfrak{Z}(\mathfrak{g})$.
\end{enumerate}
\end{lemma}
\begin{proof}
This follows from Theorem~\ref{classical}, Lemma~\ref{degree} and Lemma~\ref{rempsl}.
\end{proof}

\subsection{Characteristics and the action of the Weyl groupoid}
We can represent a $\mathbb{Z}$-grading by the values of the degree map on a set of simple roots. In this section, we examine the properties of the degree map with respect to different sets of simple roots.

Let $\mathfrak{g}$ be a basic Lie superalgebra, $\mathfrak{g}\neq\mathfrak{psl}(2|2)$, or let $\mathfrak{g}$ be $\mathfrak{gl}(m|n)$ or $\mathfrak{sl}(n|n)$.  Fix a $\mathbb{Z}$-grading  $\mathfrak{g}=\oplus_{j\in\mathbb{Z}} \mathfrak{g}(j)$ satisfying $\mathfrak{Z}(\mathfrak{g}_{\bar{0}})\subset \mathfrak{g}_{\bar{0}}(0)$.
Fix a Cartan subalgebra $\mathfrak{h}\subset\mathfrak{g}_{\bar{0}}(0)$, and let $\Delta$ be the set of roots.
Let  $\Delta_{\geq}=\{\alpha\in\Delta| \mbox{Deg}(\alpha)\geq 0\}$ and $\Delta_{<}=\{\alpha\in\Delta| \mbox{Deg}(\alpha)<0\}$.

\begin{lemma}\label{positive}
There exists a Borel subalgebra  $\mathfrak{b}\subset\mathfrak{g}_{\geq}$,  $\Delta^{+}\subset\Delta_{\geq}$. In particular, $\mathfrak{g}_{\geq}$ is a parabolic subalgebra with nilradical $\mathfrak{g}_{+}$ and Levi subalgebra $\mathfrak{g}(0)$.
\end{lemma}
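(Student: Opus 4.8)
The plan is to produce a regular element $h\in\mathfrak{h}$ whose associated triangular decomposition satisfies $\Delta^{+}\subset\Delta_{\geq}$; the Borel subalgebra $\mathfrak{b}=\mathfrak{h}\oplus\mathfrak{n}^{+}$ it determines then automatically lies in $\mathfrak{g}_{\geq}$, since by Lemma~\ref{degree} we have $\mathfrak{g}_{\geq}=\mathfrak{h}\oplus\bigoplus_{\alpha\in\Delta_{\geq}}\mathfrak{g}_{\alpha}$. The idea is to build $h$ as a small perturbation $h=h_{0}+\varepsilon h_{1}$, where $h_{0}$ sorts roots by degree and $h_{1}$ breaks the ties among the degree-zero roots so that the result is regular.

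First I would fix an element $h_{0}\in\mathfrak{h}$ with $\alpha(h_{0})=\mbox{Deg}(\alpha)$ for all $\alpha\in\Delta$. Such an element exists because the grading is defined by an inner derivation: as recalled in the subsection on inner derivations (and, for $\mathfrak{gl}(n|n)$ and $\mathfrak{sl}(n|n)$, by the corresponding lemma, and for $\mathfrak{psl}(n|n)$ by Lemma~\ref{indu}) there is $H\in\mathfrak{g}_{\bar{0}}$ with $[H,x_{j}]=jx_{j}$ on $\mathfrak{g}(j)$. Since $\mathfrak{h}\subset\mathfrak{g}_{\bar{0}}(0)$, the element $H$ centralizes $\mathfrak{h}$, and as $\mathfrak{h}$ is a Cartan subalgebra of the reductive algebra $\mathfrak{g}_{\bar{0}}$ it is self-centralizing, so $H\in\mathfrak{h}$; then $\alpha(H)=\mbox{Deg}(\alpha)$ and I set $h_{0}=H$. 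Passing to the real subspace of $\mathfrak{h}$ on which all roots take real values, I would then choose $h_{1}$ with $\alpha(h_{1})\in\mathbb{R}$ for all $\alpha$ and $\alpha(h_{1})\neq 0$ for the finitely many $\alpha\in\Delta_{0}$.

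Next, take $h=h_{0}+\varepsilon h_{1}$ with $\varepsilon>0$ small enough that $|\varepsilon\,\alpha(h_{1})|<|\mbox{Deg}(\alpha)|$ whenever $\mbox{Deg}(\alpha)\neq 0$. Then $\alpha(h)$ is real and nonzero for every $\alpha$: if $\mbox{Deg}(\alpha)\neq 0$ it has the sign of $\mbox{Deg}(\alpha)$, and if $\mbox{Deg}(\alpha)=0$ it equals $\varepsilon\,\alpha(h_{1})\neq 0$. Hence $h$ is regular. Moreover $\alpha(h)>0$ forces $\mbox{Deg}(\alpha)\geq 0$, since $\mbox{Deg}(\alpha)<0$ would give $\alpha(h)<0$; thus the positive system $\Delta^{+}=\{\alpha\mid \alpha(h)>0\}$ is contained in $\Delta_{\geq}$, and $\mathfrak{b}=\mathfrak{h}\oplus\bigoplus_{\alpha\in\Delta^{+}}\mathfrak{g}_{\alpha}\subset\mathfrak{h}\oplus\bigoplus_{\alpha\in\Delta_{\geq}}\mathfrak{g}_{\alpha}=\mathfrak{g}_{\geq}$.

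For the final assertion I note that $\mathfrak{g}_{\geq}$ is a graded subalgebra ($[\mathfrak{g}(i),\mathfrak{g}(j)]\subset\mathfrak{g}(i+j)$ keeps the non-negative part) containing the Borel $\mathfrak{b}$, hence is parabolic by definition; its subspace $\mathfrak{g}_{+}$ is an ideal all of whose elements are ad-nilpotent, because $\mbox{ad}$ of a positively graded element strictly raises degree and the grading is bounded, while $\mathfrak{g}(0)$ is a reductive complement by Lemma~\ref{CSA}, so $\mathfrak{g}_{\geq}=\mathfrak{g}(0)\ltimes\mathfrak{g}_{+}$ is the Levi decomposition. The step most in need of care is the existence of the defining element $h_{0}\in\mathfrak{h}$: for $\mathfrak{g}=\mathfrak{psl}(n|n)$ not all derivations are inner and the form on $\mathfrak{h}$ degenerates, so one cannot simply dualize the functional $\mbox{Deg}$ through the form, which is precisely why the reduction of Lemma~\ref{indu} together with the $\mathfrak{gl}(n|n)$ case is invoked, and why $\mathfrak{psl}(2|2)$---where $\dim\mathfrak{g}_{\alpha}>1$ breaks Lemma~\ref{degree}---must be excluded.
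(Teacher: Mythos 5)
Your proof is correct in spirit but takes a genuinely different route from the paper's. The paper never produces a regular element: it starts from an arbitrary base $\Pi$, notes via the additivity of $\mbox{Deg}$ (Lemma~\ref{degree}) that $\Delta^{+}\not\subset\Delta_{\geq}$ forces some \emph{simple} root into $\Delta_{<}$, reflects at that simple root, and observes from (\ref{eqnodd}) and (\ref{eqneven}) that each such reflection strictly decreases $|\Delta^{+}\cap\Delta_{<}|$; induction on this quantity finishes the argument. That version needs only the combinatorics of bases and the Weyl groupoid, and no element of $\mathfrak{h}$ realizing $\mbox{Deg}$. Your perturbation $h=h_{0}+\varepsilon h_{1}$ is the standard analytic alternative and is cleaner where it applies: for basic $\mathfrak{g}$ with all derivations inner, and for $\mathfrak{gl}(m|n)$ and $\mathfrak{sl}(m|n)$ with $m\neq n$, your steps (the self-centralizing Cartan giving $H\in\mathfrak{h}$, genericity of $h_{1}$ on the finitely many degree-zero roots, smallness of $\varepsilon$) all check out, as does the Levi-decomposition paragraph at the end.

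The soft spot is exactly where you point, but the fix you sketch does not close it. For $\mathfrak{g}=\mathfrak{sl}(n|n)$ or $\mathfrak{psl}(n|n)$, the element $H$ supplied by Lemma~\ref{indu} together with the inner-derivation lemma lies in the Cartan subalgebra of $\mathfrak{gl}(n|n)$, and in general it cannot be replaced by an element of $\mathfrak{h}\subset\mathfrak{g}$: any two choices of $H$ differ by a multiple of $I_{2n}$, $\mbox{supertr}(I_{2n})=0$, and $\mbox{supertr}(H)$ may be nonzero. Concretely, for the consistent grading $\mathfrak{g}=\mathfrak{g}(-1)\oplus\mathfrak{g}(0)\oplus\mathfrak{g}(1)$ of $\mathfrak{psl}(n|n)$ with $\mbox{Deg}(\epsilon_{i}-\delta_{j})=1$ for all $i,j$ (standard weight notation for the diagonal Cartan), no regular element of $\mathfrak{h}$ even yields $\Delta^{+}\subset\Delta_{\geq}$: that would require $\min_{i}\mbox{Re}(a_{i})>\max_{j}\mbox{Re}(b_{j})$ for a diagonal representative $\mbox{diag}(a_{1},\ldots,a_{n}|b_{1},\ldots,b_{n})$ with $\sum a_{i}=\sum b_{j}$, which is impossible. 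So in these cases your construction has to be carried out entirely inside $\mathfrak{gl}(n|n)$, and the resulting positive system (equivalently, base) accepted as defining the Borel subalgebra of $\mathfrak{g}$ --- which is in effect what the paper's base-and-reflection formulation delivers without comment. With that reduction made explicit, your argument goes through.
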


\begin{proof} Fix a $\mathbb{Z}$-grading $\mathfrak{g}=\oplus_{j\in\mathbb{Z}} \mathfrak{g}(j)$
with $\mathfrak{Z}(\mathfrak{g}_{\bar{0}})\subset\mathfrak{g}(0)$.
Let $\Pi$ be a base of $\Delta$ and $\Delta=\Delta^{+}\sqcup\Delta^{-}$.  If $\Delta^{+}\not\subset\Delta_{\geq}$, then there is $\alpha_k\in\Pi\cap \Delta_{<}$. Let $r_k$ denote the reflection of $\Delta$ with respect to $\alpha_k$ (see Section~\ref{reflections}).  Then $\Pi':=\{r_k(\alpha_1),\ldots,r_k(\alpha_n)\}$ is a base for $\mathfrak{g}$ with decomposition $\Delta'=\Delta'^{+}\sqcup\Delta'^{-}$.   By (\ref{eqnodd}), (\ref{eqneven}) and Lemma~\ref{degree},
$|\Delta'^{+}\cap\Delta_{<}|=|\Delta^{+}\cap\Delta_{<}|-|\{\alpha_k,2\alpha_k\}| < |\Delta^{+}\cap\Delta_{<}|$. Since $\Delta$ is finite, the claim follows by induction.
\end{proof}

It may be possible to choose more than one Borel subalgebra in $\mathfrak{g}_{\geq}$. However, we have

\begin{lemma}\label{zero}
Suppose $\Delta^{+}_1,\Delta^{+}_2\subset\Delta_{\geq}$. Let $\gamma\in\Delta_1^{+}\setminus\Delta_2^{+}$. Then $\mbox{Deg}(\gamma)=-\mbox{Deg}(-\gamma)=0$.
\end{lemma}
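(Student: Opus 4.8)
The plan is to combine two facts: that a positive system together with its negative exhausts all of $\Delta$, and that the degree map reverses sign under negation (Lemma~\ref{degree}). The point is that $\gamma$ being positive for the first system but negative for the second pins its degree from both sides, leaving only the value $0$.

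First I would record that each $\Delta_i^{+}$ is a positive system, so $\Delta=\Delta_i^{+}\sqcup(-\Delta_i^{+})$ for $i=1,2$. Since $\gamma\in\Delta_1^{+}\subset\Delta_{\geq}$, the definition of $\Delta_{\geq}$ gives $\mbox{Deg}(\gamma)\geq 0$ immediately. Next, because $\gamma\in\Delta$ but $\gamma\notin\Delta_2^{+}$, the partition $\Delta=\Delta_2^{+}\sqcup(-\Delta_2^{+})$ forces $\gamma\in-\Delta_2^{+}$, that is, $-\gamma\in\Delta_2^{+}\subset\Delta_{\geq}$; hence $\mbox{Deg}(-\gamma)\geq 0$. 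By Lemma~\ref{degree} we have $\mbox{Deg}(-\gamma)=-\mbox{Deg}(\gamma)$, so this second inequality reads $\mbox{Deg}(\gamma)\leq 0$. Combining the two inequalities yields $\mbox{Deg}(\gamma)=0$, and then $\mbox{Deg}(-\gamma)=-\mbox{Deg}(\gamma)=0$ as well, which is exactly the claim.

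There is no substantial obstacle here; the only step meriting care is the observation that $\gamma\notin\Delta_2^{+}$ forces $-\gamma\in\Delta_2^{+}$, which is simply the statement that $\Delta_2^{+}$ is a genuine positive system, so that its complement in $\Delta$ is $-\Delta_2^{+}$. Everything else is the sign-reversal of the degree map from Lemma~\ref{degree} applied to the two containments $\Delta_i^{+}\subset\Delta_{\geq}$, which squeeze $\mbox{Deg}(\gamma)$ between $0$ and $0$.
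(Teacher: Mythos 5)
Your proof is correct and follows essentially the same route as the paper: observe that $\gamma\notin\Delta_2^{+}$ forces $-\gamma\in\Delta_2^{+}$, so both $\gamma$ and $-\gamma$ lie in $\Delta_{\geq}$, and the sign-reversal $\mbox{Deg}(-\gamma)=-\mbox{Deg}(\gamma)$ from Lemma~\ref{degree} squeezes the degree to zero. The only difference is that you spell out the partition $\Delta=\Delta_2^{+}\sqcup(-\Delta_2^{+})$ explicitly, which the paper leaves implicit.
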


\begin{proof}
By  Lemma~\ref{degree}, $-\gamma\in\Delta_2^{+}$.  So $\pm\gamma\in\Delta_{\geq}$ implying $\mbox{Deg}(\gamma)=-\mbox{Deg}(-\gamma)=0$.
\end{proof}

Now for each base $\Pi\subset\Delta$, the degree map of a $\mathbb{Z}$-grading is determined by its restriction to $\Pi$, that is, by $D:\Pi\rightarrow\mathbb{Z}$.  A reflection at a simple root of $\Pi$ yields a new map $D':\Pi'\rightarrow\mathbb{Z}$, where $\Pi'$ is the reflected base and $D'$ is defined on $\Pi'$ by linearity (see Section~\ref{groupoid}).  The maps $D:\Pi\rightarrow\mathbb{Z}$ and  $D':\Pi'\rightarrow\mathbb{Z}$ define the same grading on $\Delta$. Moreover, any map $D':\Pi'\rightarrow\mathbb{Z}$ obtained from $D:\Pi\rightarrow\mathbb{Z}$ by the action of the Weyl groupoid $\mathcal{W}$ defines the same grading on $\Delta$.

If $\Pi\subset\Delta_{+}$, then the induced map $\mbox{Deg}:\Pi\rightarrow\mathbb{N}$ is called the {\em characteristic} of the $\mathbb{Z}$-grading with respect to $\Pi$. It is natural to ask the following question:
when do two maps $D_1:\Pi_1\rightarrow\mathbb{N}$ and $D_2:\Pi_2\rightarrow\mathbb{N}$ define the same $\mathbb{Z}$-grading, i.e. when can they be extended to a linear map $\mbox{Deg}:\Delta\cup\{0\}\rightarrow\mathbb{Z}$?

\begin{theorem}
Let $\Pi_1=\{\alpha_1,\ldots,\alpha_n\},\ \Pi_2=\{\beta_1,\ldots,\beta_n\}$ be two different bases for $\Delta$.
If the maps $D_1:\Pi_1\rightarrow\mathbb{N}$ and $D_2:\Pi_2\rightarrow\mathbb{N}$ define the same grading, then there is a sequence of even and odd reflections $\mathcal{R}$ at simple roots of degree zero such that (after reordering) $\mathcal{R}(\alpha_i)=\beta_i$ and $D_1(\alpha_i)=D_2(\beta_i)$ for $i=1,\ldots,n$.
\end{theorem}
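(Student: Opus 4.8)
The plan is to prove this by induction on $N=|\Delta_1^{+}\setminus\Delta_2^{+}|$, where $\Delta_i^{+}$ denotes the positive system generated by $\Pi_i$, running the transitivity argument for the Weyl groupoid from Section~\ref{groupoid} but arranging that every reflection used occurs at a simple root of degree zero. First I would record the key setup observation: since $D_1$ and $D_2$ are both restrictions of the single degree map $\mbox{Deg}:\Delta\cup\{0\}\to\mathbb{Z}$ (they define the same grading) and both take values in $\mathbb{N}$, each simple root of $\Pi_1$ and of $\Pi_2$ has nonnegative degree. Because $\mbox{Deg}$ is linear and every positive root is a nonnegative integral combination of simple roots, it follows that $\Delta_1^{+}\subset\Delta_{\geq}$ and $\Delta_2^{+}\subset\Delta_{\geq}$. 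This places us exactly in the hypothesis of Lemma~\ref{zero}, which is the engine of the whole argument.

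For the inductive step, suppose the current base differs from $\Pi_2$. Then the two positive systems differ (a base is recovered from its positive system), so $N\geq 1$, and by the argument in Section~\ref{groupoid} there is a simple root $\alpha_k$ of the current base lying in $\Delta_1^{+}\setminus\Delta_2^{+}$. Applying Lemma~\ref{zero} with $\gamma=\alpha_k$ gives $\mbox{Deg}(\alpha_k)=0$, so this is a legitimate degree-zero reflection. I would then reflect at $\alpha_k$ to produce the base $\Pi_1'=r_k(\Pi_1)$ with associated positive system $r_k(\Delta_1^{+})$. Two facts must be checked, both consequences of reflecting at a degree-zero root. First, by the computation in Section~\ref{groupoid}, reflection at a root of degree zero preserves the degree of every root, so $\mbox{Deg}(r_k(\alpha_i))=\mbox{Deg}(\alpha_i)$ for all $i$; in particular $\Pi_1'\subset\Delta_{\geq}$ and hence $r_k(\Delta_1^{+})\subset\Delta_{\geq}$, so Lemma~\ref{zero} remains applicable at the next stage. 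Second, by (\ref{eqnodd}) and (\ref{eqneven}) the reflection only flips the sign of $\alpha_k$ (and possibly $2\alpha_k$) among the positive roots, so $|r_k(\Delta_1^{+})\setminus\Delta_2^{+}|<N$.

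Iterating, after finitely many reflections at degree-zero simple roots the positive system becomes $\Delta_2^{+}$, so the composite $\mathcal{R}$ satisfies $\mathcal{R}(\Pi_1)=\Pi_2$; reordering the $\beta_i$ so that $\beta_i=\mathcal{R}(\alpha_i)$ gives the first claimed identity. For the second, since every reflection in $\mathcal{R}$ was at a degree-zero root and therefore preserved $\mbox{Deg}$, we obtain $D_2(\beta_i)=\mbox{Deg}(\beta_i)=\mbox{Deg}(\mathcal{R}(\alpha_i))=\mbox{Deg}(\alpha_i)=D_1(\alpha_i)$. The only real obstacle is the bookkeeping that keeps Lemma~\ref{zero} applicable throughout: one must maintain the invariant that after each reflection the new positive system still lies in $\Delta_{\geq}$, which is precisely the degree-preservation property of degree-zero reflections recorded in Section~\ref{groupoid}. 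Once that invariant is in place, the statement reduces to the standard transitivity induction of Section~\ref{groupoid}, with all reflections automatically forced to occur at degree-zero roots by Lemma~\ref{zero}.
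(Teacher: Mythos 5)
Your proposal is correct and follows essentially the same route as the paper: pick a simple root $\alpha_k\in\Pi_1\setminus\Delta_2^{+}$, invoke Lemma~\ref{zero} to see that $\mathrm{Deg}(\alpha_k)=0$ so that the reflection preserves all degrees, note that $|\Delta'^{+}\setminus\Delta_2^{+}|$ strictly decreases, and induct. Your explicit verification that the new positive system stays inside $\Delta_{\geq}$ (so Lemma~\ref{zero} remains applicable at each stage) is a detail the paper leaves implicit, but it is the same argument.
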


\begin{proof}
Suppose that $\mbox{Deg}:\Delta\cup\{0\}\rightarrow\mathbb{Z}$ is a linear map whose restriction to $\Pi_1$  is $D_1$ and to  $\Pi_2$ is $D_2$.
Let $\alpha_k\in\Pi_1$ such that $\alpha_k\not\in\Delta_2^{+}$, and let $r_k$ be the reflection at $\alpha_k$. By Lemma~\ref{zero}, $\mbox{Deg}(\alpha_k)=0$ which implies
$\mbox{Deg}(r_k(\alpha_i))=\mbox{Deg}(\alpha_i)$ for $i=1,\ldots ,n$. Let $\Pi':=\{r_k(\alpha_{1}),\ldots ,r_k(\alpha_{n})\}$ and let $\Delta'=\Delta'^{+}\sqcup\Delta'^{-}$ be the  corresponding decomposition. Then $|\Delta'^{+}\setminus\Delta_2^{+}| < |\Delta_1^{+}\setminus\Delta_2^{+}|$.
Since $\Delta$ is finite, it follows by
induction that there is a sequence of reflections $\mathcal{R}$ at simple roots of degree zero such that $R(\alpha_i)=\beta_i$ and $D_1(\alpha_i)=\mbox{Deg}(\alpha_i)=\mbox{Deg}(\beta_i)=D_2(\beta_i)$ for $i=1,\ldots,n$.
\end{proof}

Given a map $D:\Pi\rightarrow\mathbb{N}$, an even reflection at a simple root of positive degree yields a map $D':\Pi'\rightarrow\mathbb{Z}$ with $\mbox{Im }D'\not\subset\mathbb{N}$. Whereas, an even reflection at a simple root of degree zero does not change $D$ or the Dynkin diagram corresponding to $\Pi$.  Thus, a $\mathbb{Z}$-grading is determined up to equivalence by a Dynkin diagram $\Gamma$ and a labeling the vertices of $\Gamma$ by nonnegative integers.  Hence, for a simple Lie algebra $\mathfrak{g}$, there is a bijection between all $\mathbb{Z}$-gradings of $\mathfrak{g}$ up to conjugation and all characteristics of the Dynkin diagram \cite{EK}.  However, a Lie superalgebra usually has more than one Dynkin diagram.

Two Dynkin diagrams $\Gamma_1,\Gamma_2$ for a basic Lie superalgebra $\mathfrak{g}$ with degree maps $D_i:\Gamma_i\rightarrow\mathbb{N}$ define the same $\mathbb{Z}$-grading if and only if there is a sequence of odd reflections $\mathcal{R}$ at simple isotropic roots of degree zero such that $\mathcal{R}(\Gamma_1)=\Gamma_2$ and $D_1=D_2$ with the ordering of the vertices defined by $\mathcal{R}$.  This defines an equivalence relation on Dynkin diagrams with nonnegative integer labels.  We note that if a marked diagram has no isotropic roots of degree zero, then there is only one member in its equivalence class.

\section{Centralizers in basic Lie superalgebras}\label{section2}

\subsection{Nilpotent even elements}
Let $\mathfrak{g}$ be a basic Lie superalgebra, or let $\mathfrak{g}$ be $\mathfrak{gl}(m|n)$ or $\mathfrak{sl}(n|n)$.
In this section we discuss the orbits of nilpotent even elements  in $\mathfrak{g}$ under the action of the group of inner automorphisms $G$.  Recall that $G$ is the group of automorphisms of $\mathfrak{g}$ generated by $\mbox{exp}(\mbox{ad }x)$ for $x\in\mathfrak{g}_{\bar{0}}$.  For $x\in\mathfrak{g}$, let $Gx$ denote the {\em orbit} of $x$ in $\mathfrak{g}$ under the action of $G$.  An element $e\in\mathfrak{g}$ is called {\em nilpotent} if the action of $\mbox{ad }e$ on $\mathfrak{g}$ is nilpotent.

Let $e\in\mathfrak{g}$ be a nilpotent even element.  Then $e\in\mathfrak{g}_{\bar{0}}'$, since
elements of $\mathfrak{Z}(\mathfrak{g}_{\bar{0}})$ are semisimple in $\mathfrak{g}$ \cite{K77}. So $Ge\subset\mathfrak{g}_{\bar{0}}'$.  Let $G'$ be the group of automorphisms of $\mathfrak{g}_{\bar{0}}'$ generated by $\mbox{exp}(\mbox{ad }e)$  for $e\in\mathfrak{g}_{\bar{0}}'$. It follows that $Ge=G'e\subset\mathfrak{g}_{\bar{0}}'$.
Moreover, if $e\in\mathfrak{g}_{\bar{0}}'$ is ad-nilpotent on $\mathfrak{g}_{\bar{0}}'$, then $e$ is ad-nilpotent on $\mathfrak{g}$. This follows from the Jacobson-Morosov Theorem and $\mathfrak{sl}_2$ theory since $\mathfrak{g}$ is finite dimensional. Thus we are reduced to studying nilpotent orbits in the semisimple Lie algebra $\mathfrak{g}'_{\bar{0}}$.

Let $m,n\in\mathbb{Z}_{+}$.  We say that $(p,q)$ is a partition of $(m|n)$ if $p$ is a partition of $m$ and $q$ is a partition of $n$. There is a one-to-one correspondence between $G$-orbits of nilpotent even elements in $\mathfrak{gl}(m|n)$ and partitions of $(m|n)$. Two nilpotent even elements of $\mathfrak{osp}(m|2n)$ belong to the same $O(m)\times SP(2n)$ orbit if and only if they belong the same $GL(m)\times GL(2n)$ orbit.  This follows from the theory of nilpotent orbits in finite-dimensional simple Lie algebras (see for example \cite{J}).

Given a partition  $p$, we let $p_1>\cdots >p_{a}$ be the distinct nonzero parts of $p$, and
we write $p=(p_1^{m_{p_1}},\ldots,p_{a}^{m_{p_{a}}})$, where $m_{p_i}$ is the multiplicity of $p_i$ in $p$.  A partition is called {\em  symplectic} (resp. {\em orthogonal}) if $m_{p_i}$ is even for odd $p_i$ (resp. even $p_i$).
We say that a partition $(p|q)$ of $(m|2n)$ is orthosymplectic if $p$ is an orthogonal partition of $m$ and $q$ is a symplectic partition of $2n$.
There is a one-to-one correspondence between $G$-orbits of nilpotent even elements in $\mathfrak{osp}(m|2n)$ and orthosymplectic partitions of $(m|n)$.  See Section~\ref{section7} for a description of orbit representatives.

\subsection{Centralizers of nilpotent even elements}

In this section, we describe the centralizer $\mathfrak{g}^{e}$ of a nilpotent even element $e\in\mathfrak{g}$ by choosing a nice basis of $V_{0}\oplus V_{1}$ (and hence of $\mbox{End}(V_{0}\oplus V_{1})$), which we use to compute the dimensions of $\mathfrak{g}_{\bar{0}}^{e}$ and $\mathfrak{g}_{\bar{1}}^{e}$.  This is analogous to the Lie algebra case \cite{J}. This was done for $\mathfrak{gl}(m|n)$ in \cite{WZ} for a field of prime characteristic, but the construction is identical in characteristic zero.

\subsubsection{Centralizers of nilpotent even elements in $\mathfrak{gl}(m|n)$}\label{secbas}
Let $\mathfrak{g}=\mathfrak{gl}(m|n):=\mbox{End}(V_{0}\oplus V_{1})$, where $\mathfrak{g}_{\bar{0}}=\mbox{End}(V_{0}) \oplus \mbox{End}(V_{1})$,  $\mathfrak{g}_{\bar{1}}=\mbox{Hom}(V_{0},V_{1}) \oplus \mbox{Hom}(V_{1},V_{0})$ and $\mbox{dim }V_{0}=m$, $\mbox{dim }V_{1}=n$.
Let $e\in\mathfrak{g}$ be a nilpotent element such that $e\in\mathfrak{g}_{\bar{0}}$. Then $e$ corresponds to some partition $(p,q)$ of $(m|n)$ given by positive integers
 $p_1\geq  \cdots \geq p_r$, $q_1\geq \cdots\geq q_s$, respectively.

Since $e$ is a nilpotent element in $\mbox{End}(V_{0}) \oplus \mbox{End}(V_{1})$, there exist $v_1,\ldots,v_r \in V_0$, $w_1,\ldots,w_s \in V_1$ such that $\{e^{j}v_i \mid 1\leq i\leq r,\ 0\leq j< p_i\}$ is a basis for $V_{0}$ and $\{e^{j}w_i \mid 1\leq i \leq s,\ 0\leq j< q_i\}$ is a basis for $V_{1}$, and $e^{p_i}v_i=0$, $e^{q_i}w_i=0$ by \cite{J}. Each element $Z\in \mathfrak{g}^{e}$ is determined by $Z(v_i), 1\leq i\leq r$ and $Z(w_i), 1\leq i \leq s$, since $Z(e^j v_i)=e^{j} Z(v_i)$ and $Z(e^j w_i)=e^{j} Z(w_i)$.

For $Z\in\mathfrak{g}^{e}_{\bar{0}}$ one has
\begin{equation}\label{evenz}
Z(v_i)=\sum_{j=1}^{r} \sum_{k=max\{0,p_j-p_i\}}^{p_j-1} \alpha_{k,j:i} e^{k} v_j, \hspace{.5cm}
Z(w_i)=\sum_{j=1}^{s} \sum_{k=max\{0,q_j-q_i\}}^{q_j-1} \beta_{k,j:i} e^{k} w_j
\end{equation}

For $Z\in\mathfrak{g}^{x}_{\bar{1}}$ one has
\begin{equation}\label{oddz}
Z(v_i)=\sum_{j=1}^{s} \sum_{k=max\{0,q_j-p_i\}}^{q_j-1} \gamma_{k,j:i} e^{k} w_j, \hspace{.5cm}
Z(w_i)=\sum_{j=1}^{r} \sum_{k=max\{0,p_i-q_j\}}^{p_i-1} \delta_{k,j:i} e^{k} v_j
\end{equation}

Since the coefficients $\alpha_{k,j:i}, \beta_{k,j:i}, \gamma_{k,j:i}, \delta_{k,j:i}$  can be chosen arbitrarily, the dimensions of $\mathfrak{g}^{e}_{\bar{0}}$ and $\mathfrak{g}^{e}_{\bar{1}}$ are determined by the number of indices. Hence by \cite{J, WZ}, we have
\begin{align*}\mbox{dim }\mathfrak{g}^{e}_{\bar{0}} &=\sum_{i,j=1}^{r} \mbox{min}(p_i,p_j) + \sum_{i,j=1}^{s} \mbox{min}(q_i,q_j)\\ &=\left(m+2\sum_{i=1}^{r}(i-1)p_i\right) +\left(n+2\sum_{j=1}^{s}(j-1)q_j\right),\\
\mbox{dim }\mathfrak{g}^{e}_{\bar{1}} &=2 \sum_{i,j=1}^{r,s} \mbox{min}(p_i,q_j).\end{align*}

The following lemma will be used in the proof of Theorem~\ref{thmgl}.

\begin{lemma}\label{glproof}
Let $\mathfrak{a}=\mathfrak{gl}(m)\times\mathfrak{gl}(n)$ and let
$i:\mathfrak{a}\hookrightarrow\mathfrak{gl}(m|n)$, $j:\mathfrak{a}\hookrightarrow\mathfrak{gl}(m+n)$ be the natural inclusion maps. Then $\mathfrak{gl}(m|n)$ and $\mathfrak{gl}(m+n)$ are isomorphic as $\mathfrak{a}$-modules under the adjoint action. Hence $\mbox{dim }\mathfrak{gl}(m|n)^{i(x)} = \mbox{dim }\mathfrak{gl}(m+n)^{j(x)}$ for all $x\in \mathfrak{a}$.
\end{lemma}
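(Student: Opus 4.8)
The plan is to produce an explicit isomorphism of $\mathfrak{a}$-modules, and the natural candidate is simply the identity on the common underlying vector space. First I would record that, as vector spaces, both $\mathfrak{gl}(m|n)$ and $\mathfrak{gl}(m+n)$ are equal to $M_{m+n,m+n}$; they differ only in the definition of the bracket (super versus ordinary) and in the $\mathbb{Z}/2\mathbb{Z}$-grading. Under both inclusions an element $x=(A,B)\in\mathfrak{a}$ is sent to the same block-diagonal matrix $\left(\begin{smallmatrix}A&0\\0&B\end{smallmatrix}\right)$, so $i(x)$ and $j(x)$ are literally equal as elements of $M_{m+n,m+n}$.

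The one point requiring care is to check that the two adjoint actions agree. Here I would use crucially that $i(x)$ is \emph{even}: by the definition of the super-bracket, for any homogeneous $Y$ we have $[\,i(x),Y\,]=i(x)Y-(-1)^{|i(x)|\,|Y|}Y\,i(x)=i(x)Y-Y\,i(x)$, because $|i(x)|=\bar{0}$ makes the sign trivial. This is exactly the ordinary matrix commutator, which is also the value of $\mathrm{ad}(j(x))$ on $Y$ inside $\mathfrak{gl}(m+n)$. Since $i(x)=j(x)$ as matrices, the linear operators $\mathrm{ad}(i(x))$ and $\mathrm{ad}(j(x))$ coincide on $M_{m+n,m+n}$ for every $x\in\mathfrak{a}$. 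Hence the identity map intertwines the two $\mathfrak{a}$-actions, giving the desired $\mathfrak{a}$-module isomorphism.

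The dimension statement then falls out immediately: the centralizer $\mathfrak{gl}(m|n)^{i(x)}$ is the kernel of $\mathrm{ad}(i(x))$, and the module isomorphism identifies this kernel with $\ker\,\mathrm{ad}(j(x))=\mathfrak{gl}(m+n)^{j(x)}$, so the two have equal dimension. I expect no real obstacle here; the entire content of the lemma is the observation that bracketing against an even element kills the super sign $(-1)^{|i(x)|\,|Y|}$, so that the super adjoint action of $\mathfrak{a}$ is indistinguishable from its ordinary adjoint action on the same matrices.
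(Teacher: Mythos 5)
Your proof is correct, and since the paper states this lemma without proof, your argument supplies exactly the evident justification it relies on: an element of $\mathfrak{a}$ embeds as the same even block-diagonal matrix in both algebras, and the super-bracket against an even element reduces to the ordinary commutator, so the identity on $M_{m+n,m+n}$ intertwines the two adjoint actions. The deduction about centralizer dimensions as kernels of the (coinciding) operators $\mathrm{ad}(i(x))=\mathrm{ad}(j(x))$ is likewise correct.
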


For $m\in\mathbb{N}$, let $Par(m)$ denote the set of partitions of $m$. Then for $m,n\in\mathbb{N}$ we have a natural map $\psi_{m,n}: Par(m)\times Par(n)\rightarrow Par(m+n)$.

\begin{lemma}If $e$ is a nilpotent element corresponding to the partition $(p,q)$ of $(m|n)$ and $\psi_{m,n}(p,q)=r=(r_1,\ldots,r_k)\in Par(m+n)$, then $\mbox{dim }\mathfrak{gl}(m|n)^{e}=\mathfrak{gl}(m+n)^{e}= (r_1^{*})^{2} +\cdots (r_N^{*})^{2}$, where $(r_1^{*},\ldots,r_N^{*})$ is the dual partition.
\end{lemma}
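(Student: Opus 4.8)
The plan is to reduce both asserted equalities to results already in hand, plus one elementary combinatorial identity. First I would note that since $e$ is an even nilpotent element with partition $(p,q)$ of $(m|n)$, it lies in the subalgebra $\mathfrak{a}=\mathfrak{gl}(m)\times\mathfrak{gl}(n)$; write $e=i(x)$ for the corresponding $x\in\mathfrak{a}$. Lemma~\ref{glproof} then gives $\dim\mathfrak{gl}(m|n)^{e}=\dim\mathfrak{gl}(m+n)^{j(x)}$ at once, so the only point to verify for the first equality is that $j(x)$ has Jordan type $r=\psi_{m,n}(p,q)$. This is immediate: $x$ acts on $V_{0}$ as a nilpotent of Jordan type $p$ and on $V_{1}$ as one of Jordan type $q$, so under $\mathfrak{a}\hookrightarrow\mathfrak{gl}(V_{0}\oplus V_{1})=\mathfrak{gl}(m+n)$ its Jordan blocks are the union of those coming from $p$ and from $q$, which by definition of $\psi_{m,n}$ is exactly the partition $r$.

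For the second equality I would invoke the centralizer-dimension formula for $\mathfrak{gl}(N)$, which is the single–vector-space ($s=0$) specialization of the computation already carried out in Section~\ref{secbas} (equivalently, the classical formula, cf.~\cite{J}): for a nilpotent of Jordan type $r=(r_{1},\ldots,r_{k})$,
\[ \dim\mathfrak{gl}(m+n)^{j(x)}=\sum_{a,b=1}^{k}\min(r_{a},r_{b}). \]
It then remains to establish the identity $\sum_{a,b}\min(r_{a},r_{b})=\sum_{l}(r_{l}^{*})^{2}$. I would prove this by counting over rows of the Young diagram: writing $\min(r_{a},r_{b})$ as the number of $l\geq 1$ with $l\leq r_{a}$ and $l\leq r_{b}$ and summing over all pairs gives
\[ \sum_{a,b=1}^{k}\min(r_{a},r_{b})=\sum_{l\geq 1}\bigl|\{(a,b):r_{a}\geq l,\ r_{b}\geq l\}\bigr|=\sum_{l\geq 1}(r_{l}^{*})^{2}, \]
since $r_{l}^{*}=|\{a:r_{a}\geq l\}|$ is precisely the $l$-th part of the dual partition. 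Combining the three displays proves that both $\dim\mathfrak{gl}(m|n)^{e}$ and $\dim\mathfrak{gl}(m+n)^{e}$ equal $(r_{1}^{*})^{2}+\cdots+(r_{N}^{*})^{2}$.

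All three ingredients are routine, and I do not expect a genuine obstacle. If anything requires care it is matching conventions: confirming that $j(x)$ has Jordan type $\psi_{m,n}(p,q)$ (so that the transfer in Lemma~\ref{glproof} lands on the correct nilpotent orbit), and checking that the dual partition indexed as $(r_{1}^{*},\ldots,r_{N}^{*})$ has $N=r_{1}$, so the sum $\sum_{l}(r_{l}^{*})^{2}$ terminates exactly at the largest part. The real content of the lemma is therefore the interlocking of Lemma~\ref{glproof}, which converts the super centralizer computation into an ordinary one, with the classical $\mathfrak{gl}(N)$ formula and the elementary dual-partition identity.
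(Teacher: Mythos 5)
Your proof is correct and follows the route the paper clearly intends: the lemma is stated immediately after Lemma~\ref{glproof} precisely so that the super centralizer dimension transfers to $\mathfrak{gl}(m+n)$, where the classical formula $\sum_{a,b}\min(r_a,r_b)=\sum_l (r_l^*)^2$ from \cite{J} applies (the paper leaves these steps, including the dual-partition identity you verified, implicit). Your attention to the conventions --- that $j(x)$ has Jordan type $\psi_{m,n}(p,q)$ and that $N=r_1$ --- is exactly the right care to take, and one could equally well obtain $\sum_{a,b}\min(r_a,r_b)$ by summing the displayed formulas for $\dim\mathfrak{g}^e_{\bar 0}$ and $\dim\mathfrak{g}^e_{\bar 1}$, but this is only a cosmetic variation.
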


\subsubsection{Centralizers of nilpotent even elements in $\mathfrak{osp}(m|2n)$}\label{sectionvarphi}
Now $\mathfrak{g}=\mathfrak{osp}(m|2n)\subset\mathfrak{gl}(m|2n)=\mbox{End}(V_{0}\oplus V_{1})$ is defined as follows.
Let $\varphi$ be a non-degenerate supersymmetric bilinear form on $V=V_{0}\oplus V_{1}$, so that $V_{0}$ and $V_{1}$ are orthogonal and the restriction to $V_0$ is symmetric while the restriction to $V_{1}$ is skew-symmetric.  Then  $\varphi(x,y) = (-1)^{\mathrm{deg}x\ \mathrm{deg}y} \varphi(y,x)$ for all homogeneous elements $x,y\in V$. Define
$$\mathfrak{osp}(m|2n)_{i}:=\{z\in\mathfrak{gl}(m|2n)_{i} \mid \varphi(z(x),y)= -(-1)^{i (\mathrm{deg}\ x)}\varphi(x,z(y))\}.$$

Let $e\in\mathfrak{g}$ be a nilpotent element in $\mathfrak{g}$ such that $e\in\mathfrak{g}_{\bar{0}}$.
Then $e$ corresponds to some orthosymplectic partition $(p,q)$ of $(m|n)$ given by  $p_1\geq p_2\geq \cdots \geq p_r >0$, $q_1\geq q_2\geq\cdots\geq q_s >0$.
Since $e \in \mbox{End}(V_{0}) \oplus \mbox{End}(V_{1})$, by \cite{J} there exist $v_1,\ldots,v_r \in V_0$, $w_1,\ldots,w_s \in V_1$ such that $\{e^{j}v_i \mid 1\leq i\leq r,\ 0\leq j< p_i\}$ is a basis for $V_{0}$ and $\{e^{j}w_i \mid 1\leq i \leq s,\ 0\leq j< q_i\}$ is a basis for $V_{1}$, $e^{p_i}v_i=0$, $e^{q_i}w_i=0$, and which satisfy the following:
\begin{itemize}
  \item for each odd $p_i$,
$$
\varphi(e^{j}v_i,e^{h}v_k)=\left\{
                             \begin{array}{ll}
                               (-1)^j, & \hbox{if $k=i$ and $j+h=p_i-1$;} \\
                               0, & \hbox{otherwise;}
                             \end{array}
                           \right.
$$
\item for each even $p_i$, there exists  $\mu_{i}\in\{0,1\}$ and index $i^{*}\neq i$, $1\leq i^{*}\leq r$, such that $p_{i^*}=p_i$ and
$$
\varphi(e^{j}v_i,e^{h}v_{k})=\left\{
                             \begin{array}{ll}
                               (-1)^j\mu_i, & \hbox{if $k=i^*$ and $j+h=p_i-1$;} \\
                               0, & \hbox{otherwise;}
                             \end{array}
                           \right.
$$
\item for each odd $q_i$, there exists  $\omega_{i}\in\{0,1\}$ and index  $i^{*}\neq i$, $1\leq i^{*}\leq s$, such that $q_{i^*}=q_i$ and
$$
\varphi(e^{j}w_i,e^{h}w_{k})=\left\{
                             \begin{array}{ll}
                               (-1)^j\omega_i, & \hbox{if $k=i^*$ and $j+h=q_i-1$;} \\
                               0, & \hbox{otherwise;}
                             \end{array}
                           \right.
$$
\item for each even $q_i$
$$
\varphi(e^{j}w_i,e^{h}w_k)=\left\{
                             \begin{array}{ll}
                               (-1)^j, & \hbox{if $k=i$ and $j+h=q_i-1$;} \\
                               0, & \hbox{otherwise.}
                             \end{array}
                           \right.
$$
\end{itemize}
Note that $\varphi(e^{j}v_i,e^{h}w_k)=0$ for all $h,i,j,k$.

Now $\mathfrak{g}^{e}=\mathfrak{g}\cap \mathfrak{gl}(m|2n)^{e}$, so let $Z\in\mathfrak{gl}(m|2n)^{e}$.
For $Z\in\mathfrak{g}^{e}_{\bar{0}}=\mathfrak{so}(m)^{e}\times\mathfrak{sp}(2n)^{e}$, the  coefficients $\alpha_{k,j:i}, \beta_{k,j:i}$ of $Z(v_i)$ and $Z(w_i)$ in (\ref{evenz}) satisfy certain conditions given in \cite[Section 3.2]{J}, and since $\mbox{dim }\mathfrak{g}^{e}_{\bar{0}}=\mbox{dim}(\mathfrak{so}(m)^{e}) + \mbox{dim}(\mathfrak{sp}(2n)^{e})$,
we have that $\mbox{dim }\mathfrak{g}^{e}_{\bar{0}}=$
$$
\left(\frac{m}{2}+ \sum_{i=1}^{r}(i-1)p_i -\frac{1}{2}|\{i\mid p_i\text{ odd}\}|\right)+
\left(\frac{n}{2}+ \sum_{j=1}^{s}(j-1)q_j+\frac{1}{2}|\{j\mid q_j\text{ odd}\}|\right).
$$
For $Z\in\mathfrak{g}^{e}_{\bar{1}}$ , the coefficients $\gamma_{k,j:i}$ of $Z(v_i)$ in  (\ref{oddz}) can be chosen freely, but then the coefficients $\delta_{k,j:i}$ of $Z(w_i)$ are completely determined from this choice.
So the dimension of $\mathfrak{osp}(m|2n)_{\bar{1}}^{e}$ is one-half the dimension of  $\mathfrak{gl}(m|2n)_{\bar{1}}^{e}$. Hence,
$$\mbox{dim }\mathfrak{g}^{e}_{\bar{1}} = \sum_{i,j=1}^{r,s} \mbox{min}(p_i,q_j).$$

\subsection{Centralizers of $\mathfrak{sl}_2$-triples}
Fix an $\mathfrak{sl}_2$-triple $\mathfrak{s}=\{e,f,h\}\subset\mathfrak{g}_{\bar{0}}'$ satisfying $[e,f]=h$, $[h,e]=2e$ and $[h,f]=-2f$.  It is uniquely determined up to conjugacy by the nilpotent element $e$ \cite{Kos}. Let
$\mathfrak{g}=\oplus_{j\in\mathbb{Z}}\mathfrak{g}(j)$ be the $\mathbb{Z}$-grading given by the eigenspaces of $\mbox{ad }h$. Then $\mathfrak{g}^{e}=\oplus_{j\geq 0}\mathfrak{g}^{e}(j)$ and $\mathfrak{g}^{\mathfrak{s}}=\mathfrak{g}^{e}(0)$.
The following lemma can be proven using the same argument as in \cite{J}.

\begin{lemma}
$\mathfrak{g}^{e}$ is the semidirect product of the subalgebra $\mathfrak{g}^{e}(0)=\mathfrak{g}^{\mathfrak{s}}$ and the ideal  $\oplus_{m> 0}\mathfrak{g}^{e}(m)$. This ideal consists of nilpotent elements.
\end{lemma}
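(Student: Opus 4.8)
The plan is to exploit the fact that $\mathfrak{g}^{e}$ is a graded subalgebra of $\mathfrak{g}$ and then read off all three assertions from the grading. Since $e$ is homogeneous, lying in $\mathfrak{g}_{\bar{0}}(2)$, the centralizer $\mathfrak{g}^{e}$ is a graded subalgebra by the observation in Section~\ref{section3}; thus $\mathfrak{g}^{e}=\bigoplus_{j}\mathfrak{g}^{e}(j)$ and the bracket respects the grading, $[\mathfrak{g}^{e}(i),\mathfrak{g}^{e}(j)]\subseteq\mathfrak{g}^{e}(i+j)$. We are already given that $\mathfrak{g}^{e}=\bigoplus_{j\geq 0}\mathfrak{g}^{e}(j)$ and $\mathfrak{g}^{e}(0)=\mathfrak{g}^{\mathfrak{s}}$. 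In particular $\mathfrak{g}^{e}(0)$ is a subalgebra, since $[\mathfrak{g}^{e}(0),\mathfrak{g}^{e}(0)]\subseteq\mathfrak{g}^{e}(0)$.

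Next I would set $\mathfrak{n}:=\bigoplus_{m>0}\mathfrak{g}^{e}(m)$ and verify that it is an ideal of $\mathfrak{g}^{e}$: for $x\in\mathfrak{g}^{e}(i)$ with $i\geq 0$ and $y\in\mathfrak{g}^{e}(m)$ with $m>0$, the graded bracket gives $[x,y]\in\mathfrak{g}^{e}(i+m)\subseteq\mathfrak{n}$ because $i+m>0$; by linearity $[\mathfrak{g}^{e},\mathfrak{n}]\subseteq\mathfrak{n}$. Combined with the vector-space decomposition $\mathfrak{g}^{e}=\mathfrak{g}^{e}(0)\oplus\mathfrak{n}$, the facts that $\mathfrak{g}^{e}(0)$ is a subalgebra, that $\mathfrak{n}$ is an ideal, and that $\mathfrak{g}^{e}(0)\cap\mathfrak{n}=0$ yield exactly the semidirect product decomposition $\mathfrak{g}^{e}=\mathfrak{g}^{e}(0)\ltimes\mathfrak{n}$.

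It remains to show that every element of $\mathfrak{n}$ is nilpotent, and this is the only point requiring an argument beyond bookkeeping. Let $x\in\mathfrak{n}$. Then $\mathrm{ad}\,x$ maps $\mathfrak{g}(j)$ into $\bigoplus_{k>j}\mathfrak{g}(k)$ for every $j$, so as a linear operator on $\mathfrak{g}$ it strictly raises the $\mathbb{Z}$-degree. Since $\mathfrak{g}$ is finite-dimensional, only finitely many graded pieces $\mathfrak{g}(j)$ are nonzero, say those with $|j|\leq N$; hence $(\mathrm{ad}\,x)^{2N+1}=0$, so $\mathrm{ad}\,x$ is nilpotent, which is precisely the statement that $x$ is a nilpotent element of $\mathfrak{g}$.

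The only place where care is needed is this last nilpotency claim, and the subtlety is merely that an element of $\mathfrak{n}$ need not be $\mathbb{Z}/2\mathbb{Z}$-homogeneous: it may have both an even and an odd component. This causes no difficulty, however, because I treat $\mathrm{ad}\,x$ simply as an ordinary degree-raising linear endomorphism of $\mathfrak{g}$ and invoke finite-dimensionality; parity plays no role in the argument. Everything else follows immediately from the graded-subalgebra structure of $\mathfrak{g}^{e}$, so I expect no genuine obstacle to arise.
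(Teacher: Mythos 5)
Your proof is correct and follows essentially the same route as the paper, which simply defers to the standard argument of Jantzen \cite{J}: the grading of $\mathfrak{g}^{e}$ induced by $\mathrm{ad}\,h$ is concentrated in non-negative degrees, the positive part is an ideal by degree bookkeeping, and its elements are ad-nilpotent because they strictly raise degree in a finite-dimensional graded algebra. Your remark that parity of the components is irrelevant to the nilpotency argument is accurate, and no gap remains.
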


Let $(p|q)$ be a partition of $(m|n)$.  Let $r\in Par(m+n)$ be the total ordering of the partitions $p$ and $q$. Let $r_1>\cdots >r_{N}$ be the set of distinct nonzero parts of $r$.  Write $r=(r_1^{m_{1}+n_{1}},\ldots,r_{N}^{m_{N}+n_N})$ where $p=(r_1^{m_{1}},\ldots,r_{N}^{m_N})$ and $q=(r_1^{n_{1}},\ldots,r_{N}^{n_N})$. Note that for each $i$, one of $m_{i}$, $n_{i}$ can be zero.

For each $t\in\mathbb{Z}_{+}$, let $M_t=\sum_{i:p_i=t}\mathbb{F}v_i+\sum_{i:q_i=t}\mathbb{F}w_i$, where $v_i$, $w_i$ are as given in Section~\ref{secbas}.

\begin{theorem}\label{thmgl} Let $\mathfrak{g}=\mathfrak{gl}(m|n)$.
Let $e$ be a nilpotent even element corresponding to a partition $(p,q)$ of $(m|n)$, and let $\mathfrak{s}=\{e,f,h\}\subset\mathfrak{g}_{\bar{0}}'$ be an $\mathfrak{sl}_2$-triple for $e$. Then we have an isomorphism
$\mathfrak{g}^{\mathfrak{s}}\iso \mathfrak{gl}(m_1,n_1)\times  \cdots\times \mathfrak{gl}(m_N,n_N)$ of Lie superalgebras.
\end{theorem}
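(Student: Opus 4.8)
The plan is to identify the Lie superalgebra $\mathfrak{g}^{\mathfrak{s}}$ with the algebra of $\mathfrak{s}$-module endomorphisms of the standard module $V=V_{0}\oplus V_{1}$, and then to decompose $V$ into $\mathfrak{s}$-isotypic components. Since $\mathfrak{g}=\mathrm{End}(V)$ and $\mathfrak{s}\subset\mathfrak{g}_{\bar 0}'$ acts on $V$ via the standard representation (preserving $V_{0}$ and $V_{1}$), an element $Z$ centralizes $\mathfrak{s}$ precisely when it is an $\mathfrak{s}$-module endomorphism; thus $\mathfrak{g}^{\mathfrak{s}}=\mathrm{End}_{\mathfrak{s}}(V)$ as super vector spaces, and in fact as associative superalgebras under composition, the Lie bracket being the super-commutator. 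First I would record the elementary $\mathfrak{sl}_2$-fact that on the finite-dimensional module $V$ an endomorphism commuting with $e$ and $h$ automatically commutes with $f$: applying $\mathrm{ad}\,e$ and $\mathrm{ad}\,h$ to $[Z,f]$ shows it is an endomorphism of $\mathrm{ad}\,h$-weight $-2$ commuting with $e$, which annihilates the lowest weight space and, since $V$ is generated from that space by $e$, therefore vanishes. Combined with the identity $\mathfrak{g}^{\mathfrak{s}}=\mathfrak{g}^{e}(0)$ recorded above, this confirms $\mathfrak{g}^{\mathfrak{s}}=\mathrm{End}_{\mathfrak{s}}(V)$.

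Next I would decompose $V$ as an $\mathfrak{s}\cong\mathfrak{sl}_2$-module. The strings $v_i,ev_i,\ldots,e^{p_i-1}v_i$ and $w_i,ew_i,\ldots,e^{q_i-1}w_i$ fixed in Section~\ref{secbas} are exactly the irreducible $\mathfrak{s}$-submodules of $V$, of dimensions $p_i$ and $q_i$. Grouping by dimension gives $V=\bigoplus_{i=1}^{N}V_{[r_i]}$, where $V_{[r_i]}$ is the isotypic component of the $r_i$-dimensional irreducible. Its lowest weight space (the kernel of $f$ inside $V_{[r_i]}$) is precisely the multiplicity space $M_{r_i}$, a super vector space whose even part is spanned by $\{v_j\mid p_j=r_i\}$ and whose odd part is spanned by $\{w_j\mid q_j=r_i\}$; hence $\dim M_{r_i}=m_i\mid n_i$, and $\{e^{k}x\mid x\in M_{r_i},\ 0\leq k<r_i\}$ is a basis of $V_{[r_i]}$.

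Finally I would define the restriction map $\Phi:\mathfrak{g}^{\mathfrak{s}}\to\bigoplus_{i=1}^{N}\mathrm{End}(M_{r_i})$ by $\Phi(Z)=(Z|_{M_{r_1}},\ldots,Z|_{M_{r_N}})$ and verify it is an isomorphism of superalgebras. It is well defined because $Z$ preserves $\ker f$ (it commutes with $f$) and preserves weight spaces (it commutes with $h$), so $Z$ maps each $M_{r_i}$ into itself. It is injective because $Z$ commutes with $e$ and $V$ is generated from $\bigoplus_i M_{r_i}$ by $e$, so $Z$ is determined by its restriction; it is surjective because any tuple $(\phi_i)$ extends to the $\mathfrak{s}$-endomorphism determined by $Z(e^{k}x)=e^{k}\phi_i(x)$ for $x\in M_{r_i}$, which commutes with $e$ and $h$, hence with $f$, and so lies in $\mathfrak{g}^{\mathfrak{s}}$. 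Since every element of $\mathfrak{g}^{\mathfrak{s}}$ preserves $\bigoplus_i M_{r_i}$, restriction is multiplicative, and it respects parity because the $\mathbb{Z}/2\mathbb{Z}$-grading of $V$ is carried by the multiplicity spaces; thus $\Phi$ is an isomorphism of associative superalgebras, hence of Lie superalgebras, and $\mathrm{End}(M_{r_i})\cong\mathfrak{gl}(m_i|n_i)$ yields the claim.

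The step requiring the most care is the parity bookkeeping in the last two paragraphs, which is where the super case genuinely departs from the classical one treated in \cite{J}. In the Lie algebra case each isotypic component contributes a single $\mathfrak{gl}(m_i)$; here the even strings (from $V_{0}$, counted by $p$) and the odd strings (from $V_{1}$, counted by $q$) fuse into one multiplicity super-space $M_{r_i}$ of dimension $m_i|n_i$, and the off-diagonal (odd) intertwiners between even and odd strings of equal length are exactly what upgrade $\mathfrak{gl}(m_i)\times\mathfrak{gl}(n_i)$ to $\mathfrak{gl}(m_i|n_i)$. Checking that these odd intertwiners are unobstructed --- equivalently, that $\mathrm{End}_{\mathfrak{s}}(V)$ realizes the full super structure of each $M_{r_i}$ --- is the crux, and it follows from the freedom in the coefficients $\gamma_{k,j:i}$ of (\ref{oddz}) once one passes to the degree-zero part.
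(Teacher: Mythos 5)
Your proof is correct and follows essentially the same route as the paper's: both establish the isomorphism by showing that restriction to the multiplicity spaces $M_t$ gives a bijection from $\mathfrak{g}^{\mathfrak{s}}=\mathfrak{g}^{e}(0)$ onto $\prod_t\mathfrak{gl}(M_t)$, where $M_t$ has super-dimension $(m_i|n_i)$. The only difference is presentational: where the paper reads off the stability of $M_t$ and the bijectivity of the restriction map from the explicit coefficient formulas (\ref{evenz})--(\ref{oddz}), you derive them self-containedly from the identification $\mathfrak{g}^{\mathfrak{s}}=\mathrm{End}_{\mathfrak{s}}(V)$ together with the $\mathfrak{sl}_2$-isotypic decomposition of $V$ (implicitly choosing the $v_i$, $w_i$ of Section~\ref{secbas} to be lowest weight vectors for $\mathfrak{s}$), which is a sound and arguably cleaner substitute.
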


\begin{proof}
If $Z\in\mathfrak{g}^{\mathfrak{s}}\subset\mathfrak{g}^{e}$, then a coefficient of $Z(v_i)$, $Z(w_i)$ in (\ref{evenz}), (\ref{oddz}) is zero unless $k=0$ and $p_i,q_i=p_j,q_j$. So $Z(M_t)\subset M_t$. Since $Z$ is determined by all the $Z(v_i)$, $Z(w_i)$, the map
\begin{equation}\label{themap}\mathfrak{g}^{e}(0)\rightarrow \mathfrak{gl}(M_1)\times\mathfrak{gl}(M_2)\times\mathfrak{gl}(M_3)\times\cdots\end{equation} defined by restriction is injective.  Since these coefficients can be chosen freely, this map is surjective.  The even (resp. odd) dimension of $M_t$ is the number of $p_i$ (resp. $q_i$) with $p_i=t$ (resp. $q_i=t$).
\end{proof}

Let $(p|q)$ be an orthosymplectic partition of $(m|2n)$.  Let $r$ (resp. $s$) be the total ordering of the odd parts (resp. even parts) of the partitions $p$ and $q$.  Write $r=(r_1^{m_{1}+2n_{1}},\ldots,r_{N}^{m_{N}+2n_N})$ and $s=(s_1^{2c_{1}+d_{1}},\ldots,s_{T}^{2c_{T}+d_T})$ where $p=(r_1^{m_{1}},\ldots,r_{N}^{m_N},s_1^{2c_{1}},\ldots,s_{T}^{2c_{T}})$ and $q=(r_1^{2n_{1}},\ldots,r_{N}^{2n_N},s_1^{d_{1}},\ldots,s_{T}^{d_T})$.

\begin{theorem}Let $\mathfrak{g}=\mathfrak{osp}(m|2n)$.
Let $e$ be a nilpotent even element corresponding to an orthosymplectic partition $(p,q)$ of $(m|n)$, and let $\mathfrak{s}=\{e,f,h\}\subset\mathfrak{g}_{\bar{0}}'$ be an $\mathfrak{sl}_2$-triple for $e$. Then we have an isomorphism
$$\mathfrak{g}^{\mathfrak{s}}\iso \mathfrak{osp}(m_1,2n_1)\times \cdots  \times \mathfrak{osp}(m_N,2n_N)\times\mathfrak{osp}(d_1,2c_1)\times   \cdots  \times \mathfrak{osp}(d_T,2c_T)$$
of Lie superalgebras.
\end{theorem}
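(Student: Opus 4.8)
The plan is to build on Theorem~\ref{thmgl} and impose the orthosymplectic condition one string-length at a time. Since $\mathfrak{s}\subset\mathfrak{osp}(m|2n)$ we have $\mathfrak{g}^{\mathfrak{s}}=\mathfrak{g}^{e}(0)=\mathfrak{osp}(m|2n)\cap\mathfrak{gl}(m|2n)^{\mathfrak{s}}$, and Theorem~\ref{thmgl} identifies $\mathfrak{gl}(m|2n)^{\mathfrak{s}}$ with $\prod_{t}\mathfrak{gl}(M_t)$ by restriction to the head spaces $M_t=\sum_{i:p_i=t}\mathbb{F}v_i+\sum_{i:q_i=t}\mathbb{F}w_i$; so it suffices to describe the image of $\mathfrak{osp}(m|2n)$ under this isomorphism. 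First I would read off from the four displayed formulas for $\varphi$ that $\varphi(e^{j}x,e^{h}y)=0$ unless $x,y$ are chosen string generators of the same length $t$ (that is, $p_i=p_k=t$ or $q_i=q_k=t$) with $j+h=t-1$. Hence the decomposition $V=\bigoplus_{t}V[t]$ into sums of $\mathfrak{s}$-strings of fixed length $t$ is $\varphi$-orthogonal, $\varphi$ restricts nondegenerately to each $V[t]$, and one obtains a nondegenerate even form $\varphi_t$ on $M_t$ by setting $\varphi_t(u,w)=\varphi(u,e^{t-1}w)$.

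Next I would determine the symmetry type of $\varphi_t$ on the even part $(M_t)_{\bar 0}$ (the $v_i$ with $p_i=t$) and the odd part $(M_t)_{\bar 1}$ (the $w_i$ with $q_i=t$). The cleanest bookkeeping uses that any $Z\in\mathfrak{gl}(m|2n)^{\mathfrak{s}}$ is an $\mathfrak{s}$-module morphism, so under $V[t]\cong M_t\otimes L(t-1)$ the form $\varphi$ factors as $\varphi_t$ tensored with the canonical invariant form on the irreducible $\mathfrak{s}$-module $L(t-1)$, the latter being symmetric for $t$ odd and skew for $t$ even. Comparing symmetry types then shows that $\varphi_t$ is symmetric on $(M_t)_{\bar 0}$ and skew on $(M_t)_{\bar 1}$ when $t$ is odd, and skew on $(M_t)_{\bar 0}$ and symmetric on $(M_t)_{\bar 1}$ when $t$ is even, with the two summands always $\varphi_t$-orthogonal. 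Since such a $Z$ acts as $Z|_{M_t}\otimes\mathrm{id}$ and $\mathrm{id}$ preserves the form on $L(t-1)$, the orthosymplectic condition $\varphi(Zx,y)=-(-1)^{(\mathrm{deg}\,Z)(\mathrm{deg}\,x)}\varphi(x,Zy)$ holds on $V[t]$ if and only if $Z|_{M_t}$ preserves $\varphi_t$. Thus $Z\in\mathfrak{osp}(m|2n)$ precisely when each restriction $Z|_{M_t}$ preserves $\varphi_t$, and the restriction isomorphism of Theorem~\ref{thmgl} carries $\mathfrak{g}^{\mathfrak{s}}$ onto $\prod_{t}\mathfrak{osp}(M_t,\varphi_t)$.

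Finally I would identify each factor. The stabilizer of any nondegenerate even form on a super vector space is an orthosymplectic Lie superalgebra whose orthogonal dimension equals the dimension of the symmetric summand and whose symplectic dimension equals the dimension of the skew summand, regardless of which summand is even and which is odd; concretely this is the parity-reversing identification $\mathfrak{gl}(a|b)\cong\mathfrak{gl}(b|a)$. For odd $t=r_i$ the symmetric summand is the even space of dimension $m_i$ and the skew summand is the odd space of dimension $2n_i$, giving $\mathfrak{osp}(m_i,2n_i)$. For even $t=s_i$ the roles are interchanged: the symmetric summand is the odd space of dimension $d_i$ and the skew summand is the even space of dimension $2c_i$, giving $\mathfrak{osp}(d_i,2c_i)$. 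Assembling the factors yields the asserted isomorphism, which is one of Lie superalgebras because the restriction maps of Theorem~\ref{thmgl} are already parity-preserving algebra homomorphisms. I expect the main obstacle to be exactly this symmetry-type bookkeeping in the even-$t$ case, where $\varphi_t$ is super-skew rather than super-symmetric, so that the orthogonal and symplectic dimensions must be read off from the odd and even summands respectively and the factor comes out as $\mathfrak{osp}(d_i,2c_i)$ with the parities interchanged.
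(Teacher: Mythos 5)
Your proposal is correct and follows essentially the same route as the paper: the head spaces $M_t$, the induced form $\varphi_t(u,w)=\varphi(u,e^{t-1}w)$ (the paper's $\psi_t$), its supersymmetry for odd $t$ and skew-supersymmetry for even $t$, and the parity-reversal needed to read off $\mathfrak{osp}(d_i,2c_i)$ in the even case are all exactly the paper's argument. The only difference is cosmetic: you derive the symmetry type and the equivalence of the orthosymplectic conditions from the factorization $V[t]\cong M_t\otimes L(t-1)$, where the paper does the corresponding direct computations with $e^{t-1}$ and leaves surjectivity as a routine check, so your packaging actually makes that last step slightly more explicit.
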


\begin{proof}
For each $t\in\mathbb{Z}_{+}$ define a bilinear form on $M_t$ by $\psi_t(x,y)=\varphi(x,e^{t-1}y)$.  Then $\psi_t$ is nondegenerate since for $v_i,v_k,w_i,w_k\in M_t$, we have $\psi_t(v_i,v_{k})=\delta_{k,i^{*}}\mu_i$, $\psi_t(w_i,w_{k})=\delta_{k,i^{*}}\omega_i$.   Moreover, $\psi_t$ is supersymmetric if $t$ is odd, and skew-supersymmetric if $t$ is even. Indeed, for homogeneous elements $x,y\in M_t$ we have
\begin{align*}
\psi_t(x,y)&=\varphi(x,e^{t-1}y)=(-1)^{(\mathrm{deg}\ x)(\mathrm{deg}\ y)} \varphi(e^{t-1}y,x)\\ &=(-1)^{(\mathrm{deg}\ x) (\mathrm{deg}\ y) +(t-1)}\varphi(y,e^{t-1}x)\\ &=(-1)^{(\mathrm{deg}\ x)(\mathrm{deg}\ y) +(t-1)}\psi_t(y,x).
\end{align*}
It is clear that $(M_t)_{0}$ is orthogonal to $(M_t)_1$ with respect to $\psi_t$ for all $t\in\mathbb{Z}_{+}$, since $e\in\mathfrak{g}_{\bar{0}}$.  Let $\Pi(N)$ be the superspace isomorphic to $N$ with switched parity. Then for each $t\in 2\mathbb{Z}_{+}$, the bilinear form $\psi_t:\Pi(M_t)\times\Pi(M_t)\rightarrow\mathbb{C}$ is supersymmetric.

If $Z\in\mathfrak{g}^{e}(0)_i$, then $Z(M_t)\subset M_t$, and for homogeneous $x,y\in M_t$ we have
\begin{align*}
\psi_t(Z(x),y)&=\varphi(Z(x),e^{t-1}y)=-(-1)^{i(\mathrm{deg}\ x)}\varphi(x,Z(e^{t-1}y))\\
&=-(-1)^{i(\mathrm{deg}\ x)}\varphi(x,e^{t-1}Z(y))\\ &=-(-1)^{i(\mathrm{deg}\ x)}\psi_t(x,Z(y))
\end{align*}
Hence, the homomorphism in (\ref{themap}) defines an injective map
\begin{equation*}\label{themap}\mathfrak{g}^{e}(0)\rightarrow \mathfrak{osp}(M_1)\times\mathfrak{osp}(\Pi(M_2))\times\mathfrak{osp}(M_3)\times\mathfrak{osp}(\Pi(M_4))
\times\mathfrak{osp}(M_5)\times\cdots.\end{equation*}
The fact that this map is surjective can be checked by direct computation. In particular, one should check that if $Z\in\mathfrak{gl}(m|2n)^{e}(0)_i$ satisfies $\psi_t(Z(x),y)=-(-1)^{i(\mathrm{deg}\ x)}\psi_t(x,Z(y))$ for all homogeneous $x,y\in M_t$ and for all $t\in\mathbb{Z}_{+}$, then $Z\in\mathfrak{osp}(m|2n)$.
\end{proof}

\section{Good $\mathbb{Z}$-gradings}\label{section4}

\subsection{Good $\mathbb{Z}$-gradings of Lie superalgebras}
\begin{defn}\label{gooddefinition}
Let $ \mathfrak{g}=\oplus_{j\in\mathbb{Z}} \mathfrak{g}(j)$  be a $\mathbb{Z}$-graded Lie superalgebra.  An element $e\in\mathfrak{g}_{\bar{0}}(2)$ is called {\em good} if the following properties hold:
\begin{equation}\label{good1} \mbox{ad }e: \mathfrak{g}(j)\rightarrow \mathfrak{g}(j+2) \text{ is injective for } j\leq -1; \end{equation}
\begin{equation}\label{good2} \mbox{ad }e: \mathfrak{g}(j)\rightarrow \mathfrak{g}(j+2) \text{ is surjective for } j\geq -1. \end{equation}
A $\mathbb{Z}$-grading of a Lie superalgebra $\mathfrak{g}$ is called {\em good} if it admits a good element.
\end{defn}

Clearly, (\ref{good1}) is equivalent to
\begin{equation}\label{good3} Ker(\mbox{ad }e) = \mathfrak{g}^{e} \subset \mathfrak{g}_{\geq},\end{equation}
and (\ref{good2}) is equivalent to
\begin{equation}\label{good4} \mathfrak{g}_{+} \subset Im(\mbox{ad }e) = [e,\mathfrak{g}].\end{equation}

\begin{lemma}\label{equiv}
Let $\mathfrak{g}$ be a $\mathbb{Z}$-graded Lie superalgebra. If $\mathfrak{g}$ is a basic Lie superalgebra or a semisimple Lie algebra, or if $\mathfrak{g}=\mathfrak{gl}(m|n)$ or $\mathfrak{sl}(n|n)$ and $\mathfrak{Z}(\mathfrak{g}_{\bar{0}})\subset\mathfrak{g}(0)$, then for $e\in\mathfrak{g}_{\bar{0}}(2)$ conditions (\ref{good1})-(\ref{good4}) are equivalent.
\end{lemma}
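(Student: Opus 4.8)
The plan is to dispatch the two trivial equivalences, reduce everything to the single implication (\ref{good1})$\Leftrightarrow$(\ref{good2}), and then run an adjointness argument built on the invariant form. The equivalences (\ref{good1})$\Leftrightarrow$(\ref{good3}) and (\ref{good2})$\Leftrightarrow$(\ref{good4}) are immediate from the definitions: since $e$ is homogeneous, $\mathfrak{g}^{e}=\ker(\mathrm{ad}\,e)$ is a graded subalgebra, so (\ref{good1}) says precisely that $\mathfrak{g}^{e}\cap\mathfrak{g}(j)=0$ for all $j\leq -1$, i.e. $\mathfrak{g}^{e}\subset\mathfrak{g}_{\geq}$, which is (\ref{good3}); dually (\ref{good2}) says $\mathfrak{g}(k)\subset[e,\mathfrak{g}]$ for all $k\geq 1$, i.e. $\mathfrak{g}_{+}\subset[e,\mathfrak{g}]$, which is (\ref{good4}). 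Thus it remains only to show (\ref{good1})$\Leftrightarrow$(\ref{good2}).

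The engine is the adjointness of $\mathrm{ad}\,e$ with respect to $(\cdot,\cdot)$. As $e$ is even, invariance gives $([x,e],z)=(x,[e,z])$, and since $[x,e]=-[e,x]$, we get $([e,x],z)=-(x,[e,z])$ for all $x,z$. By Lemma~\ref{pairing}, $(\mathfrak{g}(i),\mathfrak{g}(j))=0$ unless $i+j=0$, and for each $j$ the pairing $\mathfrak{g}(j)\times\mathfrak{g}(-j)$ is nondegenerate modulo $\mathfrak{Z}(\mathfrak{g})$. Hence, for every $j$, the maps $\mathrm{ad}\,e\colon\mathfrak{g}(j)\to\mathfrak{g}(j+2)$ and $\mathrm{ad}\,e\colon\mathfrak{g}(-j-2)\to\mathfrak{g}(-j)$ are, up to sign, transposes of one another. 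When the form is actually nondegenerate — this covers semisimple Lie algebras and basic Lie superalgebras, where $\mathfrak{Z}(\mathfrak{g})=0$, as well as $\mathfrak{gl}(m|n)$, where the supertrace form is nondegenerate — every pairing $\mathfrak{g}(j)\times\mathfrak{g}(-j)$ is perfect (for $j\leq -1$ neither degree is $0$, and for $j=0$ the restriction to $\mathfrak{g}(0)$ is perfect since the radical is trivial, resp. $I$ still pairs nontrivially in $\mathfrak{gl}$). A linear map between finite-dimensional spaces with perfect pairings is injective iff its transpose is surjective, so $\mathrm{ad}\,e|_{\mathfrak{g}(j)}$ is injective iff $\mathrm{ad}\,e|_{\mathfrak{g}(-j-2)}$ is surjective. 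Letting $j$ range over $j\leq -1$, the transpose index $-j-2$ ranges over $-j-2\geq -1$, converting (\ref{good1}) into (\ref{good2}) exactly. (At $j=-1$ the two maps coincide, being anti-self-adjoint on equal-dimensional spaces, so injective iff surjective.)

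The main obstacle is the one case where the form degenerates, namely $\mathfrak{g}=\mathfrak{sl}(n|n)$, whose radical is $\mathfrak{Z}(\mathfrak{g})=\mathbb{C}I\subset\mathfrak{g}(0)$. Every transpose identification above stays perfect except at the interface between $\mathrm{ad}\,e\colon\mathfrak{g}(-2)\to\mathfrak{g}(0)$ and $\mathrm{ad}\,e\colon\mathfrak{g}(0)\to\mathfrak{g}(2)$, where the pairing on $\mathfrak{g}(0)$ has radical $\mathbb{C}I$. Chasing the transpose through this degeneracy, surjectivity of $\mathrm{ad}\,e\colon\mathfrak{g}(0)\to\mathfrak{g}(2)$ becomes the condition $\{x\in\mathfrak{g}(-2)\mid [e,x]\in\mathbb{C}I\}=0$, whereas injectivity of $\mathrm{ad}\,e\colon\mathfrak{g}(-2)\to\mathfrak{g}(0)$ is the weaker-looking $\ker(\mathrm{ad}\,e|_{\mathfrak{g}(-2)})=0$. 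I would close the gap by showing $\mathbb{C}I\cap[e,\mathfrak{g}]=0$: for $x\in\mathfrak{g}(-2)$ the bracket $[e,x]=ex-xe$ is a commutator of endomorphisms of the natural module and so has ordinary trace $0$, while $cI$ has ordinary trace $2nc$; hence $[e,x]\in\mathbb{C}I$ forces $[e,x]=0$. Therefore the two conditions coincide, the $\mathfrak{sl}(n|n)$ interface also gives injectivity $\Leftrightarrow$ surjectivity, and the equivalence of (\ref{good1})--(\ref{good4}) follows in all cases.
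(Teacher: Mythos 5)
Your proof is correct and follows essentially the same route as the paper, which simply invokes Lemma~\ref{pairing} together with the duality argument of Elashvili--Kac [EK, Theorem 1.3] --- i.e.\ exactly the adjointness of $\mathrm{ad}\,e$ with respect to the invariant form that you spell out. Your explicit treatment of the degenerate interface at $\mathfrak{g}(0)$ for $\mathfrak{sl}(n|n)$ (closing the gap via the ordinary-trace argument showing $\mathbb{C}I\cap[e,\mathfrak{g}]=0$) is a detail the paper leaves implicit in the citation, and is a worthwhile addition.
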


\begin{proof} If  $\mathfrak{g}=\mathfrak{gl}(m|n)$, we may restrict to $\mathfrak{sl}(m|n)$ since $\mathfrak{Z}(\mathfrak{g}_{\bar{0}})\subset\mathfrak{g}(0)$.
By Lemma~\ref{pairing}, the proof of \cite[Theorem 1.3]{EK} proves (\ref{good1}) $\Leftrightarrow$ (\ref{good2})
 \end{proof}

\begin{lemma}\label{evengood} Let $\mathfrak{g}$ be a $\mathbb{Z}$-graded Lie superalgebra.
If  $ \mathfrak{g}=\oplus_{j\in\mathbb{Z}} \mathfrak{g}(j)$ is a good grading for an element $e\in\mathfrak{g}_{\bar{0}}(2)$, then the induced grading of $ \mathfrak{g}_{\bar{0}}$ is a good grading for $e$.  Moreover, if $\mathfrak{g}$ is a basic Lie superalgebra, then $\mathfrak{Z}(\mathfrak{g})\subset\mathfrak{g}(0)$, $\mathfrak{Z}(\mathfrak{g}_{\bar{0}})\subset\mathfrak{g}_{\bar{0}}(0)$ and $e\in\mathfrak{g}'_{\bar{0}}(2)=\mathfrak{g}_{\bar{0}}(2)$.
\end{lemma}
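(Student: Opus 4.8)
The plan is to separate the statement into its two logically independent parts: the assertion about the induced grading of $\mathfrak{g}_{\bar 0}$, which holds for an \emph{arbitrary} $\mathbb{Z}$-graded Lie superalgebra, and the three center/membership facts, which use that $\mathfrak{g}$ is basic.

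For the first part, the key observation is that $e$ is even, so $\mathrm{ad}\,e$ preserves parity and therefore restricts to maps $\mathrm{ad}\,e:\mathfrak{g}_{\bar 0}(j)\to\mathfrak{g}_{\bar 0}(j+2)$ and $\mathrm{ad}\,e:\mathfrak{g}_{\bar 1}(j)\to\mathfrak{g}_{\bar 1}(j+2)$ separately, where $\mathfrak{g}_{\bar 0}(j)=\mathfrak{g}_{\bar 0}\cap\mathfrak{g}(j)$ is the induced grading. Injectivity (\ref{good1}) for $j\le -1$ passes immediately to the subspace $\mathfrak{g}_{\bar 0}(j)\subset\mathfrak{g}(j)$. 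For surjectivity (\ref{good2}) with $j\ge -1$, I would take $y\in\mathfrak{g}_{\bar 0}(j+2)$, lift it to some $x\in\mathfrak{g}(j)$ with $[e,x]=y$, and then project onto even parity: writing $x=x_{\bar 0}+x_{\bar 1}$ with $x_{\bar 0}\in\mathfrak{g}_{\bar 0}(j)$ and $x_{\bar 1}\in\mathfrak{g}_{\bar 1}(j)$, one has $[e,x]=[e,x_{\bar 0}]+[e,x_{\bar 1}]$ with summands of opposite parity, so uniqueness of the parity decomposition forces $y=[e,x_{\bar 0}]$. Thus $x_{\bar 0}$ is an even preimage, and the induced grading of $\mathfrak{g}_{\bar 0}$ is good for $e$.

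For the second part, suppose $\mathfrak{g}$ is basic, hence simple and finite-dimensional. Then $\mathfrak{Z}(\mathfrak{g})=0\subset\mathfrak{g}(0)$ is immediate. The containment $\mathfrak{Z}(\mathfrak{g}_{\bar 0})\subset\mathfrak{g}(0)$ is exactly the Remark following Lemma~\ref{CSA} (elements of $\mathfrak{Z}(\mathfrak{g}_{\bar 0})$ are $\mathrm{ad}$-semisimple while elements of $\mathfrak{g}(j)$, $j\neq 0$, are $\mathrm{ad}$-nilpotent); intersecting with $\mathfrak{g}_{\bar 0}$ gives $\mathfrak{Z}(\mathfrak{g}_{\bar 0})\subset\mathfrak{g}_{\bar 0}(0)$.

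Finally, since $e\in\mathfrak{g}(2)$ and $\mathrm{ad}\,e$ raises degree by $2$ on the finite-dimensional $\mathfrak{g}$, the element $e$ is nilpotent, so by the discussion at the start of Section~\ref{section2} we have $e\in\mathfrak{g}'_{\bar 0}$. Combining $\mathfrak{g}_{\bar 0}=\mathfrak{g}'_{\bar 0}\times\mathfrak{Z}(\mathfrak{g}_{\bar 0})$ with $\mathfrak{Z}(\mathfrak{g}_{\bar 0})\subset\mathfrak{g}_{\bar 0}(0)$, the degree-$2$ component of the center vanishes, so $\mathfrak{g}_{\bar 0}(2)=\mathfrak{g}'_{\bar 0}(2)$, whence $e\in\mathfrak{g}'_{\bar 0}(2)=\mathfrak{g}_{\bar 0}(2)$. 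None of these steps presents a genuine obstacle; the only point requiring care is the parity-projection argument for surjectivity, where one must exploit the directness of the $\mathbb{Z}/2\mathbb{Z}$-grading rather than merely restricting the surjection $\mathrm{ad}\,e$ to $\mathfrak{g}_{\bar 0}(j)$.
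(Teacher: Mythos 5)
Your proof is correct. For the main assertion --- that the induced grading of $\mathfrak{g}_{\bar 0}$ is good --- you follow essentially the paper's route: $\mathrm{ad}\,e$ preserves parity, so injectivity restricts to $\mathfrak{g}_{\bar 0}(j)$ and surjectivity splits along the parity decomposition; your explicit parity-projection of a preimage is just the detail behind the paper's one-line observation that $\mathrm{ad}\,e:\mathfrak{g}(j)\to\mathfrak{g}(j+2)$ is surjective (resp.\ injective) if and only if it is so on both $\mathfrak{g}_{\bar 0}(j)$ and $\mathfrak{g}_{\bar 1}(j)$. For the ``moreover'' clause you diverge slightly: the paper deduces the degree-zero containments from the goodness conditions via Lemma~\ref{equiv} (a central element lies in $\mathfrak{g}^{e}\subset\mathfrak{g}_{\geq}$, and its positive-degree part would lie in $\mathrm{Im}(\mathrm{ad}\,e)$, which meets the center trivially), whereas you invoke simplicity to get $\mathfrak{Z}(\mathfrak{g})=0$ and the semisimple-versus-nilpotent argument of the Remark following Lemma~\ref{CSA} to place $\mathfrak{Z}(\mathfrak{g}_{\bar 0})$ in degree zero. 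Both arguments are legitimate and both live inside the paper; your version has the minor advantage of showing that $\mathfrak{Z}(\mathfrak{g}_{\bar 0})\subset\mathfrak{g}_{\bar 0}(0)$ and $e\in\mathfrak{g}'_{\bar 0}(2)$ hold for an arbitrary $\mathbb{Z}$-grading of a basic Lie superalgebra with $e$ a nilpotent even element of degree $2$, not only for good ones, while the paper's version stays entirely within the goodness formalism. Your remaining steps ($e$ is nilpotent because $\mathrm{ad}\,e$ raises degree on a finite-dimensional algebra, hence $e\in\mathfrak{g}'_{\bar 0}$; and $\mathfrak{g}_{\bar 0}(2)=\mathfrak{g}'_{\bar 0}(2)$ because $\mathfrak{g}'_{\bar 0}$ is a graded subalgebra and the center sits in degree zero) are sound.
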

\begin{proof}
Now  $\mbox{ad }e$ preserves parity since $e\in\mathfrak{g}_{\bar{0}}$, i.e. $(\mbox{ad }e)(\mathfrak{g}_{\bar{0}})\subset \mathfrak{g}_{\bar{0}} $, $(\mbox{ad }e)(\mathfrak{g}_{\bar{1}})\subset \mathfrak{g}_{\bar{1}} $.  So the map
$\mbox{ad }e: \mathfrak{g}(j)\rightarrow \mathfrak{g}(j+2)$
 is surjective (resp. injective) if and only if the maps
$\mbox{ad }e: \mathfrak{g}_{\bar{0}}(j)\rightarrow \mathfrak{g}_{\bar{0}}(j+2)$,
 $\mbox{ad }e: \mathfrak{g}_{\bar{1}}(j)\rightarrow \mathfrak{g}_{\bar{1}}(j+2) $
are both surjective (resp. injective).  In particular, if the $\mathbb{Z}$-grading of $\mathfrak{g}$ is a good grading for $e$, then the induced grading of $\mathfrak{g}_{\bar{0}}$ is a good grading for $e$.
The second claim now follows from Lemma~\ref{equiv} since $\mathfrak{Z}(\mathfrak{g})\subset \mathfrak{g}^{e}\subset \mathfrak{g}_{\geq 0}$ and
 $(\mathfrak{Z}(\mathfrak{g})\cap \mathfrak{g}_{+}) \subset (\mathfrak{Z}(\mathfrak{g})\cap Im(\mbox{ad }e)) = 0$.
\end{proof}

The proofs of the following lemmas are straightforward.

\begin{lemma}\label{reductive}
Let $\mathfrak{a}=\mathfrak{a}'\times\mathfrak{Z}(\mathfrak{a})$ be a reductive Lie algebra. Then  $ \mathfrak{a}=\oplus_{j\in\mathbb{Z}} \mathfrak{a}(j)$ is good $\mathbb{Z}$-grading for $e\in\mathfrak{a}(2)$ if and only if $\mathfrak{Z}(\mathfrak{a})\subset\mathfrak{a}(0)$, $e\in\mathfrak{a}'(2)$ and $\mathfrak{a}'=\oplus_{j\in\mathbb{Z}} \mathfrak{a}'(j)$ is a good $\mathbb{Z}$-grading for $e$.
\end{lemma}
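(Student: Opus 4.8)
The plan is to exploit two features of a reductive Lie algebra that make the center essentially invisible to $\mathrm{ad}\,e$. First, since $e\in\mathfrak{a}(2)$ we have $\mathrm{Im}(\mathrm{ad}\,e)=[e,\mathfrak{a}]\subset[\mathfrak{a},\mathfrak{a}]=\mathfrak{a}'$, so the image of $\mathrm{ad}\,e$ never meets $\mathfrak{Z}(\mathfrak{a})$. Second, every central element commutes with $e$, so $\mathfrak{Z}(\mathfrak{a})\subset\mathfrak{a}^{e}=\mathrm{Ker}(\mathrm{ad}\,e)$. Throughout I will use that $\mathfrak{a}'$ and $\mathfrak{Z}(\mathfrak{a})$ are graded subalgebras, giving $\mathfrak{a}(j)=\mathfrak{a}'(j)\oplus\mathfrak{Z}(\mathfrak{a})(j)$, together with the definitional reformulations (\ref{good3}) of (\ref{good1}) and (\ref{good4}) of (\ref{good2}), which hold for any $\mathbb{Z}$-graded Lie algebra.

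For the forward direction, assume the grading of $\mathfrak{a}$ is good for $e$. Condition (\ref{good3}) gives $\mathfrak{Z}(\mathfrak{a})\subset\mathfrak{a}^{e}\subset\mathfrak{a}_{\geq}$, so $\mathfrak{Z}(\mathfrak{a})(j)=0$ for $j<0$. Condition (\ref{good4}) gives $\mathfrak{a}_{+}\subset[e,\mathfrak{a}]\subset\mathfrak{a}'$; since $\mathfrak{a}'\cap\mathfrak{Z}(\mathfrak{a})=0$ in a reductive Lie algebra, this forces $\mathfrak{Z}(\mathfrak{a})(j)=0$ for $j>0$ as well. Hence $\mathfrak{Z}(\mathfrak{a})\subset\mathfrak{a}(0)$, and then $\mathfrak{a}(2)=\mathfrak{a}'(2)$, so $e\in\mathfrak{a}'(2)$. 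It remains to check that $\mathfrak{a}'=\oplus_{j}\mathfrak{a}'(j)$ is good for $e$. For $j\notin\{0,-2\}$ the map $\mathrm{ad}\,e\colon\mathfrak{a}'(j)\to\mathfrak{a}'(j+2)$ coincides with $\mathrm{ad}\,e\colon\mathfrak{a}(j)\to\mathfrak{a}(j+2)$ and inherits the needed injectivity (for $j\leq -1$) or surjectivity (for $j\geq -1$) from goodness of $\mathfrak{a}$. The two remaining cases are handled by the opening observations: at $j=-2$ the source is $\mathfrak{a}(-2)=\mathfrak{a}'(-2)$ and the image of $\mathrm{ad}\,e$ lies in $\mathfrak{a}'(0)$, so injectivity on $\mathfrak{a}$ descends to $\mathfrak{a}'$; at $j=0$ we have $\mathfrak{a}(0)=\mathfrak{a}'(0)\oplus\mathfrak{Z}(\mathfrak{a})$ with $\mathfrak{Z}(\mathfrak{a})$ killed by $\mathrm{ad}\,e$, so surjectivity onto $\mathfrak{a}(2)=\mathfrak{a}'(2)$ already holds using only $\mathfrak{a}'(0)$.

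For the converse, assume $\mathfrak{Z}(\mathfrak{a})\subset\mathfrak{a}(0)$, $e\in\mathfrak{a}'(2)$, and that $\mathfrak{a}'$ is good for $e$. Then $\mathfrak{a}(j)=\mathfrak{a}'(j)$ for $j\neq 0$ while $\mathfrak{a}(0)=\mathfrak{a}'(0)\oplus\mathfrak{Z}(\mathfrak{a})$, and I run the same case analysis in reverse. For $j\notin\{0,-2\}$ the map on $\mathfrak{a}$ equals the map on $\mathfrak{a}'$. At $j=-2$ one needs injectivity of $\mathrm{ad}\,e\colon\mathfrak{a}(-2)\to\mathfrak{a}(0)$; since $\mathfrak{a}(-2)=\mathfrak{a}'(-2)$ and the restriction to $\mathfrak{a}'$ is already injective, enlarging the target from $\mathfrak{a}'(0)$ to $\mathfrak{a}(0)$ changes nothing. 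At $j=0$ one needs surjectivity of $\mathrm{ad}\,e\colon\mathfrak{a}(0)\to\mathfrak{a}(2)=\mathfrak{a}'(2)$; as $\mathrm{ad}\,e$ annihilates the central summand, its image equals that of $\mathrm{ad}\,e|_{\mathfrak{a}'(0)}$, which is all of $\mathfrak{a}'(2)$ by goodness of $\mathfrak{a}'$. I expect the only subtlety to be precisely this bookkeeping at the indices $j=0$ and $j=-2$, where the center enters $\mathfrak{a}(0)$; once one records that $\mathrm{ad}\,e$ kills $\mathfrak{Z}(\mathfrak{a})$ and lands in $\mathfrak{a}'$, every case is immediate, consistent with the remark that the lemma is straightforward.
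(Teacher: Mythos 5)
Your proof is correct and complete; the paper itself omits the argument, stating only that it is straightforward, so there is no authorial proof to diverge from. Your case analysis at $j=0$ and $j=-2$, based on the observations that $\mathrm{Im}(\mathrm{ad}\,e)\subset\mathfrak{a}'$ and $\mathfrak{Z}(\mathfrak{a})\subset\mathrm{Ker}(\mathrm{ad}\,e)$ together with the reformulations (\ref{good3}) and (\ref{good4}), is exactly the intended routine verification.
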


\begin{lemma}\label{semisimple}
Let $\mathfrak{a}$ be a semisimple Lie algebra, and let $\mathfrak{a}=I_1 \oplus\cdots\oplus I_k$ be the unique decomposition of $\mathfrak{a}$ into ideals such that $I_i$ are simple as Lie algebras. Let $e\in\mathfrak{a}(2)$, and write $e= e_1 + \cdots + e_k$ where $e_i\in I_i$.
Then $\mathfrak{a}=\oplus_{j\in\mathbb{Z}} \mathfrak{a}(j)$ is a good (Dynkin) grading for $e$ if and only if for each $i$ the induced grading $I_i=\oplus_{j\in\mathbb{Z}} I_i(j)$ is a good (Dynkin) grading for the element $e_i$.
\end{lemma}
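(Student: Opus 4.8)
The plan is to reduce everything to the block-diagonal structure coming from the ideal decomposition. First I would note that the grading derivation $\phi\colon \sum_j x_j \mapsto \sum_j j x_j$ is a derivation of $\mathfrak{a}$, and since $\mathfrak{a}$ is semisimple all its derivations are inner, so $\phi = \mathrm{ad}\,h$ for some $h\in\mathfrak{a}$. Because each $I_i$ is an ideal we have $[h,I_i]\subset I_i$, so $\phi$ preserves every $I_i$; hence each $I_i$ is a graded subalgebra and $\mathfrak{a}(j)=\bigoplus_{i=1}^{k} I_i(j)$ for all $j$. In particular $e_i\in I_i(2)$, so each $e_i$ sits in degree $2$ of the induced grading, which is the prerequisite for either notion to make sense on $I_i$.

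Next I would exploit that distinct simple ideals commute, $[I_i,I_{i'}]=0$ for $i\neq i'$. Consequently, for $x\in I_{i'}$ one has $[e,x]=[e_{i'},x]$, so the map $\mathrm{ad}\,e\colon \mathfrak{a}(j)\to\mathfrak{a}(j+2)$ is the direct sum $\bigoplus_i\bigl(\mathrm{ad}\,e_i\colon I_i(j)\to I_i(j+2)\bigr)$ of the corresponding maps on the ideals. A block-diagonal linear map is injective (respectively surjective) if and only if each of its diagonal blocks is. Applying this with $j\leq -1$ for the injectivity condition (\ref{good1}) and with $j\geq -1$ for the surjectivity condition (\ref{good2}) shows at once that $e$ is good for the grading of $\mathfrak{a}$ if and only if each $e_i$ is good for the induced grading of $I_i$. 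This settles the \emph{good} version.

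For the \emph{Dynkin} version I would pass between $\mathfrak{sl}_2$-triples on $\mathfrak{a}$ and on the $I_i$. If the grading of $\mathfrak{a}$ is Dynkin for $e$, choose an $\mathfrak{sl}_2$-triple $\{e,f,h\}$ with $h$ defining the grading, and write $f=\sum_i f_i$, $h=\sum_i h_i$ with $f_i,h_i\in I_i$ (again $\phi=\mathrm{ad}\,h$ preserves each $I_i$, so $h_i\in I_i$). Comparing $I_i$-components in $[e,f]=h$, $[h,e]=2e$, $[h,f]=-2f$ and using that cross-brackets vanish, I obtain $[e_i,f_i]=h_i$, $[h_i,e_i]=2e_i$, $[h_i,f_i]=-2f_i$, so $\{e_i,f_i,h_i\}$ is an $\mathfrak{sl}_2$-triple in $I_i$; since $\mathrm{ad}\,h|_{I_i}=\mathrm{ad}\,h_i$, the element $h_i$ defines the induced grading of $I_i$, which is therefore Dynkin for $e_i$. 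Conversely, from Dynkin triples $\{e_i,f_i,h_i\}$ in each $I_i$ defining the induced gradings, I set $f:=\sum_i f_i$ and $h:=\sum_i h_i$; commutation of the ideals makes $\{e,f,h\}$ an $\mathfrak{sl}_2$-triple, and $\mathrm{ad}\,h$ acts on $\mathfrak{a}(j)=\bigoplus_i I_i(j)$ with eigenvalue $j$, so the grading of $\mathfrak{a}$ is Dynkin for $e$.

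The computations are all routine; the only point needing care, which I expect to be the sole subtlety, is the degenerate case $e_i=0$ in the Dynkin direction. There one must check that $h_i$ and $f_i$ vanish as well, which is forced by $h_i=[e_i,f_i]=0$ and then $0=[h_i,f_i]=-2f_i$, so that the induced grading on that ideal is trivial, exactly as it must be for the zero element.
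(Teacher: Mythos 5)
Your proof is correct and is exactly the argument the paper has in mind when it declares this lemma ``straightforward'' and omits the proof: the grading derivation preserves each simple ideal, so $\mathfrak{a}(j)=\bigoplus_i I_i(j)$ and $\mathrm{ad}\,e$ is block-diagonal, and the $\mathfrak{sl}_2$-triples decompose componentwise. Your handling of the degenerate case $e_i=0$ (forcing $h_i=f_i=0$ and the trivial grading on that ideal) is the one detail worth having written down explicitly.
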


\subsection{Good $\mathbb{Z}$-gradings of basic Lie superalgebras}\label{sectionbasic}

Let $\mathfrak{g}$ be a basic Lie superalgebra,  $\mathfrak{gl}(m|n)$ or $\mathfrak{sl}(n|n)$. Then $\mathfrak{g}_{\bar{0}}=\mathfrak{g}_{\bar{0}}'\oplus\mathfrak{Z}(\mathfrak{g}_{\bar{0}})$ is a reductive Lie algebra.
A $\mathbb{Z}$-grading is called a {\em Dynkin grading} if it is defined by $\mbox{ad }h$, where $h$ belongs to an $\mathfrak{sl}_{2}$-triple $\mathfrak{s}=\{e,f,h\}$ satisfying $[e,f]=h$, $[h,e]=2e$ and $[h,f]=-2f$. By $\mathfrak{sl}_{2}$ theory, $e$ is a good element for this $\mathbb{Z}$-grading. Hence, all Dynkin gradings are good.  Moreover, every nilpotent even element has a good grading.
Indeed, we can apply the Jacobson-Morosov Theorem to $\mathfrak{g}_{\bar{0}}'$, since elements of $\mathfrak{Z}(\mathfrak{g}_{\bar{0}})$ are semisimple.

\begin{theorem}\label{commutes}
Let $\mathfrak{g}$ be a basic Lie superalgebra, $\mathfrak{gl}(m|n)$ or $\mathfrak{sl}(n|n)$.
Let $\mathfrak{g}=\oplus_{j\in\mathbb{Z}} \mathfrak{g}(j)$ be a good $\mathbb{Z}$-grading for $e\in\mathfrak{g}_{\bar{0}}(2)$, and let $\mathfrak{s}=\{e,f,h\}$ be an $\mathfrak{sl}_2$-triple containing $e$. Then $\mathfrak{g}^{\mathfrak{s}} \subset \mathfrak{g}(0)$ and  $\mathfrak{g}_{\bar{0}}^{\mathfrak{s}} \subset \mathfrak{g}_{\bar{0}}(0)$.
\end{theorem}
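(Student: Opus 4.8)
The plan is to combine the one-sided information coming from goodness of $e$ with the symmetry of the invariant bilinear form on the centralizer. First I would record the easy inclusion: since $\mathfrak{s}=\{e,f,h\}$ contains $e$, we have $\mathfrak{g}^{\mathfrak{s}}\subseteq\mathfrak{g}^{e}$, and goodness of $e$ in the form (\ref{good3}) gives $\mathfrak{g}^{e}\subseteq\mathfrak{g}_{\geq}$. Hence every element of $\mathfrak{g}^{\mathfrak{s}}$ has only nonnegative degrees, and it suffices to rule out strictly positive degrees.

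To make the grading available on the centralizer I would next arrange the triple to be homogeneous. Let $H\in\mathfrak{g}_{\bar0}(0)$ be the element defining the grading, so $[H,x]=jx$ for $x\in\mathfrak{g}(j)$; such an $H$ exists because $\mathfrak{Z}(\mathfrak{g})\subseteq\mathfrak{g}(0)$ by Lemma~\ref{evengood}. Since $e\in\mathfrak{g}(2)$ we have $[H,e]=2e$, and because the grading is good one can take the $\mathfrak{sl}_2$-triple with $h\in\mathfrak{g}(0)$ and $f\in\mathfrak{g}(-2)$; equivalently $D:=H-h$ is a semisimple element of $\mathfrak{g}(0)$ commuting with all of $\mathfrak{s}$, so that $\mathrm{ad}\,H=\mathrm{ad}\,D$ on $\mathfrak{g}^{\mathfrak{s}}$. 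With $e,f,h$ all homogeneous, $\mathfrak{g}^{\mathfrak{s}}=\mathfrak{g}^{e}\cap\mathfrak{g}^{f}\cap\mathfrak{g}^{h}$ is a graded subalgebra, so $\mathfrak{g}^{\mathfrak{s}}=\bigoplus_{j\geq0}\mathfrak{g}^{\mathfrak{s}}(j)$ with $\mathfrak{g}^{\mathfrak{s}}(j)=\mathfrak{g}^{\mathfrak{s}}\cap\mathfrak{g}(j)$.

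The final step uses the form. By the centralizer theorems of Section~\ref{section2}, $\mathfrak{g}^{\mathfrak{s}}$ is a product of basic Lie superalgebras (resp. reductive Lie algebras), and, being the trivial $\mathfrak{s}$-isotypic component of $\mathfrak{g}$, it is orthogonal to every nontrivial $\mathfrak{s}$-submodule; hence $(\cdot,\cdot)$ restricts to a form on $\mathfrak{g}^{\mathfrak{s}}$ that is nondegenerate modulo $\mathfrak{Z}(\mathfrak{g})\subseteq\mathfrak{g}(0)$. By Lemma~\ref{pairing}(i) we have $(\mathfrak{g}^{\mathfrak{s}}(i),\mathfrak{g}^{\mathfrak{s}}(j))=0$ unless $i+j=0$; since $\mathfrak{g}^{\mathfrak{s}}(-j)=0$ for $j>0$, each $\mathfrak{g}^{\mathfrak{s}}(j)$ with $j>0$ lies in the radical of the restricted form and therefore vanishes, its only possible surviving contribution being the center, which sits in degree $0$. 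Thus $\mathfrak{g}^{\mathfrak{s}}=\mathfrak{g}^{\mathfrak{s}}(0)\subseteq\mathfrak{g}(0)$, and the even statement $\mathfrak{g}_{\bar0}^{\mathfrak{s}}\subseteq\mathfrak{g}_{\bar0}(0)$ is the even part of this inclusion (alternatively it follows from the classical Elashvili--Kac result applied to the good grading induced on $\mathfrak{g}_{\bar0}$ via Lemma~\ref{evengood}).

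The step I expect to be the main obstacle is the homogenization of the second paragraph: producing an $\mathfrak{sl}_2$-triple for $e$ with $h\in\mathfrak{g}(0)$, equivalently showing the semisimple element $D=H-h$ may be taken in $\mathfrak{g}(0)$. This compatibility is essential, since for an arbitrary triple containing $e$ the space $\mathfrak{g}^{\mathfrak{s}}$ need not be $\mathrm{ad}\,H$-stable and the grading argument does not apply to it; the point rests on $\mathfrak{sl}_2$-theory together with the surjectivity (\ref{good4}) of $\mathrm{ad}\,e$ on $\mathfrak{g}_{+}$. A secondary technical point is the nondegeneracy of $(\cdot,\cdot)$ on $\mathfrak{g}^{\mathfrak{s}}$ when $\mathfrak{g}=\mathfrak{gl}(m|n)$ or $\mathfrak{sl}(n|n)$, which I would handle exactly as in Lemma~\ref{pairing}, namely by working modulo the center.
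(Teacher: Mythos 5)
Your proof is correct, but the decisive step is carried out by a genuinely different mechanism than in the paper. Both arguments start identically: reduce to a homogeneous triple with $h\in\mathfrak{g}'_{\bar{0}}(0)$, $f\in\mathfrak{g}'_{\bar{0}}(-2)$ (the paper relies on \cite[Lemma 1.1]{EK}, quoted right after the theorem, and on Lemma~\ref{evengood} to place $e$ in $\mathfrak{g}'_{\bar{0}}(2)$), so that $\mathfrak{g}^{\mathfrak{s}}$ is a graded subalgebra, and then use $\mathfrak{g}^{\mathfrak{s}}\subset\mathfrak{g}^{e}\subset\mathfrak{g}_{\geq}$, i.e.\ condition (\ref{good3}), to kill the negative degrees. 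For the positive degrees the paper argues through $\mathfrak{sl}_2$-representation theory on $\mathfrak{g}$: the decomposition $\mathfrak{g}=\mathfrak{g}^{f}\oplus \mathrm{Im}(\mathrm{ad}\,e)$ gives $\mathfrak{g}^{\mathfrak{s}}\cap \mathrm{Im}(\mathrm{ad}\,e)=\{0\}$, and the surjectivity condition (\ref{good4}), $\mathfrak{g}_{+}\subset \mathrm{Im}(\mathrm{ad}\,e)$, then forces $\mathfrak{g}^{\mathfrak{s}}(j)=0$ for $j\geq 1$. You instead use only the injectivity half together with the self-duality of $\mathfrak{g}^{\mathfrak{s}}$ under the invariant form: Lemma~\ref{pairing} pairs $\mathfrak{g}^{\mathfrak{s}}(j)$ only against $\mathfrak{g}^{\mathfrak{s}}(-j)=0$, and nondegeneracy of the restricted form (modulo the center, which sits in degree $0$) kills the positive part. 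The two routes are dual to one another via Lemma~\ref{equiv}, whose proof is itself the pairing argument, so the same structure is being exploited either way; your version buys the economy of never invoking (\ref{good4}) or the splitting $\mathfrak{g}=\mathfrak{g}^{f}\oplus[e,\mathfrak{g}]$, at the cost of having to justify that $(\cdot,\cdot)$ stays nondegenerate on the trivial $\mathfrak{s}$-isotypic component, which you do correctly by orthogonality of isotypic components (the appeal to the explicit centralizer theorems of Section~\ref{section2} is unnecessary for this and should be dropped, since those are only proved for $\mathfrak{gl}$ and $\mathfrak{osp}$). You are also right that the homogenization of the triple is the real pressure point; the paper passes over it quickly, and your explicit reduction via $D=H-h$ is a welcome clarification.
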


\begin{proof}By Lemma~\ref{evengood}, the induced grading of $\mathfrak{g}_{\bar{0}}$ is a good grading for $e$ and  $e \in \mathfrak{g}'_{\bar{0}}$.  Since $\mathfrak{g}'_{\bar{0}}$ is semisimple and $e$ is nilpotent, by the Jacobson-Morosov Theorem there exists an $\mathfrak{sl}_2$-triple $\mathfrak{s}=\{e,f,h\}\subset\mathfrak{g}'_{\bar{0}}$ containing $e$.
We have that $\mathfrak{g}^{\mathfrak{s}}=\oplus_{j\in\mathbb{Z}} \mathfrak{g}^{\mathfrak{s}}(j)$.
Now $\mathfrak{g}^{\mathfrak{s}}\subset\mathfrak{g}^{e}$ since $e\in\mathfrak{s}$ and $\mathfrak{g}^{e}\subset\mathfrak{g}_{\geq}$ by Lemma~\ref{equiv}, hence $\mathfrak{g}^{\mathfrak{s}}(j)=0$ for all $j\leq -1$.

Now $\mathfrak{g}$ is a finite dimensional module under the adjoint action of the $\mathfrak{sl}_2$ subalgebra generated by $\mathfrak{s}$.  By $\mathfrak{sl}_2$ representation theory,
\begin{equation*} \mathfrak{g} = \mathfrak{g}^{f} \oplus Im(\mbox{ad }e).  \eqno{(*)}\end{equation*} Since $\mathfrak{g}^{\mathfrak{s}} = \mathfrak{g}^{e}\cap \mathfrak{g}^{f}$, it follows from ($*$) that $\mathfrak{g}^{\mathfrak{s}}\cap Im(\mbox{ad }e)=\{0\}$. But then Lemma~\ref{equiv} implies that $\mathfrak{g}^{\mathfrak{s}}(j)=0$ for all $j\geq 1$. Therefore $\mathfrak{g}^{s} \subset \mathfrak{g}(0)$. The second claim follows from the fact that the induced grading of $\mathfrak{g}_{\bar{0}}$ is also a good grading for $e$.
\end{proof}

If $e\in\mathfrak{g}'_{\bar{0}}(2)$, $e\neq 0$, then  \cite[Lemma 1.1]{EK} gives the existence of $h\in\mathfrak{g}'_{\bar{0}}(0)$ and $f \in \mathfrak{g}'_{\bar{0}}(-2)$ such that
 $\mathfrak{s}=\{e,f,h\}$ is an $\mathfrak{sl}_2$-triple.

\begin{corollary}\label{cors}
Let $\mathfrak{g}$ be a basic Lie superalgebra, $\mathfrak{g}\neq \mathfrak{psl}(n|n)$, or let $\mathfrak{g}=\mathfrak{gl}(m|n)$. Let $\mathfrak{g}=\oplus_{j\in\mathbb{Z}} \mathfrak{g}(j)$ be a good $\mathbb{Z}$-grading for $e\in\mathfrak{g}_{\bar{0}}(2)$ defined by $H\in\mathfrak{g}_{\bar{0}}(0)$.  If $\mathfrak{s}=\{e,f,h\}$ is an $\mathfrak{sl}_2$-triple with $f \in \mathfrak{g}'_{\bar{0}}(-2)$ and $h\in\mathfrak{g}'_{\bar{0}}(0)$ given by \cite[Lemma 1.1]{EK}, then $z:=H-h\in \mathfrak{Z}(\mathfrak{g}^{\mathfrak{s}})_{\bar{0}}$.  In particular, if $\mathfrak{Z}(\mathfrak{g}^{\mathfrak{s}})_{\bar{0}}=\{0\}$, then the Dynkin grading is the only good grading for which $e$ is a good element.
\end{corollary}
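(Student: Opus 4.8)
The plan is to verify directly that $z=H-h$ lies in the even part of the center of $\mathfrak{g}^{\mathfrak{s}}$, using that $H$ induces the given good grading while $h$ induces the Dynkin grading, and then to read off the final clause.

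First I would note that $z$ is even, since $H\in\mathfrak{g}_{\bar{0}}(0)$ and $h\in\mathfrak{g}'_{\bar{0}}(0)$. Next I would check that $z$ centralizes the triple $\mathfrak{s}$. Because $H$ defines the grading, $\mbox{ad }H$ acts as multiplication by $j$ on $\mathfrak{g}(j)$; since $e\in\mathfrak{g}(2)$, $f\in\mathfrak{g}'_{\bar{0}}(-2)\subset\mathfrak{g}(-2)$ and $h\in\mathfrak{g}'_{\bar{0}}(0)\subset\mathfrak{g}(0)$, this gives $[H,e]=2e$, $[H,f]=-2f$ and $[H,h]=0$. Combined with the $\mathfrak{sl}_2$-relations $[h,e]=2e$, $[h,f]=-2f$ and $[h,h]=0$, we obtain $[z,e]=[z,f]=[z,h]=0$, so $z\in\mathfrak{g}^{\mathfrak{s}}$.

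The crucial step is to show that $z$ is central in $\mathfrak{g}^{\mathfrak{s}}$, and here the work is carried entirely by Theorem~\ref{commutes}, which asserts $\mathfrak{g}^{\mathfrak{s}}\subset\mathfrak{g}(0)$. Thus for any $w\in\mathfrak{g}^{\mathfrak{s}}$ we have $[H,w]=0$ (as $w\in\mathfrak{g}(0)$) and $[h,w]=0$ (as $w$ centralizes all of $\mathfrak{s}$, in particular $h$), whence $[z,w]=[H,w]-[h,w]=0$. Since $z$ is even, this yields $z\in\mathfrak{Z}(\mathfrak{g}^{\mathfrak{s}})_{\bar{0}}$.

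For the ``in particular'' statement, the hypothesis $\mathfrak{Z}(\mathfrak{g}^{\mathfrak{s}})_{\bar{0}}=\{0\}$ forces $z=0$, i.e.\ $H=h$, so the given good grading is exactly the Dynkin grading attached to $\mathfrak{s}$. Since the $\mathfrak{sl}_2$-triple containing $e$ is unique up to conjugacy \cite{Kos}, the space $\mathfrak{Z}(\mathfrak{g}^{\mathfrak{s}})_{\bar{0}}$ depends only on $e$; hence the hypothesis is a condition on $e$ alone, and it forces every good grading for $e$ to coincide with the (conjugacy class of the) Dynkin grading. I do not expect a genuine obstacle: the whole argument is a short bracket computation once Theorem~\ref{commutes} is available, and the only point requiring care is recording in the last step that the even center of $\mathfrak{g}^{\mathfrak{s}}$ is independent of the chosen good grading, so that the conclusion can be phrased as a property of $e$.
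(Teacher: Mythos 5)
Your argument is correct and is exactly the deduction the paper intends: the corollary is presented as an immediate consequence of Theorem~\ref{commutes}, and your bracket computation showing $z\in\mathfrak{g}^{\mathfrak{s}}$ together with the observation that $\mathfrak{g}^{\mathfrak{s}}\subset\mathfrak{g}(0)$ kills $[z,w]$ for all $w\in\mathfrak{g}^{\mathfrak{s}}$ is precisely the intended route. The closing remark that $\mathfrak{Z}(\mathfrak{g}^{\mathfrak{s}})_{\bar{0}}$ depends only on $e$ (via conjugacy of $\mathfrak{sl}_2$-triples) is a worthwhile clarification of the ``in particular'' clause.
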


We note that $H,h\in\mathfrak{g}_{\bar{0}}(0)$ are commuting semisimple elements, so we may choose our Cartan subalgebra to contain them. In particular, we see that all good gradings for an element $e$ can be described (up to equivalence) by adding a semisimple element $z\in\mathfrak{Z}(\mathfrak{g}^{\mathfrak{s}})_{\bar{0}}$ to the element $h$ of an $\mathfrak{sl}_2$-triple  $\mathfrak{s}=\{e,f,h\}$.\\

The proof of the following lemma is the same as for Lie algebras \cite{EK}.  Similarly, the theorem \cite[Theorem 1.4]{EK} and its corollaries can be extended to basic Lie superalgebras.

\begin{lemma}
Let $\mathfrak{g}$ be a basic Lie superalgebra and let $\Pi\subset\Delta_{\geq}$ be given by Lemma~\ref{positive}.  If $\mathfrak{g}=\oplus_{j\in\mathbb{Z}} \mathfrak{g}(j)$ is a good $\mathbb{Z}$-grading, then $\Pi=\Pi_{0}\sqcup\Pi_{1}\sqcup\Pi_{2}$ and $\Pi_{\bar{0}}=\Pi_{\bar{0},0}\sqcup\Pi_{\bar{0},1}\sqcup\Pi_{\bar{0},2}$.  In particular, all good gradings are among those defined by $\mbox{deg}(\alpha_i)=-\mbox{deg}(-\alpha_i)=0,1$, or $2,\ i=1,\ldots,n$ for some choice of simple roots $\Pi=\{\alpha_1,\ldots,\alpha_n\}$.
\end{lemma}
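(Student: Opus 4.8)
The plan is to work with the base $\Pi\subset\Delta_{\geq}$ furnished by Lemma~\ref{positive}, so that every simple root already has nonnegative degree, and then to show that no simple root can have degree $\geq 3$. Since the asserted splittings $\Pi=\Pi_{0}\sqcup\Pi_{1}\sqcup\Pi_{2}$ and $\Pi_{\bar{0}}=\Pi_{\bar{0},0}\sqcup\Pi_{\bar{0},1}\sqcup\Pi_{\bar{0},2}$ merely record that the degree of each simple root lies in $\{0,1,2\}$, the whole statement reduces to the bound $\mbox{Deg}(\alpha)\leq 2$ for $\alpha\in\Pi$. The last assertion will then follow because $\mbox{Deg}(-\alpha_{i})=-\mbox{Deg}(\alpha_{i})$ by Lemma~\ref{degree}.

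First I would fix a good element $e\in\mathfrak{g}_{\bar{0}}(2)$, which exists by the definition of a good grading, and recall from (\ref{good2})/(\ref{good4}) together with Lemma~\ref{equiv} that $\mbox{ad }e:\mathfrak{g}(j)\rightarrow\mathfrak{g}(j+2)$ is surjective for all $j\geq -1$; in particular $\mathfrak{g}(d)=[e,\mathfrak{g}(d-2)]$ whenever $d\geq 1$. Suppose for contradiction that some $\alpha\in\Pi$ has $d=\mbox{Deg}(\alpha)\geq 3$. Since $2\neq 0$ and $d-2\geq 1\neq 0$, Lemma~\ref{rootdecomposition} gives $\mathfrak{g}(2)=\bigoplus_{\mbox{Deg}(\beta)=2}\mathfrak{g}_{\beta}$, $\mathfrak{g}(d-2)=\bigoplus_{\mbox{Deg}(\gamma)=d-2}\mathfrak{g}_{\gamma}$, and $\mathfrak{g}_{\alpha}\subset\mathfrak{g}(d)$. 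Writing $e=\sum_{\mbox{Deg}(\beta)=2}e_{\beta}$, and choosing by surjectivity some $x=\sum_{\mbox{Deg}(\gamma)=d-2}x_{\gamma}\in\mathfrak{g}(d-2)$ with $[e,x]=e_{\alpha}$ for a fixed $0\neq e_{\alpha}\in\mathfrak{g}_{\alpha}$, I would expand $[e,x]=\sum_{\beta,\gamma}[e_{\beta},x_{\gamma}]$ and project onto $\mathfrak{g}_{\alpha}$.

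Because $[e_{\beta},x_{\gamma}]\in\mathfrak{g}_{\beta+\gamma}$ and the root-space decomposition is direct, the nonvanishing of the $\mathfrak{g}_{\alpha}$-component forces a pair $(\beta,\gamma)$ with $\mbox{Deg}(\beta)=2$, $\mbox{Deg}(\gamma)=d-2\geq 1$, $[e_{\beta},x_{\gamma}]\neq 0$, and $\beta+\gamma=\alpha$; in particular $\beta+\gamma=\alpha\in\Delta$ by Theorem~\ref{classical}(v). Since $\Pi\subset\Delta_{\geq}$ gives $\Delta^{+}\subset\Delta_{\geq}$ (hence $\Delta^{-}\subset\Delta_{\leq}$), any root of strictly positive degree is positive, so both $\beta$ and $\gamma$ are positive roots. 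Thus $\alpha=\beta+\gamma$ exhibits the simple root $\alpha$ as a sum of two positive roots, which is impossible by the usual height argument: every positive root is a nonnegative integral combination of $\Pi$, so $\mbox{ht}(\alpha)=1$ while $\mbox{ht}(\beta)+\mbox{ht}(\gamma)\geq 2$. This contradiction yields $d\leq 2$, and with $d\geq 0$ we obtain $\Pi=\Pi_{0}\sqcup\Pi_{1}\sqcup\Pi_{2}$.

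For the even statement I would invoke Lemma~\ref{evengood} to see that the induced grading of the reductive algebra $\mathfrak{g}_{\bar{0}}$ is good for the same $e\in\mathfrak{g}'_{\bar{0}}(2)$, choose a base $\Pi_{\bar{0}}\subset\Delta_{\bar{0},\geq}$ of the even root system (the Lie-algebra analogue of Lemma~\ref{positive}, cf.~\cite{EK}), and rerun the identical argument inside the semisimple Lie algebra $\mathfrak{g}'_{\bar{0}}$ to get $\Pi_{\bar{0}}=\Pi_{\bar{0},0}\sqcup\Pi_{\bar{0},1}\sqcup\Pi_{\bar{0},2}$; this step is exactly the classical computation of \cite{EK}. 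The main obstacle is bookkeeping around root-space homogeneity: the argument above relies on each root space lying in a single graded piece, which is precisely Lemma~\ref{rootdecomposition} and fails only for $\mathfrak{g}=\mathfrak{psl}(2|2)$. That algebra therefore has to be set aside and handled directly through the explicit list in Lemma~\ref{rempsl} (as carried out separately in Section~\ref{section6a}). I would also note that the characterization of simple roots as indecomposable positive roots, used in the contradiction, remains valid for the possibly nonreduced, odd-root-containing bases of a basic Lie superalgebra, since it rests only on the integer-valued height function.
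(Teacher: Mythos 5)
Your argument is correct and is essentially the argument the paper has in mind: the paper omits the proof and simply cites the Lie-algebra case of Elashvili--Kac, whose key step is exactly your observation that surjectivity of $\mbox{ad }e:\mathfrak{g}(d-2)\rightarrow\mathfrak{g}(d)$ would force a simple root of degree $d\geq 3$ to decompose as a sum of two positive roots of degrees $2$ and $d-2$. Your bookkeeping (homogeneity of root spaces via Lemma~\ref{rootdecomposition}, the sign of roots of positive degree from $\Delta^{+}\subset\Delta_{\geq}$, and the separate treatment of $\mathfrak{psl}(2|2)$, which is already implicitly excluded by the hypotheses of Lemma~\ref{positive}) is all sound.
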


\subsection{Richardson elements}
Let $\mathfrak{g}$ be a basic Lie superalgebra. Let $\mathfrak{p}$ be a parabolic subalgebra of $\mathfrak{g}$, with nilradical $\mathfrak{n}$. We call an even or odd element $e\in\mathfrak{n}$ a {\em Richardson element} if $[\mathfrak{p},e]=\mathfrak{n}$.
For a finite-dimensional simple Lie algebra $\mathfrak{g}$ this definition is equivalent to the standard definition. If $G$ is the adjoint group of $\mathfrak{g}$, then an element $e$ in the nilradical $\mathfrak{n}$ is called a Richardson element for the Lie algebra $\mathfrak{p}$ of the parabolic subgroup $P\subset G$ if the orbit $Pe$ is open dense in $\mathfrak{n}$ \cite{C, EK}.

Recall that $\mathfrak{g}_{\geq}$ is a parabolic subalgebra of $\mathfrak{g}$ with nilradical $\mathfrak{g}_{+}$.

\begin{lemma}
Let $\mathfrak{g}=\oplus_{j\in 2\mathbb{Z}}\mathfrak{g}(j)$ be an even $\mathbb{Z}$-grading and let $\mathfrak{g}_{\geq}$ be the corresponding parabolic subalgebra of $\mathfrak{g}$. Let $e\in\mathfrak{g}_{\bar{0}}(2)$. Then $e$ is a Richardson element for $\mathfrak{g}_{\geq}$ if and only if $e$ is good.
\end{lemma}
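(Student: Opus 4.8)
The plan is to translate both the Richardson condition and goodness into inclusions of the single subspace $\mathfrak{g}_+$ into images of $\mbox{ad }e$, and then compare these directly. First I would record that, by Lemma~\ref{positive}, the nilradical of the parabolic $\mathfrak{g}_{\geq}$ is $\mathfrak{g}_+$, so that the Richardson condition $[\mathfrak{g}_{\geq},e]=\mathfrak{n}$ reads $[\mathfrak{g}_{\geq},e]=\mathfrak{g}_+$. Since $e$ is even, $[\mathfrak{g}_{\geq},e]$ and $[e,\mathfrak{g}_{\geq}]$ span the same subspace, and because $e\in\mathfrak{g}(2)$ and the grading is even we have $[e,\mathfrak{g}_{\geq}]=\sum_{j\geq 0}[e,\mathfrak{g}(j)]\subset\bigoplus_{j\geq 2}\mathfrak{g}(j)=\mathfrak{g}_+$. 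Thus the reverse inclusion in the Richardson condition is automatic, and $e$ is Richardson if and only if $\mathfrak{g}_+\subset[e,\mathfrak{g}_{\geq}]$. On the other side, by Lemma~\ref{equiv} goodness of $e$ is equivalent to condition (\ref{good4}), namely $\mathfrak{g}_+\subset[e,\mathfrak{g}]$.

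With these reformulations in hand, the direction ``Richardson $\Rightarrow$ good'' is immediate: from $\mathfrak{g}_{\geq}\subset\mathfrak{g}$ we get $[e,\mathfrak{g}_{\geq}]\subset[e,\mathfrak{g}]$, so $\mathfrak{g}_+\subset[e,\mathfrak{g}_{\geq}]$ forces $\mathfrak{g}_+\subset[e,\mathfrak{g}]$, and Lemma~\ref{equiv} gives that $e$ is good.

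The substance is in ``good $\Rightarrow$ Richardson,'' where I would argue by homogeneous degrees. Suppose $\mathfrak{g}_+\subset[e,\mathfrak{g}]$ and let $x\in\mathfrak{g}_+$ be homogeneous of degree $k$; since the grading is even, $k\geq 2$. Choose $y\in\mathfrak{g}$ with $x=[e,y]$ and decompose $y=\sum_j y_j$ with $y_j\in\mathfrak{g}(j)$. As $\mbox{ad }e$ raises degree by $2$, comparing the degree-$k$ components gives $x=[e,y_{k-2}]$ with $y_{k-2}\in\mathfrak{g}(k-2)$. Here $k-2\geq 0$, so $y_{k-2}\in\mathfrak{g}_{\geq}$ and hence $x\in[e,\mathfrak{g}_{\geq}]$. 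Running this over a homogeneous basis of $\mathfrak{g}_+$ yields $\mathfrak{g}_+\subset[e,\mathfrak{g}_{\geq}]$, i.e. $e$ is Richardson.

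The only delicate point --- and the one place the hypothesis that the grading is \emph{even} is essential --- is precisely this last extraction of a preimage lying in $\mathfrak{g}_{\geq}$. For a general grading, an element of degree $1$ in $\mathfrak{g}_+$ could only be hit from $\mathfrak{g}(-1)\subset\mathfrak{g}_{-}$, and the argument would break; evenness guarantees that $\mathfrak{g}_+$ is concentrated in degrees $\geq 2$, so the homogeneous preimage always sits in degree $\geq 0$. I expect no further obstacle, as everything else is a matter of matching graded components and invoking Lemma~\ref{equiv}.
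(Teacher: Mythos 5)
Your proposal is correct and follows essentially the same route as the paper: both reduce goodness to $\mathfrak{g}_{+}\subset[e,\mathfrak{g}]$ via Lemma~\ref{equiv} and then observe that, because the grading is even and $e\in\mathfrak{g}(2)$, the part of $[e,\mathfrak{g}]$ coming from $\mathfrak{g}_{-}$ lands in $\mathfrak{g}_{\leq}$, so only $[e,\mathfrak{g}_{\geq}]$ can contribute to $\mathfrak{g}_{+}$. Your degree-by-degree extraction of a homogeneous preimage in $\mathfrak{g}(k-2)$ with $k-2\geq 0$ is just a more explicit form of the paper's one-line observation $[\mathfrak{g}_{-},e]\subset\mathfrak{g}_{\leq}$.
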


\begin{proof}
Since  $e\in\mathfrak{g}(2)$ and the grading is even, $[\mathfrak{g}_{-},e]\subset\mathfrak{g}_{\leq}$. Clearly,  $[\mathfrak{g}_{\geq},e]\subset\mathfrak{g}_{+}$. By Lemma~\ref{equiv}, the grading is good for $e$ if and only if $\mathfrak{g}_{+}\subset [\mathfrak{g},e]$.  Hence, the grading is good for $e$ if and only if $[\mathfrak{g}_{\geq},e]=\mathfrak{g}_{+}$.
\end{proof}

It is important to note that a parabolic subalgebra of a basic Lie superalgebra does not necessarily have a Richardson element.  In particular, a Borel subalgebra of $\mathfrak{sl}(m|n)$ for $m\neq n\pm 1$ does not have a Richardson element.

\subsection{Extending good $\mathbb{Z}$-gradings of $\mathfrak{g}_{\bar{0}}$}
Given a basic Lie superalgebra $\mathfrak{g}$, it is natural to ask: which good $\mathbb{Z}$-gradings of $\mathfrak{g}_{\bar{0}}$ {\em extend} to good $\mathbb{Z}$-gradings of $\mathfrak{g}$, and to what extent is such an extension determined by the $\mathbb{Z}$-grading of $\mathfrak{g}_{\bar{0}}$.

The following lemma is easy to prove.

\begin{lemma}\label{induced}
Let $\mathfrak{g}$ be a Lie superalgebra.  If $H,H'\in\mathfrak{g}_{\bar{0}}$ are such that $\mbox{ad }H$ and $\mbox{ad }H'$ define $\mathbb{Z}$-gradings of $\mathfrak{g}$ for which the induced gradings of $\mathfrak{g}_{\bar{0}}$ coincide, then $H-H' \in\mathfrak{Z}(\mathfrak{g}_{\bar{0}})$.
\end{lemma}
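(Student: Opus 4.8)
The plan is to translate the hypothesis directly into a statement about eigenvalues and then read off the conclusion, so no structure theory of $\mathfrak{g}$ is needed. First I would recall that the assertion ``$\mbox{ad }H$ defines the $\mathbb{Z}$-grading $\mathfrak{g}=\oplus_{j\in\mathbb{Z}}\mathfrak{g}(j)$'' means precisely that $\mathfrak{g}(j)$ is the $j$-eigenspace of $\mbox{ad }H$, i.e. $[H,x]=jx$ for $x\in\mathfrak{g}(j)$; likewise the grading defined by $\mbox{ad }H'$ has $j$-eigenspace $\mathfrak{g}'(j)$. The induced grading of $\mathfrak{g}_{\bar{0}}$ in each case is $\mathfrak{g}_{\bar{0}}(j)=\mathfrak{g}_{\bar{0}}\cap\mathfrak{g}(j)$, respectively $\mathfrak{g}_{\bar{0}}'(j)=\mathfrak{g}_{\bar{0}}\cap\mathfrak{g}'(j)$, and these are exactly the $j$-eigenspaces of $\mbox{ad }H$, respectively $\mbox{ad }H'$, restricted to $\mathfrak{g}_{\bar{0}}$.

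The key step is the observation that the hypothesis $\mathfrak{g}_{\bar{0}}(j)=\mathfrak{g}_{\bar{0}}'(j)$ for every $j$ forces both $\mbox{ad }H$ and $\mbox{ad }H'$ to act as the \emph{same} scalar $j$ on this common subspace. Hence $\mbox{ad}(H-H')$ vanishes identically on each graded piece $\mathfrak{g}_{\bar{0}}(j)$. Since $\mathfrak{g}_{\bar{0}}=\oplus_{j\in\mathbb{Z}}\mathfrak{g}_{\bar{0}}(j)$, linearity then gives $\mbox{ad}(H-H')|_{\mathfrak{g}_{\bar{0}}}=0$, that is, $[H-H',x]=0$ for all $x\in\mathfrak{g}_{\bar{0}}$. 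As $H,H'\in\mathfrak{g}_{\bar{0}}$, the difference $H-H'$ itself lies in $\mathfrak{g}_{\bar{0}}$ and commutes with all of $\mathfrak{g}_{\bar{0}}$, so $H-H'\in\mathfrak{Z}(\mathfrak{g}_{\bar{0}})$, as required.

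There is essentially no obstacle here; the only point worth flagging is that the grading index is literally the $\mbox{ad}$-eigenvalue, so once the two induced decompositions of $\mathfrak{g}_{\bar{0}}$ agree component by component, the two operators agree on $\mathfrak{g}_{\bar{0}}$ with no reindexing or rescaling possible. In particular I would not invoke semisimplicity of $H$ or $H'$, nor any property special to basic Lie superalgebras; the argument is valid for an arbitrary Lie superalgebra, which is why the paper records it as easy.
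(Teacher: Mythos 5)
Your proof is correct and is exactly the argument the paper has in mind (the paper omits it, remarking only that the lemma "is easy to prove"): since a grading defined by $\mbox{ad }H$ has $[H,x]=jx$ on $\mathfrak{g}(j)$, the coincidence of the induced decompositions $\mathfrak{g}_{\bar{0}}=\oplus_j\mathfrak{g}_{\bar{0}}(j)$ forces $\mbox{ad}(H-H')$ to vanish on each summand and hence on all of $\mathfrak{g}_{\bar{0}}$, and $H-H'\in\mathfrak{g}_{\bar{0}}$ gives $H-H'\in\mathfrak{Z}(\mathfrak{g}_{\bar{0}})$. No gaps; your remark that neither semisimplicity nor any special structure of $\mathfrak{g}$ is needed is also accurate.
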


The following lemma is a corollary of Lemma~\ref{induced}.

\begin{lemma}\label{extend}
Let $\mathfrak{g}$ be a basic Lie superalgebra such that $\mathfrak{Z}(\mathfrak{g}_{\bar{0}})=0$ and all derivations of $\mathfrak{g}$ are inner, i.e. $\mathfrak{osp}(m|2n) : m\neq 2n+2$, $F(4)$, $G(3)$, $D(2,1,\alpha)$. Then a $\mathbb{Z}$-grading of $\mathfrak{g}_{\bar{0}}$ has a unique extension to a $\mathbb{Z}$-grading of $\mathfrak{g}$.
 \end{lemma}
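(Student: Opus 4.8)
The plan is to deduce the lemma directly from Lemma~\ref{induced}. Recall from the discussion of inner derivations that the linear map $\phi$ sending a homogeneous element of $\mathfrak{g}(j)$ to $j$ times itself is a parity-preserving derivation of $\mathfrak{g}$. By hypothesis every derivation of $\mathfrak{g}$ is inner, so $\phi=\mathrm{ad}\,H$ for some $H$, and preservation of parity forces $H\in\mathfrak{g}_{\bar 0}$. Thus every $\mathbb{Z}$-grading of $\mathfrak{g}$ is defined by an element $H\in\mathfrak{g}_{\bar 0}$, and the grading it induces on $\mathfrak{g}_{\bar 0}$ is exactly the one defined by the restriction $\mathrm{ad}\,H|_{\mathfrak{g}_{\bar 0}}$.

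With this in place, uniqueness of the extension is immediate. Suppose $\mathrm{ad}\,H$ and $\mathrm{ad}\,H'$ define two $\mathbb{Z}$-gradings of $\mathfrak{g}$ whose induced gradings on $\mathfrak{g}_{\bar 0}$ both agree with the given one. Then $\mathrm{ad}\,H$ and $\mathrm{ad}\,H'$ induce the same grading on $\mathfrak{g}_{\bar 0}$, so Lemma~\ref{induced} gives $H-H'\in\mathfrak{Z}(\mathfrak{g}_{\bar 0})$; since $\mathfrak{Z}(\mathfrak{g}_{\bar 0})=0$ by hypothesis, we get $H=H'$ and the two gradings of $\mathfrak{g}$ coincide. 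Hence there is at most one extension, and this is the heart of the statement as a corollary of Lemma~\ref{induced}.

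For the existence of the extension I would start from the given grading of $\mathfrak{g}_{\bar 0}$. Because $\mathfrak{Z}(\mathfrak{g}_{\bar 0})=0$, the algebra $\mathfrak{g}_{\bar 0}$ is semisimple, so (its derivations again being inner) this grading is defined by a unique semisimple element $H_0\in\mathfrak{g}_{\bar 0}$; by the first paragraph the only candidate for an extension is the decomposition of $\mathfrak{g}$ into $\mathrm{ad}\,H_0$-eigenspaces. Since $\mathfrak{g}_{\bar 1}$ is a completely reducible $\mathfrak{g}_{\bar 0}$-module, $H_0$ acts semisimply on $\mathfrak{g}_{\bar 1}$, so after fixing a Cartan subalgebra $\mathfrak{h}\ni H_0$ and passing to the root space decomposition one obtains an eigenspace decomposition compatible with the bracket. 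The remaining point, which I expect to be the main obstacle, is integrality: the even root values $\alpha(H_0)$ are integers by hypothesis, and one must deduce the same for the odd root values $\beta(H_0)$ governing the action on $\mathfrak{g}_{\bar 1}$. This forces one to relate each odd root to the even roots inside the root lattice of $\mathfrak{g}$ and to invoke the explicit root systems of $\mathfrak{osp}(m|2n)$, $F(4)$, $G(3)$ and $D(2,1,\alpha)$; it is precisely here, and only here, that the particular list of algebras in the hypothesis must be used, whereas the uniqueness half is a purely formal consequence of Lemma~\ref{induced}.
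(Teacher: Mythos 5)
Your uniqueness argument is exactly the paper's: the paper offers no written proof beyond calling the lemma a corollary of Lemma~\ref{induced}, and the intended deduction is precisely what you write --- every $\mathbb{Z}$-grading of $\mathfrak{g}$ is defined by some $H\in\mathfrak{g}_{\bar 0}$ because all derivations are inner and the grading derivation preserves parity, and then Lemma~\ref{induced} together with $\mathfrak{Z}(\mathfrak{g}_{\bar 0})=0$ forces $H=H'$. That half is complete and correct.

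The existence half, however, is not merely ``the main obstacle'' left to a case check: it is false, and the integrality failure you anticipate actually occurs for the algebras in the hypothesis. Take $\mathfrak{g}=\mathfrak{osp}(1|2)$ (or $\mathfrak{osp}(3|2)$), with even roots $\pm 2\delta$ and odd roots $\pm\delta$. The $\mathbb{Z}$-grading of $\mathfrak{g}_{\bar 0}=\mathfrak{sp}(2)$ with $\mathrm{Deg}(2\delta)=1$ is defined by an $H_0$ with $\delta(H_0)=\tfrac12$, so the odd root spaces would have to sit in degree $\pm\tfrac12$ and no extension to a $\mathbb{Z}$-grading of $\mathfrak{g}$ exists. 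The same happens for $F(4)$: gradings with $\mathrm{Deg}=0$ on $\mathfrak{so}(7)$ and odd degree on the root of $\mathfrak{sl}(2)$ give the odd roots $\tfrac12(\pm\varepsilon_1\pm\varepsilon_2\pm\varepsilon_3\pm\delta)$ half-integer values. So the lemma can only be read as asserting that an extension, \emph{if it exists}, is unique --- which is consistent with how the paper invokes it in the proof of Theorem~\ref{thmexc}, where existence is supplied separately by Lemma~\ref{Dynkin} for Dynkin gradings. You should therefore drop the third paragraph (or replace it by the observation that existence can fail) rather than try to complete it; attempting to prove existence by relating odd roots to the even root lattice is a step that would fail.
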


\begin{remark}
A good $\mathbb{Z}$-grading of $\mathfrak{g}_{\bar{0}}$ need not have a good extension to $\mathfrak{g}$. The main theorem will provide counterexamples. See Example~\ref{expyr2}.
\end{remark}

\begin{lemma}\label{Dynkin}
A Dynkin grading of $\mathfrak{g}_{\bar{0}}$ has an extension to a Dynkin grading of $\mathfrak{g}$.
\end{lemma}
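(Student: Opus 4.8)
The plan is to take the very same $\mathfrak{sl}_2$-triple that defines the Dynkin grading of $\mathfrak{g}_{\bar 0}$ and observe that its semisimple element already grades all of $\mathfrak{g}$. More precisely, suppose the given Dynkin grading of $\mathfrak{g}_{\bar 0}$ is defined by $\mathrm{ad}\, h$ for an $\mathfrak{sl}_2$-triple $\mathfrak{s} = \{e,f,h\} \subset \mathfrak{g}'_{\bar 0}$ with $[e,f]=h$, $[h,e]=2e$, $[h,f]=-2f$. Since $\mathfrak{s} \subset \mathfrak{g}_{\bar 0}$, the adjoint action of $\mathfrak{s}$ is even and therefore preserves parity, so $\mathfrak{g}_{\bar 0}$ and $\mathfrak{g}_{\bar 1}$ are each finite-dimensional modules for the $\mathfrak{sl}_2$ subalgebra spanned by $\mathfrak{s}$; hence so is $\mathfrak{g}=\mathfrak{g}_{\bar 0}\oplus\mathfrak{g}_{\bar 1}$.

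First I would argue that $\mathrm{ad}\, h$ acts semisimply on $\mathfrak{g}$ with integer eigenvalues. Semisimplicity on $\mathfrak{g}_{\bar 1}$ follows because $h$ is semisimple in the reductive Lie algebra $\mathfrak{g}_{\bar 0}$ and the action of $\mathfrak{g}_{\bar 0}$ on $\mathfrak{g}_{\bar 1}$ is completely reducible, while on $\mathfrak{g}_{\bar 0}$ it is precisely the Dynkin grading we started with. That the eigenvalues are integers is exactly the statement that $h$, the Cartan element of a standard $\mathfrak{sl}_2$-triple, acts with integer weights on any finite-dimensional $\mathfrak{sl}_2$-module; this is the basic weight theory of $\mathfrak{sl}_2$, applied here to the module $\mathfrak{g}$.

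Next I would set $\mathfrak{g}(j):=\{x\in\mathfrak{g}\mid [h,x]=jx\}$ for $j\in\mathbb{Z}$ and check that this is a $\mathbb{Z}$-grading. The decomposition $\mathfrak{g}=\oplus_{j\in\mathbb{Z}}\mathfrak{g}(j)$ holds because $\mathrm{ad}\, h$ is diagonalizable with integer eigenvalues, and $[\mathfrak{g}(i),\mathfrak{g}(j)]\subset\mathfrak{g}(i+j)$ follows from the Jacobi identity, since for homogeneous $x\in\mathfrak{g}(i)$, $y\in\mathfrak{g}(j)$ one has $[h,[x,y]]=[[h,x],y]+[x,[h,y]]=(i+j)[x,y]$. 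By construction this grading is defined by $\mathrm{ad}\, h$ for the $\mathfrak{sl}_2$-triple $\mathfrak{s}$, so it is a Dynkin grading of $\mathfrak{g}$, and its induced grading on $\mathfrak{g}_{\bar 0}$ is again $\mathrm{ad}\, h$ restricted to $\mathfrak{g}_{\bar 0}$, i.e. the Dynkin grading of $\mathfrak{g}_{\bar 0}$ we began with. Thus the desired extension is the Dynkin grading of $\mathfrak{g}$ determined by $\mathfrak{s}$.

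There is essentially no obstacle here: the argument is forced once one notices that the triple defining the grading of $\mathfrak{g}_{\bar 0}$ lives inside $\mathfrak{g}_{\bar 0}\subset\mathfrak{g}$, so it acts on the odd part automatically. The only point requiring any care is the integrality of the eigenvalues of $\mathrm{ad}\, h$ on $\mathfrak{g}_{\bar 1}$, and this is immediate from $\mathfrak{sl}_2$ representation theory; one should merely record that $\mathfrak{g}_{\bar 1}$ is a genuine $\mathfrak{sl}_2$-module, not just a $\mathfrak{g}_{\bar 0}$-module, which holds precisely because $\mathfrak{s}$ is even.
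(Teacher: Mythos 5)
Your proof is correct and follows essentially the same route as the paper: both take the $\mathfrak{sl}_2$-triple $\mathfrak{s}=\{e,f,h\}$ defining the Dynkin grading of $\mathfrak{g}_{\bar 0}$, view $\mathfrak{g}$ as a finite-dimensional module over the $\mathfrak{sl}_2$ generated by $\mathfrak{s}$, and conclude that $\mathrm{ad}\,h$ defines the desired Dynkin grading of $\mathfrak{g}$. You simply spell out the integrality and semisimplicity of the $\mathrm{ad}\,h$-eigenvalues on $\mathfrak{g}_{\bar 1}$, which the paper leaves to standard $\mathfrak{sl}_2$ theory.
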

\begin{proof}
Let $\mathfrak{g}_{\bar{0}}=\sum_{j\in\mathbb{Z}}\mathfrak{g}_{\bar{0}}(j)$ be a Dynkin grading defined by $\mbox{ad }h$ with good element $e\in\mathfrak{g}_{\bar{0}}(2)$ such that $\mathfrak{s}=\{e,f,h\}\subset\mathfrak{g}_{\bar{0}}$ is an $\mathfrak{sl}_{2}$-triple. Then $\mathfrak{g}$ is a finite dimensional module under the adjoint action of the $\mathfrak{sl}(2)$ subalgebra generated by $\mathfrak{s}$.  Hence it decomposes into a direct sum of irreducible modules.  The action $\mbox{ad }h$ defines a $\mathbb{Z}$-grading of $\mathfrak{g}$ for which $e$ is a good element.
\end{proof}

\begin{remark}
In the case that $\mathfrak{Z}(\mathfrak{g}_{\bar{0}})\neq 0$ it is possible that a good $\mathbb{Z}$-grading of $\mathfrak{g}_{\bar{0}}$ has more then one extension to a good $\mathbb{Z}$-grading of $\mathfrak{g}$.  See Example~\ref{expyr1}.
\end{remark}

\section{Good $\mathbb{Z}$-gradings for the exceptional basic Lie superalgebras}\label{section5}

\begin{theorem}\label{thmexc}
All good $\mathbb{Z}$-gradings of the exceptional Lie superalgebras $F(4)$, $G(3)$ and $D(2,1,\alpha)$ are Dynkin gradings.
\end{theorem}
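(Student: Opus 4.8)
The plan is to deduce everything from Corollary~\ref{cors}. Suppose $\mathfrak{g}=\oplus_{j}\mathfrak{g}(j)$ is a good $\mathbb{Z}$-grading with good element $e\in\mathfrak{g}_{\bar 0}(2)$, defined by a semisimple $H\in\mathfrak{g}_{\bar 0}(0)$. Embedding $e$ in an $\mathfrak{sl}_2$-triple $\mathfrak{s}=\{e,f,h\}\subset\mathfrak{g}'_{\bar 0}$, Corollary~\ref{cors} writes $H=h+z$ with $z\in\mathfrak{Z}(\mathfrak{g}^{\mathfrak{s}})_{\bar 0}$, and $H$ is the Dynkin grading of $e$ once $z=0$. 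So it suffices to show that for every nilpotent even $e$ the only such $z$ giving a good grading is $z=0$. Since $z$ is central in $\mathfrak{g}^{\mathfrak{s}}$ it kills $\mathfrak{g}^{\mathfrak{s}}=\mathfrak{g}^{e}(0)$, and on $\mathfrak{g}^{e}=\bigoplus_{j\geq 0}\mathfrak{g}^{e}(j)$ the operator $\mathrm{ad}(h+z)$ acts on $\mathfrak{g}^{e}(j)$ with eigenvalues $j+\mu$, $\mu$ an $\mathrm{ad}(z)$-eigenvalue; by Lemma~\ref{equiv} goodness is exactly the condition that all these are $\geq 0$. The first reduction is the inclusion $\mathfrak{Z}(\mathfrak{g}^{\mathfrak{s}})_{\bar 0}\subseteq\mathfrak{Z}(\mathfrak{g}_{\bar 0}^{\mathfrak{s}})$: whenever the reductive Lie algebra $\mathfrak{g}_{\bar 0}^{\mathfrak{s}}$ is semisimple, $z=0$ forced.

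For $D(2,1,\alpha)$ and $G(3)$ this reduction already closes the argument. When $\mathfrak{g}_{\bar 0}=\mathfrak{sl}(2)\times\mathfrak{sl}(2)\times\mathfrak{sl}(2)$, the centralizer of an $\mathfrak{sl}_2$-triple inside a single $\mathfrak{sl}(2)$ is $\mathfrak{sl}(2)$ (for the zero triple) or $0$ (for a regular one); hence $\mathfrak{g}_{\bar 0}^{\mathfrak{s}}$ is a product of copies of $\mathfrak{sl}(2)$, so semisimple, and Lemma~\ref{semisimple} records the compatibility of the factors. When $\mathfrak{g}_{\bar 0}=G_2\times\mathfrak{sl}(2)$, every reductive centralizer of a nilpotent of $G_2$ is semisimple: over the orbits $0,A_1,\tilde{A}_1,G_2(a_1),G_2$ the reductive centralizers are $G_2,A_1,A_1,0,0$, none containing a central torus. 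Together with the $\mathfrak{sl}(2)$-factor (contributing $\mathfrak{sl}(2)$ or $0$), $\mathfrak{g}_{\bar 0}^{\mathfrak{s}}$ is semisimple, so $\mathfrak{Z}(\mathfrak{g}^{\mathfrak{s}})_{\bar 0}=0$ and the grading is Dynkin.

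The main obstacle is $F(4)$, where $\mathfrak{g}_{\bar 0}=\mathfrak{so}(7)\times\mathfrak{sl}(2)$ and $\mathfrak{g}_{\bar 1}=\mathrm{spin}_7\otimes\mathbb{C}^2$, because $\mathfrak{g}_{\bar 0}^{\mathfrak{s}}$ need not be semisimple. The reductive centralizer of a nilpotent of $\mathfrak{so}(7)$ with orthogonal partition $(d_1^{r_1},\dots)$ is $\prod_{d_i\text{ odd}}\mathfrak{so}(r_i)\times\prod_{d_i\text{ even}}\mathfrak{sp}(r_i)$ (this is the purely even case of the $\mathfrak{osp}$ centralizer computation in Section~\ref{section2}); it contains a central torus precisely when some odd part has multiplicity $2$, which for partitions of $7$ occurs only for $(3^2,1)$ and $(5,1^2)$, each yielding a one-dimensional centre $\mathfrak{so}(2)=\mathbb{C}z_0$. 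For every other orbit $\mathfrak{g}_{\bar 0}^{\mathfrak{s}}$ is semisimple and the preceding paragraph applies, and the $\mathfrak{sl}(2)$-factor never contributes a torus. For the two exceptional orbits the plan is to decompose $\mathrm{spin}_7$, with weights $\tfrac12(\pm\epsilon_1\pm\epsilon_2\pm\epsilon_3)$, as a module over $\mathfrak{s}\oplus\mathbb{C}z_0$ (and tensor with $\mathbb{C}^2$ in both the cases $e_{\mathfrak{sl}(2)}=0$ and $e_{\mathfrak{sl}(2)}$ regular), and to track the $z_0$-action on $\mathfrak{g}^{e}$ in order to decide exactly which shifts $z=c\,z_0$ compatible with an integral grading can be good. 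Pinning down $\mathfrak{Z}(\mathfrak{g}^{\mathfrak{s}})_{\bar 0}$ and the effect of $z_0$ on the odd part $\mathfrak{g}_{\bar 1}^{\mathfrak{s}}$ in these two families is the delicate computational heart of the proof, and is the step I expect to be hardest.
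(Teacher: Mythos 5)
Your treatment of $G(3)$ and $D(2,1,\alpha)$ is complete and correct: the inclusion $\mathfrak{Z}(\mathfrak{g}^{\mathfrak{s}})_{\bar 0}\subseteq\mathfrak{Z}(\mathfrak{g}_{\bar 0}^{\mathfrak{s}})$ is valid, and the reductive centralizers you list for $G_2$ and $\mathfrak{sl}(2)$ are all semisimple, so Corollary~\ref{cors} forces $z=0$. This is a slightly different route from the paper, which instead quotes \cite{EK} (every good grading of $G_2$ and of $\mathfrak{sl}(2)$ is Dynkin) and then uses the uniqueness of the extension of a $\mathbb{Z}$-grading from $\mathfrak{g}_{\bar 0}$ to $\mathfrak{g}$ together with Lemma~\ref{Dynkin}; the two arguments are essentially equivalent in content. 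The genuine gap is $F(4)$. For the two $\mathfrak{so}(7)$-orbits you correctly single out, with partitions $(3,3,1)$ and $(5,1^2)$, where $\mathfrak{Z}(\mathfrak{g}_{\bar 0}^{\mathfrak{s}})=\mathbb{C}z_0$ is a one-dimensional torus, you only announce a plan --- decompose $\mathrm{spin}_7\otimes\mathbb{C}^2$ over $\mathfrak{s}\oplus\mathbb{C}z_0$ and decide which integral shifts $z=cz_0$ remain good --- and explicitly defer the computation. Since these are precisely the orbits where a non-Dynkin good grading could arise, the theorem is not proved without that step.

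For comparison, the paper handles $F(4)$ by asserting (citing \cite{EK}) that only $(3,3,1)$ admits non-Dynkin good gradings of $\mathfrak{so}(7)$ and then excluding those by a parity argument on $\mathrm{Deg}(\theta)$. Your inclusion of $(5,1^2)$ is not spurious and is in fact a point where you are more careful than the paper: an odd part equal to $1$ with multiplicity $2$ permits shifts of the two singleton boxes far beyond $\pm1$ (compare case (ii) of the paper's own Theorem~\ref{thmosp} specialized to the purely even situation), and indeed $h_{(5,1^2)}+cz_0$ with $|c|\leq 4$ keeps all eigenvalues on $\mathfrak{so}(7)^{e}$ nonnegative --- e.g.\ $c=2$ gives the even characteristic $[2,0,2]$, a non-Dynkin good grading of $\mathfrak{so}(7)$. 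Moreover the even values $c=\pm2,\pm4$ pass the integrality test on the odd roots $\tfrac12(\pm\epsilon_1\pm\epsilon_2\pm\epsilon_3\pm\delta)$, so no parity argument on $\mathrm{Deg}(\theta)$ can dispose of them; one must actually compute the $(h+cz_0)$-eigenvalues on $\mathfrak{g}_{\bar 1}^{e}$, whose $z_0$-weights are $\pm\tfrac12$. A first weight count ($\mathfrak{g}_{\bar 1}^{e}$ sitting in degrees $4,4,2,2$ for the Dynkin grading) does not obviously force $c=0$, so the computation you defer is not a routine verification: it is exactly where the statement will be confirmed or refuted, and you must carry it out before the proof can be considered complete.
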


\begin{proof}
Let $\mathfrak{g}$ be one of the exceptional basic Lie superalgebras, $F(4)$, $G(3)$ or $D(2,1,\alpha)$.  We see from Table 1 that the center of $\mathfrak{g}_{\bar{0}}$ is trivial. Let $\mathfrak{g}=\oplus_{j\in\mathbb{Z}}\mathfrak{g}(j)$ be a good $\mathbb{Z}$-grading with good element $e\in\mathfrak{g}(2)$.  The induced grading of each simple ideal of $\mathfrak{g}_{\bar{0}}$ is a good grading for $e$ by Lemma~\ref{evengood} and Lemma~\ref{semisimple}.  The grading of $\mathfrak{g}_{\bar{0}}$ is a Dynkin grading if and only if the induced grading of each simple ideal is a Dynkin grading.
All derivations of $\mathfrak{g}$ are inner \cite{K77, S}. Since $\mathfrak{Z}(\mathfrak{g}_{\bar{0}})=0$, an extension of a $\mathbb{Z}$-grading of $\mathfrak{g}_{\bar{0}}$ to a $\mathbb{Z}$-grading of $\mathfrak{g}$ is unique, by Lemma~\ref{extend}.  A Dynkin grading of $\mathfrak{g}_{\bar{0}}$ has an extension to a Dynkin grading of $\mathfrak{g}$.  Hence, if the induced grading of $\mathfrak{g}_{\bar{0}}$ is a Dynkin grading then the $\mathbb{Z}$-grading of $\mathfrak{g}$ is also Dynkin.
If $\mathfrak{g}=G(3)$ then $\mathfrak{g}_{\bar{0}}=G_2\times \mathfrak{sl}(2)$.  If $\mathfrak{g}=D(2,1,\alpha)$ then $\mathfrak{g}_{\bar{0}}=\mathfrak{sl}(2)\times \mathfrak{sl}(2) \times \mathfrak{sl}(2)$.
It was shown in \cite{EK} that every good $\mathbb{Z}$-grading of $G_{2}$ and of $\mathfrak{sl}(2)$ is a Dynkin grading. Hence, all good $\mathbb{Z}$-gradings of $G(3)$ and of $D(2,1,\alpha)$ are Dynkin gradings.

If $\mathfrak{g}=F(4)$, then $\mathfrak{g}_{\bar{0}}=\mathfrak{so}(7)\times \mathfrak{sl}(2)$.  By \cite{EK}, the only non-Dynkin gradings of $\mathfrak{so}(7)$ correspond to the nilpotent element with partition $(3,3,1)$.
The induced grading of $\mathfrak{sl}(2)$ is a good Dynkin grading.
By Lemma~$\ref{CSA}$, there exists a Cartan subalgebra $\mathfrak{h}\subset\mathfrak{g}_{\bar{0}}(0)$.  By Lemma~\ref{rootdecomposition}, the root space decomposition is compatible with the $\mathbb{Z}$-grading.  We fix the following set of simple roots for $F(4)$.

\xymatrixrowsep{.1pc}
$$\xymatrix{  \O  \AW[r]^{-1}_{-1} &  \O \AW[r]^{-1}_{-2}  &  \O \AW[r]^{-1}_{1}  &  \OX \\
\alpha_1 & \alpha_2 & \alpha_3 & \alpha_4}$$
\xymatrixrowsep{1.5pc}

Then $\{\alpha_1,\alpha_2,\alpha_3\}$ is a set of simple roots for the simple ideal isomorphic to $\mathfrak{so}(7)$.
The highest root $\theta=\alpha_1+2\alpha_2+3\alpha_3+2\alpha_4$ is a root for the simple ideal isomorphic to $\mathfrak{sl}(2)$.  This implies that $\mbox{Deg}(\theta) = \pm 2$.  The nilpotent element of $\mathfrak{so}(7)$ corresponding to the partition $(3,3,1)$ is (up to conjugacy) $e_1=X_1+X_2$ where $X_1\in\mathfrak{g}_{\alpha_1}$, $X_2\in\mathfrak{g}_{\alpha_2} $.  The Dynkin grading for $e_1$ is $[\mbox{deg}(\alpha_1),\mbox{deg}(\alpha_2),\mbox{deg}(\alpha_3)]$
=$[2,2,-2]$, and the non-Dynkin gradings are $[2,2,-1]$ and $[2,2,-3]$.  For the non-Dynkin gradings, we have that $\mbox{Deg}(\theta)=3+2\mbox{Deg}(\alpha_4)$ and $\mbox{Deg}(\theta)=-3+2\mbox{Deg}(\alpha_4)$, respectively.  Since $\mbox{Deg}(\alpha_4)\in\mathbb{Z}$ this implies $\mbox{Deg}(\theta)$ is odd, which is impossible since $\mbox{Deg}(\theta) =\pm 2$.
\end{proof}

\section{Good $\mathbb{Z}$-gradings for $\mathfrak{psl}(2|2)$}\label{section6a}

We adopt the notation of Lemma~\ref{rempsl}.  The following lemma can proven by explicitly computing $\mathfrak{g}^{e}$.
\begin{lemma} The $\mathbb{Z}$-grading of $\mathfrak{psl}(2|2)$ defined by $m\in\mathbb{Z}$, $p,q\in\{0,2\}$ and $(a:b), (c:d)\in\mathbb{P}^{2}$ satisfying $(a:b) \neq (c:d)$ is a good grading for the element $e=rE_{12}+sE_{34}$ if and only if $p=0 \Leftrightarrow r=0$, $q=0 \Leftrightarrow s=0$ and $0\leq m\leq p+q$.
\end{lemma}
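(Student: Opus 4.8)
The plan is to reduce the good-grading condition to a condition on the degrees occurring in the centralizer $\mathfrak{g}^{e}$, and then to read off those degrees from an explicit computation. Since $\mathfrak{psl}(2|2)$ is a basic Lie superalgebra with $\mathfrak{Z}(\mathfrak{g}_{\bar{0}})=0$, Lemma~\ref{equiv} applies, so conditions (\ref{good1})--(\ref{good4}) are equivalent; in particular, by (\ref{good3}), the grading is good for $e$ precisely when $e\in\mathfrak{g}_{\bar{0}}(2)$ and $\mathfrak{g}^{e}\subset\mathfrak{g}_{\geq}$, that is, when no nonzero homogeneous centralizing element has negative degree. I would first observe that the requirement $e\in\mathfrak{g}_{\bar{0}}(2)$ already forces part of the claim: since $\mathrm{Deg}(E_{12})=p$ and $\mathrm{Deg}(E_{34})=q$ by Lemma~\ref{rempsl}, homogeneity of $e=rE_{12}+sE_{34}$ in degree $2$ requires $p=2$ whenever $r\neq 0$ and $q=2$ whenever $s\neq 0$.

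Next I would compute $\mathfrak{g}^{e}$ directly in the matrix model. For the even part, writing $\mathfrak{g}_{\bar{0}}=\mathfrak{sl}(2)\times\mathfrak{sl}(2)$ with $e=(rE_{12},sE_{34})$, the element $e$ is a regular nilpotent in each factor whose coefficient is nonzero and is zero in the other factor; hence $\mathfrak{g}_{\bar{0}}^{e}$ is spanned by $E_{12}$ and $E_{34}$ together with the entire $\mathfrak{sl}(2)$-factor on which $e$ vanishes. This step supplies the two remaining implications of the biconditionals: if $r=0$ then $E_{21}$ centralizes $e$ and has degree $-p$, so goodness forces $p=0$, and symmetrically $s=0$ forces $q=0$. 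For the odd part I would compute $\mathrm{ad}(e)$ on the eight odd matrix units $E_{13},\dots,E_{42}$ and solve $[e,X]=0$, producing an explicit basis of $\mathfrak{g}_{\bar{1}}^{e}$.

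Finally I would split $\mathfrak{g}^{e}$ into homogeneous components using the degree assignments of Lemma~\ref{rempsl} and impose that every component have nonnegative degree. The surviving inequalities on $m$ (given $p,q\in\{0,2\}$ and the constraints already obtained) are what produce the bound $0\leq m\leq p+q$, and combining this with the first two paragraphs yields the stated biconditional characterization. The degenerate cases $r=0$, $s=0$, and $r=s=0$ should be checked separately to confirm that the endpoints of the range are attained.

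I expect the main obstacle to be the homogeneous decomposition of $\mathfrak{g}_{\bar{1}}^{e}$. In contrast to a generic basic Lie superalgebra, $\mathfrak{psl}(2|2)$ has two-dimensional odd root spaces (Theorem~\ref{classical}(vi)) and admits non-inner $\mathbb{Z}$-gradings; the parameters $(a:b),(c:d)$ precisely record how each such root space is split into two lines of different degree. As a result $\mathfrak{g}_{\bar{1}}^{e}$ need not be a sum of full root spaces, so its graded pieces cannot simply be read off but must be extracted by hand and matched against the eight degrees listed in Lemma~\ref{rempsl}. Verifying that these degrees depend only on $m,p,q$, and not on $(a:b),(c:d)$, is the crux that makes the final answer independent of the projective parameters.
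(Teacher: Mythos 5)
Your overall strategy coincides with the paper's (which offers no details beyond ``can be proven by explicitly computing $\mathfrak{g}^{e}$''), and the reduction via Lemma~\ref{equiv} together with your even-part analysis is correct. The gap is that you defer exactly the computation on which everything hinges --- the homogeneous decomposition of $\mathfrak{g}_{\bar{1}}^{e}$ --- and simply assert that the surviving inequalities are $0\leq m\leq p+q$. If you carry it out in the case $r\neq 0$ and $s\neq 0$ (so $p=q=2$), that is not what you find. One computes $\mathfrak{g}_{\bar{1}}^{e}=\langle E_{14},E_{32}\rangle\oplus\langle rE_{13}+sE_{24},\,sE_{31}+rE_{42}\rangle$. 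The first summand is spanned by the homogeneous elements $aE_{14}+bE_{32}$ (degree $m$) and $cE_{14}+dE_{32}$ (degree $4-m$), which indeed give $0\leq m\leq 4$. But the second summand is spanned by the homogeneous elements $r(cE_{13}-dE_{42})-s(dE_{31}-cE_{24})$, of degree $2-m$, and $r(aE_{13}-bE_{42})-s(bE_{31}-aE_{24})$, of degree $m-2$; both lie in $\mathfrak{g}^{e}$ (since $[e,cE_{13}-dE_{42}]=-s(cE_{14}+dE_{32})$ and $[e,dE_{31}-cE_{24}]=-r(cE_{14}+dE_{32})$, and similarly for the other pair) and both are nonzero for every admissible $(a:b)\neq(c:d)$. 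Requiring $\mathfrak{g}^{e}\subset\mathfrak{g}_{\geq}$ therefore forces $2-m\geq 0$ and $m-2\geq 0$, i.e.\ $m=2$, not $0\leq m\leq 4$.

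A concrete instance: take $r=s=1$, $p=q=2$, $m=1$, $(a:b)=(1:0)$, $(c:d)=(0:1)$. Then $E_{13}+E_{24}=(aE_{13}-bE_{42})-(bE_{31}-aE_{24})$ is homogeneous of degree $m-2=-1$, while $[e,E_{13}+E_{24}]=-E_{14}+E_{14}=0$, so $\mathrm{ad}\,e$ fails to be injective on $\mathfrak{g}(-1)$ even though $0\leq 1\leq 4$. (This is consistent with Theorem~\ref{commutes}: $\mathfrak{g}^{\mathfrak{s}}=\langle E_{13}+E_{24},E_{31}+E_{42}\rangle$ must lie in $\mathfrak{g}(0)$ for any good grading, which for $p=q=2$ happens only when $m=2$.) The cases $s=0$ (where $\mathfrak{g}_{\bar{1}}^{e}$ is the sum of the two full root spaces $\langle E_{14},E_{32}\rangle$ and $\langle E_{13},E_{42}\rangle$, contributing only the degrees $m$ and $2-m$) and $r=s=0$ do come out as $0\leq m\leq p+q$, as you predict. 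So the final step of your plan cannot produce the stated bound when $rs\neq 0$: either your anticipated outcome of the computation is wrong, or the statement itself must be corrected to $m=2$ in that case; in any event the proof as outlined does not close.
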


\section{Good $\mathbb{Z}$-gradings for $\mathfrak{gl}(m|n)$}\label{section6}
In this section we classify the good $\mathbb{Z}$-gradings of $\mathfrak{g}=\mathfrak{gl}(m|n)$. The good $\mathbb{Z}$-gradings of $\mathfrak{sl}(m|n) : m\neq n$ and $\mathfrak{psl}(n|n): n\neq 2$ are uniquely induced from good $\mathbb{Z}$-gradings of $\mathfrak{g}=\mathfrak{gl}(m|n)$ since   $\mathfrak{Z}(\mathfrak{g}_{\bar{0}})\subset\mathfrak{g}(0)$.  See Lemma~\ref{indu}). To describe these gradings we generalize the definition of a pyramid given in \cite{BG, EK} to the Lie superalgebra $\mathfrak{gl}(m|n)$.

A pyramid $P$ is a finite collection of boxes of size $2\times 2$ in the upper half plane which are centered at integer coordinates, such that for each $j=1,\ldots,N$, the second coordinates of the $j^{th}$ row equal $2j-1$ and the first coordinates of the $j^{th}$ row form an arithmetic progression  $f_j,f_j+2,\ldots,l_j$ with difference $2$, such that the first row is centered at $(0,0)$, i.e.  $f_1=-l_1$, and
\begin{equation}\label{eqpyramid} f_{j}\leq f_{j+1}\leq l_{j+1}\leq l_{j} \hspace{.5cm} \text{for all } j.\end{equation}
Each box of $P$ has even or odd parity.  We say that $P$ has {\em size} $(m|n)$ if $P$ has exactly $m$ even boxes and $n$ odd boxes.

Fix $m,n\in\mathbb{Z}_{+}$ and let $(p,q)$ be a partition of $(m|n)$. Let $r=\psi(p,q)\in Par(m+n)$ be the total ordering of the partitions $p$ and $q$ which satisfies: if $p_i=q_j$ for some $i,j$ then $\psi(p_i)< \psi(q_j)$. We define $Pyr(p,q)$ to be the set of pyramids which satisfy the following two conditions:
(1) the $j^{th}$ row of a pyramid $P\in Pyr(p,q)$ has length $r_j$;
(2) if $\psi^{-1}(r_j)\in p$ (resp.  $\psi^{-1}(r_j)\in q$) then all boxes in the $j^{th}$ row have even (resp. odd parity) and we mark these boxes with a ``$+$''  (resp. ``$-$'' sign).

Corresponding to each pyramid $P\in Pyr(p,q)$ we define a nilpotent element $e(P)\in\mathfrak{g}_{\bar{0}}$ and semisimple element $h(P)\in\mathfrak{g}_{\bar{0}}$, as follows.  Recall $\mathfrak{gl}(m|n)=\mbox{End}(V_{0}\oplus V_{1})$.  Fix a basis $\{v_1,\ldots,v_m\}$ of $V_{0}$ and $\{v_{m+1},\ldots,v_{m+n}\}$ of $V_{1}$.
Label the even (resp. odd) boxes of $P$ by the basis vectors of $V_{0}$ (resp. $V_{1}$). Define an endomorphism $e(P)$ of $V_{0}\oplus V_{1}$ as acting along the rows of the pyramid, i.e. by sending a basis vector $v_i$ to the basis vector which labels the box to the right of the box labeled by $v_i$ or to zero if it has no right neighbor.   Then $e(P)$ is nilpotent and corresponds to the partition $(p,q)$.  Since $e(P)$ does not depend the choice of $P$ in $Pyr(p,q)$, we may denote it by $e_{p,q}$.  Moreover, $e_{p,q}\in\mathfrak{g}_{\bar{0}}$ because boxes in the same row have the same parity.

Define $h(P)$ to be the $(m+n)$-diagonal matrix where the $i^{th}$ diagonal entry is the first coordinate of the box labeled by the basis vector $v_{i}$.  Then $h(P)$ defines a $\mathbb{Z}$-grading of $\mathfrak{g}$ for which $e_{p,q}\in\mathfrak{g}(2)$.  Let $P_{p,q}$ denote the symmetric pyramid from $Pyr(p,q)$. Then $h(P_{p,q})$ defines a Dynkin grading for $e_{p,q}$, and $P_{p,q}$ is called the {\em Dynkin pyramid} for the partition $(p|q)$.

\begin{example}\label{expyr1}
Let $\mathfrak{g}=\mathfrak{gl}(4|6)$ and consider the partitions $p=(3,1)$ and $q=(4,2)$.  The Dynkin grading of $\mathfrak{g}_{\bar{0}}=\mathfrak{gl}(4)\times\mathfrak{gl}(6)$ for the partition $(p,q)$ corresponds to the following symmetric pyramids.

\setlength{\unitlength}{.4cm}
\begin{center}
\begin{picture}(5,2)(0,0)
\linethickness{1pt}
\put(0,0){\line(1,0){3}}
\put(0,1){\line(1,0){3}}
\put(1,2){\line(1,0){1}}
\put(0,0){\line(0,1){1}}
\put(1,0){\line(0,1){2}}
\put(2,0){\line(0,1){2}}
\put(3,0){\line(0,1){1}}
\put(0.15,0.25){$+$}
\put(1.15,0.25){$+$}
\put(2.15,0.25){$+$}
\put(1.15,1.25){$+$}
\end{picture}
\begin{picture}(6,2)(0,0)
\linethickness{1pt}
\put(0,0){\line(1,0){4}}
\put(0,1){\line(1,0){4}}
\put(1,2){\line(1,0){2}}
\put(0,0){\line(0,1){1}}
\put(1,0){\line(0,1){2}}
\put(2,0){\line(0,1){2}}
\put(3,0){\line(0,1){2}}
\put(4,0){\line(0,1){1}}
\put(0.15,0.25){$-$}
\put(1.15,0.25){$-$}
\put(2.15,0.25){$-$}
\put(3.15,0.25){$-$}
\put(1.15,1.25){$-$}
\put(2.15,1.25){$-$}
\end{picture}
\end{center}

There are pyramids in $Pyr(p,q)$ for which the induced grading of $\mathfrak{g}_{\bar{0}}$ is the one given above, and these correspond to good $\mathbb{Z}$-gradings. They are represented by the following pyramids:\\
\begin{center}
\begin{picture}(6,4)(0,0)
\linethickness{1pt}
\put(0,0){\line(1,0){4}}
\put(0,1){\line(1,0){4}}
\put(0,2){\line(1,0){3}}
\put(1,3){\line(1,0){2}}
\put(1,4){\line(1,0){1}}
\put(0,0){\line(0,1){2}}
\put(1,0){\line(0,1){4}}
\put(2,0){\line(0,1){4}}
\put(3,0){\line(0,1){3}}
\put(4,0){\line(0,1){1}}
\put(0.15,1.25){$+$}
\put(1.15,1.25){$+$}
\put(1.15,3.25){$+$}
\put(2.15,1.25){$+$}
\put(0.15,0.25){$-$}
\put(1.15,0.25){$-$}
\put(2.15,0.25){$-$}
\put(3.15,0.25){$-$}
\put(1.15,2.25){$-$}
\put(2.15,2.25){$-$}
\end{picture}
\begin{picture}(6,4)(0,0)
\linethickness{1pt}
\put(0,0){\line(1,0){4}}
\put(0,1){\line(1,0){4}}
\put(.5,2){\line(1,0){3}}
\put(1,3){\line(1,0){2}}
\put(1.5,4){\line(1,0){1}}
\put(0,0){\line(0,1){1}}
\put(1,0){\line(0,1){1}}
\put(2,0){\line(0,1){1}}
\put(3,0){\line(0,1){1}}
\put(4,0){\line(0,1){1}}
\put(.5,1){\line(0,1){1}}
\put(1.5,1){\line(0,1){1}}
\put(2.5,1){\line(0,1){1}}
\put(3.5,1){\line(0,1){1}}
\put(1,2){\line(0,1){1}}
\put(2,2){\line(0,1){1}}
\put(3,2){\line(0,1){1}}
\put(1.5,3){\line(0,1){1}}
\put(2.5,3){\line(0,1){1}}
\put(0.65,1.25){$+$}
\put(1.65,1.25){$+$}
\put(2.65,1.25){$+$}
\put(1.65,3.25){$+$}
\put(0.15,0.25){$-$}
\put(1.15,0.25){$-$}
\put(2.15,0.25){$-$}
\put(3.15,0.25){$-$}
\put(1.15,2.25){$-$}
\put(2.15,2.25){$-$}
\end{picture}
\begin{picture}(6,4)(0,0)
\linethickness{1pt}
\put(0,0){\line(1,0){4}}
\put(0,1){\line(1,0){4}}
\put(1,2){\line(1,0){3}}
\put(1,3){\line(1,0){2}}
\put(2,4){\line(1,0){1}}
\put(0,0){\line(0,1){1}}
\put(1,0){\line(0,1){3}}
\put(2,0){\line(0,1){4}}
\put(3,0){\line(0,1){4}}
\put(4,0){\line(0,1){2}}
\put(1.15,1.25){$+$}
\put(2.15,1.25){$+$}
\put(3.15,1.25){$+$}
\put(2.15,3.25){$+$}
\put(0.15,0.25){$-$}
\put(1.15,0.25){$-$}
\put(2.15,0.25){$-$}
\put(3.15,0.25){$-$}
\put(1.15,2.25){$-$}
\put(2.15,2.25){$-$}
\end{picture}
\end{center}
\end{example}

\begin{theorem}\label{thmgl}  Let  $\mathfrak{g}=\mathfrak{gl}(m|n)$,
and let $(p,q)$ be a partition of $(m|n)$. If  $P$ is a pyramid from $Pyr(p,q)$, then the pair $(h(P),e_{p,q})$ is good. Moreover, every good grading for $e_{p,q}$ is of the form $(h(P),e_{p,q})$ for some pyramid $P\in Pyr(p,q)$.
\end{theorem}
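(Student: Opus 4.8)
The plan is to combine the description of $\mathfrak{g}^{\mathfrak{s}}$ from Section~\ref{section2} with the explicit centralizer basis in (\ref{evenz})--(\ref{oddz}), reducing the whole statement to a degree computation that matches the pyramid inequality (\ref{eqpyramid}). First I would fix the $\mathfrak{sl}_2$-triple $\mathfrak{s}=\{e,f,h\}$ with $e=e_{p,q}$ and $h=h(P_{p,q})$ the Dynkin element. By Corollary~\ref{cors}, every good grading for $e$ is defined by $H=h+z$ for some $z\in\mathfrak{Z}(\mathfrak{g}^{\mathfrak{s}})_{\bar 0}$. From the isomorphism $\mathfrak{g}^{\mathfrak{s}}\iso\mathfrak{gl}(m_1,n_1)\times\cdots\times\mathfrak{gl}(m_N,n_N)$ one reads off that $\mathfrak{Z}(\mathfrak{g}^{\mathfrak{s}})_{\bar 0}=\bigoplus_j\mathbb{C}P_j$, where $P_j$ is the projection onto the sum of the $\mathfrak{sl}_2$-strings of length $r_j$. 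Writing $z=\sum_j c_j P_j$, the element $z$ acts by the single scalar $c_j$ on every string of length $r_j$, so $H$ shifts the $h$-weights of each such string uniformly by $c_j$. This is exactly a horizontal translation by $c_j$ of the rows of length $r_j$ of $P_{p,q}$, so $H=h(P)$ for a configuration $P$ obtained from $P_{p,q}$ by shifting rows. The requirement that $H$ define a $\mathbb{Z}$-grading forces $c_j-c_{j'}\in\mathbb{Z}$; after normalizing $c_1=0$ (longest row centered at $0$) all box coordinates become integers, and equal-length strings automatically receive the same shift, hence align as (\ref{eqpyramid}) demands for rows of equal length.

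Next I would compute the degrees. Let $x_i$ denote the first coordinate of the leftmost box of the row carrying the string generated by $v_i$ (resp.\ $w_i$), so that $\mbox{ad}\,h(P)$ acts on $e^t v_i$ with eigenvalue $x_i+2t$. A typical basis element of $\mathfrak{g}^e$ from (\ref{evenz})--(\ref{oddz}) sends the generator of a string of length $a$ to $e^k$ times the generator of a string of length $b$ (equivariantly along the strings), subject to $k\geq\max(0,b-a)$. Its $\mbox{ad}\,h(P)$-degree is therefore $2k+x_j-x_i$, where $i,j$ index the source and target strings. By Lemma~\ref{equiv} and the non-negativity criterion for $\mathfrak{g}^e$, the grading $h(P)$ is good for $e$ if and only if all of these degrees are $\geq 0$.

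I would then match these inequalities with (\ref{eqpyramid}). If the target is no longer than the source ($b\leq a$), the extremal case $k=0$ gives $x_j-x_i\geq 0$, i.e.\ the shorter (or equal) row lies to the right of the longer one; if the target is strictly longer ($b>a$), the extremal case $k=b-a$ gives $2(b-a)+x_j-x_i\geq 0$, which rearranges, using $l=f+2(\mathrm{length}-1)$, to $l_{\mathrm{source}}\leq l_{\mathrm{target}}$. These are precisely the nesting inequalities $f_j\leq f_{j+1}$ and $l_{j+1}\leq l_j$ of (\ref{eqpyramid}), and conversely (\ref{eqpyramid}) makes every degree non-negative. This settles both assertions simultaneously: for $P\in Pyr(p,q)$ the pyramid inequalities make all degrees $\geq 0$, so $(h(P),e_{p,q})$ is good; and for an arbitrary good grading the criterion forces the shifted configuration to satisfy (\ref{eqpyramid}), hence to be a genuine pyramid in $Pyr(p,q)$.

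The main obstacle is the bookkeeping in the middle step: one must check that the degree formula $2k+x_j-x_i$ and the range $k\geq\max(0,b-a)$ hold uniformly across the four blocks of (\ref{evenz})--(\ref{oddz}) (even-to-even, odd-to-odd, and the two mixed blocks), and that letting $k$ take its extremal value recovers exactly the two families of inequalities in (\ref{eqpyramid}) with no gaps. Once this dictionary between minimal-degree centralizer elements and adjacent-row nesting is in place, together with the integrality of the shifts and the automatic alignment of equal-length rows established above, both directions of the theorem follow at once.
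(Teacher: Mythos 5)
Your proposal is correct and follows essentially the same route as the paper: the forward direction checks nonnegativity of the $\mathrm{ad}\,h(P)$-eigenvalues on $\mathfrak{g}^{e}$, and the converse uses Corollary~\ref{cors} together with the structure of $\mathfrak{Z}(\mathfrak{g}^{\mathfrak{s}})_{\bar{0}}$ to write every good grading as $h(P_{p,q})+z$ and then shows the nonnegativity constraints are exactly (\ref{eqpyramid}). The only difference is that the paper delegates the degree bookkeeping to \cite[Figures 1--3]{EK} via Lemma~\ref{glproof}, whereas you carry it out directly from (\ref{evenz})--(\ref{oddz}); your dictionary $2k+x_j-x_i\geq 0$ at $k=\max(0,b-a)$ correctly reproduces both families of inequalities in (\ref{eqpyramid}).
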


\begin{proof}
By Lemma~\ref{glproof}, this can be proven using the same method as for $\mathfrak{gl}(m+n)$ given in \cite{EK}.  It is easy to see from \cite[Figures 1-3]{EK} that if $e=e_{p,q}$ and $P\in Pyr(p,q)$, then the eigenvalues of $\mbox{ad }h(P)$ on $\mathfrak{g}^{e}$ are nonnegative.   Conversely, a good $\mathbb{Z}$-grading for $e_{p,q}$ is defined by the eigenvalues of  $\mbox{ad}((h(P_{p,q})+z)$ where  $z\in\mathfrak{Z}(\mathfrak{g}^{\mathfrak{s}})_{\bar{0}}$ is a diagonal matrix with integer entries and $\mathfrak{s}=\{e_{p,q},h(P_{p,q}),f\}$ is an $\mathfrak{sl}_{2}$-triple (see Section~\ref{sectionbasic}).  It is easy to see from \cite[Figures 1-3]{EK} that the condition $z\in\mathfrak{Z}(\mathfrak{g}^{\mathfrak{s}})_{\bar{0}}$ implies that the diagonal entries of $z$ must be constant along each row of the pyramid and equal on rows of the same length. Moreover, condition (\ref{eqpyramid})  must be satisfied in order for the eigenvalues of $\mbox{ad}(h(P_{p,q})+z)$ on $\mathfrak{g}^{e}$ to be nonnegative. So $h(P_{p,q})+z=h(P)$ for some pyramid $P\in Pyr(p,q)$.
\end{proof}

\begin{example}\label{expyr2}
Let $\mathfrak{g}=\mathfrak{gl}(4|6)$ and consider the partitions $p=(3,1)$ and $q=(4,2)$.
The following pyramids represent a good $\mathbb{Z}$-grading of $\mathfrak{g}_{\bar{0}}$ for which
there is no good $\mathbb{Z}$-grading of $\mathfrak{g}$ with this induced good $\mathbb{Z}$-grading of $\mathfrak{g}_{\bar{0}}$.\\
\setlength{\unitlength}{.4cm}
\begin{center}
\begin{picture}(5,2)(0,0)
\linethickness{1pt}
\put(0,0){\line(1,0){3}}
\put(0,1){\line(1,0){3}}
\put(0,2){\line(1,0){1}}
\put(0,0){\line(0,1){2}}
\put(1,0){\line(0,1){2}}
\put(2,0){\line(0,1){1}}
\put(3,0){\line(0,1){1}}
\put(0.15,0.25){$+$}
\put(1.15,0.25){$+$}
\put(2.15,0.25){$+$}
\put(0.15,1.25){$+$}
\end{picture}
\begin{picture}(6,2)(0,0)
\linethickness{1pt}
\put(0,0){\line(1,0){4}}
\put(0,1){\line(1,0){4}}
\put(2,2){\line(1,0){2}}
\put(0,0){\line(0,1){1}}
\put(1,0){\line(0,1){1}}
\put(2,0){\line(0,1){2}}
\put(3,0){\line(0,1){2}}
\put(4,0){\line(0,1){2}}
\put(0.15,0.25){$-$}
\put(1.15,0.25){$-$}
\put(2.15,0.25){$-$}
\put(3.15,0.25){$-$}
\put(2.15,1.25){$-$}
\put(3.15,1.25){$-$}
\end{picture}
\end{center}
\end{example}

\section{Good $\mathbb{Z}$-gradings for $\mathfrak{osp}(m|2n)$}\label{section7}
In this section we classify good $\mathbb{Z}$-gradings for $\mathfrak{g}=\mathfrak{osp}(m,2n)$. Recall that $\mathfrak{g}_{\bar{0}}=\mathfrak{so}(m)\times\mathfrak{sp}(2n)$. To describe these gradings we define an orthosymplectic pyramid, generalizing the definition of orthogonal and symplectic pyramids as defined in \cite{BG, EK}.

Given a partition  $p$, we let $J_p=\{p_1>\cdots >p_{a}\}$ be the set of distinct nonzero parts of $p$.
We write $p=(p_1^{m_{p_1}},\ldots,p_{a}^{m_{p_{a}}})$, where $m_{p_i}$ is the multiplicity of $p_i$ in $p$.  A partition is called orthogonal (resp. symplectic) if $m_{p_i}$ is even for even (resp. odd) $p_i$.
We say that a partition $(p|q)$ of $(m|2n)$ is orthosymplectic if $p$ is an orthogonal partition of $m$ and $q$ is a symplectic partition of $2n$.

Let $(p|q)$ be an orthosymplectic partition of $(m|2n)$.  Let $r\in Par(m+2n)$ be the total ordering of the partitions $p$ and $q$. Let $J_r=\{r_1>\cdots >r_{b}\}$ be the set of distinct nonzero parts of $r$.  Write $r=(r_1^{m_{1}+n_{1}},\ldots,r_{b}^{m_{b}+n_b})$ where $p=(r_1^{m_{1}},\ldots,r_{b}^{m_b})$ and $q=(r_1^{n_{1}},\ldots,r_{b}^{n_b})$.

We define the {\em orthosymplectic Dynkin pyramid} for $(p|q)$ as follows. It is a finite collection of boxes of size $2\times 2$ in the plane centered at integer coordinates: $(i,2j)$ for $m$ odd and $(i,2j-1)$ for $m$ even. It is centrally symmetric about $(0,0)$.  We describe how to place the boxes in the upper half plane. The boxes in lower half plane are obtained by the central symmetry.

If $m$ is even, then the zeroth row is empty.  If $m$ is odd, let $r_k$ be the largest part of $p$ occurring with odd multiplicity. Put $r_k$ boxes in the zeroth row in the columns $1-r_k,3-r_k,\ldots,r_k-1$, and remove one part of $r_k$ from the partition.  Now $p$ has an even number of parts occurring with odd multiplicity. Denote these by $c_1 >d_1 >\cdots > c_N > d_N$.

We add boxes inductively to the next row in the upper half plane as follows.  Let $r_j$ be the largest part remaining in the partition $r$.  If $m_j$ is odd, then $r_j=c_i$ for some $i$.  We add an ``even skew-row''  of length $\frac{c_i+d_i}{2}$ of even parity boxes in the columns $1-d_i,3-d_i,\ldots,c_i-1$, and then remove $c_i$ and $d_i$ from the partition.  Next we add $\lfloor\frac{m_j}{2}\rfloor$ rows of length $r_j$ of even parity boxes  in the columns $1-r_j,3-r_j,\ldots,r_j-1$. If $n_j$ is odd, we then add an ``odd skew-row'' of length $\frac{r_j}{2}$ of odd parity boxes  in the columns $1,\ldots,r_j-1$. Finally we add $\lfloor\frac{n_j}{2}\rfloor$ rows  of length $r_j$ of odd parity boxes in the columns $1-r_j,3-r_j,\ldots,r_j-1$, and remove $r_j^{m_{j}+n_{j}}$ from the partition.  We label the even boxes with the symbol ``$+$'' and the odd boxes with the symbol ``$-$''.

\begin{example}
$\mathfrak{osp}(9|6)$. The pyramids for the partitions $(5,3,1 | 3,3)$, $(4,4,1 | 6)$, $(7,1,1 |4,2 )$ are:\\
\setlength{\unitlength}{.4cm}
\begin{center}
\begin{picture}(7,7)(0,0)
\linethickness{1pt}
\put(1,0){\line(1,0){3}}
\put(1,1){\line(1,0){3}}
\put(0,2){\line(1,0){5}}
\put(0,3){\line(1,0){5}}
\put(1,4){\line(1,0){3}}
\put(1,5){\line(1,0){3}}
\put(0,2){\line(0,1){1}}
\put(1,0){\line(0,1){3}}
\put(1,4){\line(0,1){1}}
\put(2,0){\line(0,1){5}}
\put(3,0){\line(0,1){5}}
\put(4,0){\line(0,1){1}}
\put(4,2){\line(0,1){3}}
\put(5,2){\line(0,1){1}}
\put(1.25,0.3){$-$}
\put(2.25,0.3){$-$}
\put(3.25,0.3){$-$}
\put(1.25,1.3){$+$}
\put(2.25,1.3){$+$}
\put(0.25,2.3){$+$}
\put(1.25,2.3){$+$}
\put(2.25,2.3){$+$}
\put(3.25,2.3){$+$}
\put(4.25,2.3){$+$}
\put(2.25,3.3){$+$}
\put(3.25,3.3){$+$}
\put(1.25,4.3){$-$}
\put(2.25,4.3){$-$}
\put(3.25,4.3){$-$}
\end{picture}
\begin{picture}(8,7)(0,0)
\linethickness{1pt}
\put(1,0){\line(1,0){4}}
\put(0,1){\line(1,0){5}}
\put(0,2){\line(1,0){3.5}}
\put(2.5,3){\line(1,0){3.5}}
\put(1,4){\line(1,0){5}}
\put(1,5){\line(1,0){4}}
\put(0,1){\line(0,1){1}}
\put(1,0){\line(0,1){2}}
\put(1,4){\line(0,1){1}}
\put(2,0){\line(0,1){2}}
\put(2,4){\line(0,1){1}}
\put(2.5,2){\line(0,1){1}}
\put(3,0){\line(0,1){2}}
\put(3,3){\line(0,1){2}}
\put(3.5,2){\line(0,1){1}}
\put(4,0){\line(0,1){1}}
\put(4,3){\line(0,1){2}}
\put(5,0){\line(0,1){1}}
\put(5,3){\line(0,1){2}}
\put(6,3){\line(0,1){1}}
\put(1.25,0.3){$+$}
\put(2.25,0.3){$+$}
\put(3.25,0.3){$+$}
\put(4.25,0.3){$+$}
\put(0.25,1.3){$-$}
\put(1.25,1.3){$-$}
\put(2.25,1.3){$-$}
\put(2.75,2.3){$+$}
\put(3.25,3.3){$-$}
\put(4.25,3.3){$-$}
\put(5.25,3.3){$-$}
\put(1.25,4.3){$+$}
\put(2.25,4.3){$+$}
\put(3.25,4.3){$+$}
\put(4.25,4.3){$+$}
\end{picture}
\begin{picture}(10,7)(0,0)
\linethickness{1pt}
\put(3,0){\line(1,0){1}}
\put(2.5,1){\line(1,0){1.5}}
\put(1.5,2){\line(1,0){2}}
\put(0,3){\line(1,0){7}}
\put(0,4){\line(1,0){7}}
\put(3.5,5){\line(1,0){2}}
\put(3,6){\line(1,0){1.5}}
\put(3,7){\line(1,0){1}}
\put(0,3){\line(0,1){1}}
\put(1,3){\line(0,1){1}}
\put(1.5,2){\line(0,1){1}}
\put(2,3){\line(0,1){1}}
\put(2.5,1){\line(0,1){2}}
\put(3,0){\line(0,1){1}}
\put(3,3){\line(0,1){1}}
\put(3,6){\line(0,1){1}}
\put(3.5,1){\line(0,1){2}}
\put(3.5,4){\line(0,1){2}}
\put(4,0){\line(0,1){1}}
\put(4,3){\line(0,1){1}}
\put(4,6){\line(0,1){1}}
\put(4.5,4){\line(0,1){2}}
\put(5,3){\line(0,1){1}}
\put(5.5,4){\line(0,1){1}}
\put(6,3){\line(0,1){1}}
\put(7,3){\line(0,1){1}}
\put(3.25,0.3){$+$}
\put(2.75,1.3){$-$}
\put(1.75,2.3){$-$}
\put(2.75,2.3){$-$}
\put(0.25,3.3){$+$}
\put(1.25,3.3){$+$}
\put(2.25,3.3){$+$}
\put(3.25,3.3){$+$}
\put(4.25,3.3){$+$}
\put(5.25,3.3){$+$}
\put(6.25,3.3){$+$}
\put(3.75,4.3){$-$}
\put(4.75,4.3){$-$}
\put(3.75,5.3){$-$}
\put(3.25,6.3){$+$}
\end{picture}
\end{center}
\end{example}

We define a nilpotent element $e_{p,q}\in\mathfrak{g}_{\bar{0}}$ and semisimple element $h_{p,q}\in\mathfrak{g}_{\bar{0}}$ as in \cite{BG}. Let $\varphi$ be a non-degenerate supersymmetric bilinear form on $V=V_{0}\oplus V_{1}$, so that $V_{0}$ and $V_{1}$ are orthogonal and the restriction to $V_0$ is symmetric while the restriction to $V_{1}$ is skew-symmetric.
Let $k=\lfloor \frac{m}{2}\rfloor$. We take the standard basis $\{v_0,v_1,\ldots,v_k,v_{-1},\ldots,v_{-k}\}$ of $V_{\bar{0}}$ and $\{v_{k+1},\ldots,v_{k+n},v_{-(k+1)},\ldots,v_{-(k+n)}\}$ of $V_{\bar{1}}$, which for  $i,j>0$ satisfies  $\varphi(v_0,v_0)=2$, $\varphi(v_0,v_{\pm j})=0$, $\varphi(v_i,v_j)=\varphi(v_{-i},v_{-j})=0$, and $\varphi(v_i,v_{-j})=\delta_{ij}$.  We omit $v_0$ if $m=2k$.

We write $E_{i,j}$ for the matrix with a $1$ in the $(i,j)$ place and zeros elsewhere.
The following matrices give a Chevalley basis for $\mathfrak{osp}(m|2n)_{\bar{0}}=\mathfrak{so}(m)\times\mathfrak{sp}(2n)$ (omitting the first set if $m=2k$)
\begin{eqnarray*}
&\{2E_{i,0}-E_{0,-i}, E_{0,i}-2E_{-i,0}\}_{1\leq i\leq k} \cup \{E_{i,-j}-E_{j,-i}, E_{-j,i}-E_{-i,j}\}_{1\leq i<j\leq k} \\
&\cup \{E_{i,j}-E_{-j,-i}\}_{1\leq i,j\leq k} \cup \{E_{i,j}-E_{-j,-i}\}_{k+1 \leq i,j \leq k+n} \\
&\cup \{E_{i,-i},E_{-i,i}\}_{k+1\leq i\leq k+n}
\cup \{E_{i,-j}+E_{j,-i}, E_{-i,j}+E_{-j,i}\}_{k+1\leq i<j\leq k+n}.
\end{eqnarray*}
Define $\sigma_{i,j}\in\{\pm 1\}$ to be the coefficient of $E_{i,j}$ of the unique element in this basis if it appears, or zero if no basis element involves $E_{i,j}$ \cite{BG}.

Label the even boxes (resp. odd boxes) in the upper half plane $x,y>0$ with the vectors $v_1,\ldots,v_k$ (resp. $v_{k+1},\ldots,v_{k+n}$). The centrally symmetric box of the box labeled with $v_i$ is labeled with $v_{-i}$. There is a box at $(0,0)$ if and only if $m$ is odd, in which case we label this box with $v_0$.

Define $e_{p,q}$  to be the matrix $\sum_{i,j} \sigma_{i,j}E_{i,j}$, where the sum is over all pairs of boxes $B_i$, $B_j$ in the orthosymplectic Dynkin pyramid satisfying one of the following:
\begin{itemize}
  \item $row(B_i) = row(B_j)$ and $col(B_i) = col(B_j) + 2$;
  \item $row(B_i) = -row(B_j)$ is an even skew-row in the upper half plane, $col(B_i) =
2, col(B_j) = 0$;
  \item $row(B_i) = -row(B_j)$ is an even skew-row in the upper half plane, $col(B_i) =
0, col(B_j) = -2$;
  \item $row(B_i) = -row(B_j)$ is an odd skew-row in the upper half plane, $col(B_i) =
1, col(B_j) = -1$,
\end{itemize}
where $row(B_i)$ (resp. $col(B_i)$) denotes the first (resp. second) coordinate of the box $B_i$.
Then $e_{p,q}$ is a nilpotent element of $\mathfrak{osp}(m|2n)$ and corresponds to the partition $(p|q)$.

Define $h_{p,q}$ to be the $(m+2n)$-diagonal matrix whose eigenvalue on the vector $v_i$ is equal to the first coordinate of the box labeled with this vector. Then $h_{p,q}$ defines a $\mathbb{Z}$-grading of $\mathfrak{g}$ for which $e_{p,q}\in\mathfrak{g}(2)$.  This is the Dynkin grading for $e_{p,q}$.

\begin{example}
$\mathfrak{osp}(6,4)$. The pyramids for the partitions $(3,3 | 4)$, $(5,1 | 2,2)$ are:\\
\setlength{\unitlength}{.75cm}
\begin{center}
\begin{picture}(6,4)(0,0)
\linethickness{1pt}
\put(.5,0){\line(1,0){3}}
\put(0,1){\line(1,0){3.5}}
\put(0,2){\line(1,0){4}}
\put(0.5,3){\line(1,0){3.5}}
\put(0.5,4){\line(1,0){3}}
\put(0,1){\line(0,1){1}}
\put(0.5,0){\line(0,1){1}}
\put(0.5,3){\line(0,1){1}}
\put(1,1){\line(0,1){1}}
\put(1.5,0){\line(0,1){1}}
\put(1.5,3){\line(0,1){1}}
\put(2,1){\line(0,1){2}}
\put(2.5,0){\line(0,1){1}}
\put(2.5,3){\line(0,1){1}}
\put(3,2){\line(0,1){1}}
\put(3.5,0){\line(0,1){1}}
\put(3.5,3){\line(0,1){1}}
\put(4,2){\line(0,1){1}}
\put(0.6,0.2){$v_{-3}$}
\put(1.6,0.2){$v_{-2}$}
\put(2.6,0.2){$v_{-1}$}
\put(0.1,1.2){$v_{-5}$}
\put(1.1,1.2){$v_{-4}$}
\put(0.7,3.2){$v_{1}$}
\put(1.7,3.2){$v_{2}$}
\put(2.7,3.2){$v_{3}$}
\put(3.2,2.2){$v_{5}$}
\put(2.2,2.2){$v_{4}$}
\put(1,0.6){$+$}
\put(2,0.6){$+$}
\put(3,0.6){$+$}
\put(0.5,1.6){$-$}
\put(1.5,1.6){$-$}
\put(1,3.6){$+$}
\put(2,3.6){$+$}
\put(3,3.6){$+$}
\put(3.5,2.6){$-$}
\put(2.5,2.6){$-$}
\end{picture}
\begin{picture}(6,4)(0,0)
\linethickness{1pt}
\put(1.5,0){\line(1,0){2}}
\put(0,1){\line(1,0){3.5}}
\put(0,2){\line(1,0){5}}
\put(1.5,3){\line(1,0){3.5}}
\put(1.5,4){\line(1,0){2}}
\put(0,1){\line(0,1){1}}
\put(1,1){\line(0,1){1}}
\put(1.5,0){\line(0,1){1}}
\put(1.5,3){\line(0,1){1}}
\put(2,1){\line(0,1){2}}
\put(2.5,0){\line(0,1){1}}
\put(2.5,3){\line(0,1){1}}
\put(3,1){\line(0,1){2}}
\put(3.5,0){\line(0,1){1}}
\put(3.5,3){\line(0,1){1}}
\put(4,2){\line(0,1){1}}
\put(5,2){\line(0,1){1}}
\put(1.6,0.2){$v_{-5}$}
\put(2.6,0.2){$v_{-4}$}
\put(0.1,1.2){$v_{-3}$}
\put(1.1,1.2){$v_{-2}$}
\put(2.1,1.2){$v_{-1}$}
\put(2.2,2.2){$v_1$}
\put(3.2,2.2){$v_2$}
\put(4.2,2.2){$v_3$}
\put(1.7,3.2){$v_4$}
\put(2.7,3.2){$v_5$}
\put(2,0.6){$-$}
\put(3,0.6){$-$}
\put(0.5,1.6){$+$}
\put(1.5,1.6){$+$}
\put(2.5,1.6){$+$}
\put(2.5,2.6){$+$}
\put(3.5,2.6){$+$}
\put(4.5,2.6){$+$}
\put(2,3.6){$-$}
\put(3,3.6){$-$}
\end{picture}
\end{center}
\end{example}

Let $$C(p):=\{p_i\in J_p \mid p_i \text{ is odd, } m_{p_i}=2 \text{ and } p_i\not\in J_q\}=\{p_1>\cdots >p_{c(p)}\}$$ and $$D(q):=\{q_j\in J_q \mid q_j \text{ is even, } m_{q_j}=2 \text{ and }q_j\not\in J_p\}=\{q_1>\cdots >q_{d(q)}\}.$$

Define the diagonal matrices $z(s_1,\ldots,s_{c(p)})\in\mathfrak{so}(m)$, with $s_i\in\mathbb{F}$, whose $i^{th}$ diagonal entry is $s_i$ if the basis vector lies in a box of $SP(p)$ in the (strictly) upper half-plane in a row corresponding to the part $p_i\in C(p)$, and is $-s_i$ if the basis vector lies in the centrally symmetric box, and all other entries are zero.
Define the diagonal matrices $z(t_1,\ldots,t_{d(q)})\in\mathfrak{sp}(2n)$, with $t_j\in\mathbb{F}$, whose $j^{th}$ diagonal entry is $t_j$ if the basis vector lies in a box of $SP(p)$ in the (strictly) upper half-plane in a row corresponding to the part $q_j\in D(q)$, and is $-t_j$ if the basis vector lies in the centrally symmetric box, and all other entries are zero.

\begin{theorem}\label{thmosp}
Let $\mathfrak{g}=\mathfrak{osp}(m|2n)$ and let $(p,q)$ be an orthosymplectic partition of $(m|n)$.

If m=2k+1, the element $h_{p,q} + (z(s_1,\ldots,s_{c(p)}),z(t_1,\ldots,t_{d(q)}))$ defines a good $\mathbb{Z}$-grading of $\mathfrak{osp}(2k+1|2n)$ for $e_{p,q}$ if and only if one of the following cases holds:
\begin{enumerate}[(i)]
  \item if $1\not\in C(p)$, then $s_i,t_j\in\{-1,0,1\}$ for $1\leq i\leq c(p)$, $1\leq j\leq d(q)$, and for each pair $p_k\in C(p)$, $q_l\in D(q)$ satisfying $p_k=q_l \pm 1$ we must have $|s_k -t_l|\leq 1$;

  \item if $1\in C(p)$, then $s_i,t_j\in\{-1,0,1\}$ for $1\leq i\leq c(p)-1$, $1\leq j\leq d(q)$, $s_{c(p)}\in\mathbb{Z}$, and for each pair $p_k\in C(p)$, $q_l\in D(q)$ satisfying $p_k=q_l \pm 1$ we must have $|s_k -t_l|\leq 1$, and
      $$|s_{c(p)}|\ \leq\  \mbox{min}\{p_{\alpha-1}-1,\ q_{\beta}-1,\ p_{c(p)-1}-|s_{c(p)-1}|-1,\ q_{d(q)}-|t_{d(q)}|-1\}.$$
\end{enumerate}

If m=2k, the element $h_{p,q} + (z(s_1,\ldots,s_{c(p)}),z(t_1,\ldots,t_{d(q)}))$ defines a good $\mathbb{Z}$-grading of $\mathfrak{osp}(2k|2n)$ for $e_{p,q}$ if and only if one of the following cases holds:
\begin{enumerate}[(i)]
  \item if $1\not\in C(p)$, and $C(p)\neq J_p$ or $D(q)\neq J_q$, then $s_i,t_j\in\{-1,0,1\}$ for $1\leq i\leq c(p)$, $1\leq j\leq d(q)$, and for each pair $p_k\in C(p)$, $q_l\in D(q)$ satisfying $p_k=q_l \pm 1$ we must have $|s_k -t_l|\leq 1$;

   \item if $1\not\in C(p)$ and $C(p)=J_p$, $D(q)=J_q$, then either all $s_i,t_j\in\{-1,0,1\}$ for $1\leq i\leq c(p)$, $1\leq j\leq d(q)$ or all $s_i,t_j\in\{-1/2,1/2\}$, and for each pair $p_k\in C(p)$, $q_l\in D(q)$ satisfying $p_k=q_l \pm 1$ we must have $|s_k -t_l|\leq 1$;

  \item if $1\in C(p)$, and $C(p)\neq J_p$ or $D(q)\neq J_q$, then $s_i,t_j\in\{-1,0,1\}$ for $1\leq i\leq c(p)-1$, $1\leq j\leq d(q)$, $s_{c(p)}\in\mathbb{Z}$, and for each pair $p_k\in C(p)$, $q_l\in D(q)$ satisfying $p_k=q_l \pm 1$ we must have $|s_k -t_l|\leq 1$, and
      $$|s_{c(p)}|\ \leq\  \mbox{min}\{p_{\alpha-1}-1,\ q_{\beta}-1,\ p_{c(p)-1}-|s_{c(p)-1}|-1,\ q_{d(q)}-|t_{d(q)}|-1\}.$$

   \item if $1\in C(p)$ and $C(p)=J_p$, $D(q)=J_q$, then either all $s_i,t_j\in\{-1,0,1\}$ for $1\leq i\leq c(p)-1$, $1\leq j\leq d(q)$, $s_{c(p)}\in\mathbb{Z}$, or all $s_i,t_j\in\{-1/2,1/2\}$ and $s_{c(p)}\in (1/2+\mathbb{Z})$, and for each pair $p_k\in C(p)$, $q_l\in D(q)$ satisfying $p_k=q_l \pm 1$ we must have $|s_k -t_l|\leq 1$, and
       $$|s_{c(p)}|\ \leq\  \mbox{min}\{p_{\alpha-1}-1,\ q_{\beta}-1,\ p_{c(p)-1}-|s_{c(p)-1}|-1,\ q_{d(q)}-|t_{d(q)}|-1\}.$$
      \end{enumerate}
\end{theorem}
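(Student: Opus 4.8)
The plan is to combine the reduction of Corollary~\ref{cors} with the explicit description of $\mathfrak{g}^{\mathfrak{s}}$ from Section~\ref{section2}, and then to read off the goodness condition from the eigenvalues of $\mathrm{ad}(h_{p,q}+z)$ on $\mathfrak{g}^e$ via the orthosymplectic pyramid, exactly as in the Lie algebra arguments of \cite{EK, BG}. First I would record that, since $\mathfrak{osp}(m|2n)$ is basic and not $\mathfrak{psl}(n|n)$, Corollary~\ref{cors} applies: every good grading for $e_{p,q}$ is defined by $\mathrm{ad}(h_{p,q}+z)$ with $z\in\mathfrak{Z}(\mathfrak{g}^{\mathfrak{s}})_{\bar 0}$ and $\mathfrak{s}=\{e_{p,q},f,h_{p,q}\}$, and by the criterion of the introduction such a grading is good precisely when all eigenvalues of $\mathrm{ad}(h_{p,q}+z)$ on $\mathfrak{g}^e$ are nonnegative. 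By the structure theorem of Section~\ref{section2}, $\mathfrak{g}^{\mathfrak{s}}$ is a product of factors $\mathfrak{osp}(m_i|2n_i)$ (over the distinct odd parts $r_i$) and $\mathfrak{osp}(d_j|2c_j)$ (over the distinct even parts $s_j$), and since the center of $\mathfrak{osp}(a|2b)_{\bar 0}=\mathfrak{so}(a)\times\mathfrak{sp}(2b)$ is one-dimensional exactly when $a=2$ and zero otherwise, $\mathfrak{Z}(\mathfrak{g}^{\mathfrak{s}})_{\bar 0}$ is spanned by the $\mathfrak{so}(2)$-central elements of those factors with $m_i=2$ (odd parts of multiplicity $2$ in $p$) and $d_j=2$ (even parts of multiplicity $2$ in $q$). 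As in the $\mathfrak{gl}$-case computation of Section~\ref{section6}, each such central element is a diagonal matrix that is constant along its row and opposite on the centrally symmetric row.

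The next step is to eliminate the central directions coming from the mixed factors, which accounts for the conditions $p_i\notin J_q$ and $q_j\notin J_p$ in the definitions of $C(p)$ and $D(q)$. If an odd part $r_i$ has $m_i=2$ but also appears in $q$ (so the factor is $\mathfrak{osp}(2|2n_i)$ with $n_i\geq 1$), its $\mathfrak{so}(2)$-central element $z$ acts on the odd part of the factor with eigenvalues $\pm s$; since this odd part lies inside $\mathfrak{g}^{\mathfrak{s}}=\mathfrak{g}^e(0)\subset\mathfrak{g}^e$, nonnegativity of the eigenvalues of $\mathrm{ad}(h_{p,q}+z)$ forces $s=0$. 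The same argument, applied to the factors $\mathfrak{osp}(d_j|2c_j)$ with $c_j\geq 1$, kills the corresponding parameters. Hence only the parts in $C(p)$ and $D(q)$ carry nonzero parameters, so $z=(z(s_1,\dots,s_{c(p)}),z(t_1,\dots,t_{d(q)}))$ has exactly the form in the statement, and it remains to determine which choices of $(s_i,t_j)$ are good.

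To obtain the inequalities I would compute the eigenvalues of $\mathrm{ad}(h_{p,q}+z)$ on $\mathfrak{g}^e=\mathfrak{g}^e_{\bar 0}\oplus\mathfrak{g}^e_{\bar 1}$ by reading them off the shifted orthosymplectic pyramid, using the explicit basis of $\mathfrak{g}^e$ from Section~\ref{sectionvarphi}. The bounds $s_i,t_j\in\{-1,0,1\}$ and the coupling $|s_k-t_l|\leq 1$ for $p_k=q_l\pm 1$ arise respectively from the even part of $\mathfrak{g}^e$ (the Elashvili--Kac constraint for a multiplicity-$2$ row paired with its centrally symmetric partner) and from the odd elements of $\mathfrak{g}^e_{\bar 1}$ joining an $\mathfrak{so}$-row of length $p_k$ to an $\mathfrak{sp}$-row of length $q_l$ differing by one. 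The special behaviour of $s_{c(p)}$ when $1\in C(p)$ reflects that a length-one row is a single centrally symmetric box carrying no internal even-reflection constraint: its shift may be any integer, bounded only by the requirement that it not force a negative eigenvalue against the adjacent rows, which yields the displayed minimum. Finally, the half-integer option $s_i,t_j\in\{-1/2,1/2\}$ appears only when $m=2k$ and $C(p)=J_p$, $D(q)=J_q$, because then the orthogonal factor is $\mathfrak{so}(2k)$ of type $D$, whose roots $\pm e_a\pm e_b$ take integer values on a half-integer diagonal; for $m=2k+1$ the short roots $\pm e_a$ of $\mathfrak{so}(2k+1)$ force integer eigenvalues, which is why no half-integer case occurs in the odd situation.

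The main obstacle is this final step: showing that the listed inequalities are simultaneously necessary and sufficient in every case, and in particular correctly handling the interplay between the integer shift $s_{c(p)}$ of the length-one row and its neighbours (the minimum bound) and the type-$D$ half-integer freedom. I would organize this as a reduction to the Lie algebra computations of \cite{EK, BG} for each $\mathfrak{so}$- and $\mathfrak{sp}$-block of $\mathfrak{g}^e_{\bar 0}$, supplemented by a direct analysis of the finitely many super-coupling terms in $\mathfrak{g}^e_{\bar 1}$ connecting rows of nearly equal length, together with a careful integrality check distinguishing types $B$ and $D$.
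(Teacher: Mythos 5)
Your proposal follows essentially the same route as the paper's proof: reduce via Corollary~\ref{cors} to gradings defined by $\mathrm{ad}(h_{p,q}+z)$ with $z\in\mathfrak{Z}(\mathfrak{g}^{\mathfrak{s}})_{\bar 0}$, import the even-part constraints from the Elashvili--Kac analysis of $\mathfrak{so}(m)\times\mathfrak{sp}(2n)$, and obtain the odd-part coupling constraints from $\mathfrak{g}_{\bar 1}\cong\mathrm{Hom}(V_0,V_1)$ exactly as in the $\mathfrak{gl}(m|n)$ case. The additional details you supply (identifying $\mathfrak{Z}(\mathfrak{g}^{\mathfrak{s}})_{\bar 0}$ from the product decomposition of $\mathfrak{g}^{\mathfrak{s}}$ into $\mathfrak{osp}$ factors, and the type $B$ versus type $D$ integrality distinction behind the half-integer cases) are correct and merely make explicit what the paper leaves implicit.
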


\begin{proof}
Let $e=e_{p,q}$, $h=h_{p,q}$ and let $\mathfrak{s}=\{e,h,f\}$ be the corresponding $\mathfrak{sl}_{2}$-triple.
As in the $\mathfrak{gl}(m|n)$ case, all good $\mathbb{Z}$-gradings for $e_{p,q}$ can be describe by the eigenvalues and eigenspaces of $\mbox{ad}(h_{p,q}+z)$ for some semisimple element $z\in\mathfrak{Z}(\mathfrak{g}^{s})$.  Recall that $\mathfrak{g}_{\bar{0}}=\mathfrak{so}(m)\times\mathfrak{sp}(2n)$.  The conditions on the diagonal matrix $z$ which imply $\mathfrak{g}_{\bar{0}}^{e}\subset(\mathfrak{g}_{\bar{0}})_{\geq}$ where determined in \cite{EK}.  So we only need to determine the additional conditions which imply  $\mathfrak{g}_{\bar{1}}^{e}\subset(\mathfrak{g}_{\bar{1}})_{\geq}$. Now $\mathfrak{g}_{\bar{1}}\cong \mbox{Hom}(V_{0},V_{1})$, so these conditions are the same as for the odd part of the $\mathfrak{gl}(m|n)$ case.  In particular, for all $i,j$ we must have $s_i-t_j \in\mathbb{Z}$ and $|s_i-t_j| \leq |p_i-q_j|$ where $s_i$ (resp. $t_j$) corresponds to the partition part $p_i$ (resp. $q_j$).
\end{proof}

\end{document}